\definecolor{darkred}{rgb}{0.5,0,0}
\definecolor{darkgreen}{rgb}{0,0.5,0}
\definecolor{darkblue}{rgb}{0,0,0.5}
\theoremstyle{plain}
\newtheorem{theorem}{Theorem}[section]
\newtheorem{lemma}[theorem]{Lemma}
\theoremstyle{remark}
\newtheorem{remark}[theorem]{Remark}
\newcommand\M{\mathcal{M}}
\renewcommand\M{\mathcal{M}}
\newcommand{\XX}{\mathcal{X}}
\newcommand{\YY}{\mathcal{Y}}
\newcommand{\ZZ}{\mathcal{Z}}
\newcommand{\R}{\mathbb{R}}
\newcommand{\C}{\mathbb{C}}
\newcommand{\cC}{\mathcal{C}}
\newcommand{\Z}{\mathbb{Z}}
\newcommand{\Q}{\mathbb{Q}}
\renewcommand{\P}{\mathbb{P}}
\newcommand{\bA}{\mathbb{A}}
\newcommand\lie[1]{\mathfrak{#1}}
\newcommand{\g}{\lie{g}}
\newcommand{\on}{\operatorname}
\newcommand{\tw}{\on{tw}}
\newcommand{\Ups}{\Upsilon}
\newcommand{\quot}{\on{quot}}
\newcommand{\dual}{\vee}
\newcommand{\Ve}{\on{Vert}}
\newcommand{\Edge}{\on{Edge}}
\newcommand{\Aut}{ \on{Aut} }
\newcommand{\Ad}{ \on{Ad} }
\newcommand{\Hom}{ \on{Hom}}
\newcommand{\Ind}{ \on{Ind}}
\newcommand{\UU}{ \mathfrak{U}}
\newcommand\dirac{/\kern-1.2ex\partial} % Dirac operator
\newcommand\qu{/\kern-.7ex/} % Categorical quotients
\newcommand\lqu{\backslash \kern-.7ex \backslash} % Categorical
\newcommand\dr{r_+ \kern-.7ex - \kern-.7ex r_-}
\newtheorem{corollary}[theorem]{Corollary}
\newtheorem{definition}[theorem]{Definition}
\newtheorem{proposition}[theorem]{Proposition}
\newtheorem{example}[theorem]{Example}
\newcommand{\labell}\label
\newcommand{\ovl}{\overline}
\newcommand\eps{\epsilon}
\newcommand{\lan}{\langle}
\newcommand{\ran}{\rangle}
\newcommand{\ti}{\tilde}
\newcommand\pt{\on{pt}}
\newcommand\age{\on{age}}
\newcommand\cF{\mathcal{F}}
\newcommand{\sss}{\on{ss}}
\newcommand\MM{\mathfrak{M}}
\newcommand\Sym{\on{Sym}}
\newcommand\ev{\on{ev}}
\newcommand\Eul{\on{Eul}}
\newcommand\mO{\mathcal{O}}
\newcommand\bra[1]{ < \kern-.7ex {#1} \kern-.7ex >} % Categorical
\newcommand\bdefn{\begin{definition}}
\newcommand\edefn{\end{definition}}
\newcommand\bea{\begin{eqnarray*}}
\newcommand\eea{\end{eqnarray*}}
\newcommand\bcv{\left[ \begin{array}{r} }
\newcommand\ecv{\end{array} \right] }
\newcommand\bma{\left[ \begin{array} }
\newcommand\ema{\end{array} \right]}
\newcommand\ben{\begin{enumerate}}
\newcommand\een{\end{enumerate}}
\newcommand\beq{\begin{equation}}
\newcommand\eeq{\end{equation}}
\newcommand\bex{\begin{example}}
\newcommand\bsj{\left\{ \begin{array}{rrr} }
\newcommand\esj{\end{array} \right\}}
\newcommand\eex{\end{example}}
\newcommand\sx{*\kern-.5ex_X}
\newcommand{\fr}{{\on{fr}}}
\def\mathunderaccent#1{\let\theaccent#1\mathpalette\putaccentunder}
\def\putaccentunder#1#2{\oalign{$#1#2$\crcr\hidewidth \vbox
to.2ex{\hbox{$#1\theaccent{}$}\vss}\hidewidth}}
\begin{document}

\title[Quantum Kirwan morphism III]{Quantum Kirwan morphism and Gromov-Witten invariants of
  quotients III}

\authors{Chris T. Woodward\thanks{Partially supported by NSF
 grant DMS0904358 and the Simons Center for Geometry and Physics}
\address Department of Mathematics \\ 
Rutgers University \\ 110 Frelinghuysen Road \\ Piscataway, NJ 08854-8019,
U.S.A. 
  \email ctw@math.rutgers.edu
}

%\received{December 11, 1905}
%\accepted{February 29, 1906}

\maketitle

\begin{abstract}  
This is the third in a sequence of papers in which we construct a
quantum version of the Kirwan map from the equivariant quantum
cohomology $QH_G(X)$ of a smooth polarized complex projective variety
$X$ with the action of a connected complex reductive group $G$ to the
orbifold quantum cohomology $QH(X \qu G)$ of its geometric invariant
theory quotient $X \qu G$, and prove that it intertwines the genus
zero gauged Gromov-Witten potential of $X$ with the genus zero
Gromov-Witten graph potential of $X \qu G$.  We also give a formula
for a solution to the quantum differential equation on $X \qu G$ in
terms of a localized gauged potential for $X$.  These results overlap
with those of Givental \cite{gi:eq}, Lian-Liu-Yau \cite{lly:mp1},
Iritani \cite{iri:gmt}, Coates-Corti-Iritani-Tseng
\cite{coates:mirrorstacks}, and Ciocan-Fontanine-Kim
\cite{ciocan:genuszerowall}, \cite{ciocan:bigI}.

\end{abstract} 

%\begin{center} Draft, comments welcome.
%\end{center} 

\tableofcontents

\setcounter{section}{6}
\setcounter{equation}{34}
\setcounter{figure}{13}

\vskip .2in

We continue with the notation in the previous part \cite{qk2}, where
we defined perfect obstruction theories and virtual fundamental
classes for the moduli stacks of stable gauged maps.  In this part we
define the resulting gauged invariants, which come in various flavors
(gauged invariants for fixed scaling, affine gauged invariants, and
invariants with varying scaling).  We show the splitting axioms for
the invariants and deduce the main results of the series: Let a
complex reductive group $G$ act on a smooth polarized projective (or
in some cases, quasiprojective) variety $X$ with only finite
stabilizers on the semistable locus, and let $\Lambda_X^G$ be the
Novikov field for $H^2_G(X,\Q)$.  We construct a quantum Kirwan map
$$\kappa_X^G: QH_G(X) \to QH_{S^1}(X \qu
G)$$ 
and prove the adiabatic limit theorem that the quantum Kirwan map
intertwines the gauged graph potential and graph potential of the git
quotient
$$\tau_X^G: QH_G(X) \to \Lambda_X^G, \quad \tau_{X \qu G}: QH_{S^1}(X \qu
G) \to \Lambda_X^G$$ 
in the limit of large area.  We end with a partial computation of the
quantum Kirwan map in the toric case, that is, when $X$ is a vector
space with a linear action of a torus $G$.

We thank I. Ciocan-Fontanine and B. Kim for pointing out a missing
circle-equivariant term in Example \ref{Ifunction}.

\section{Gauged Gromov-Witten invariants}

\label{alggw}

\subsection{Equivariant Gromov-Witten theory for smooth varieties} 

First we recall the definition equivariant Gromov-Witten invariants
for a smooth projective target using the Behrend-Fantechi machinery
\cite{bf:in}, as explained in Graber-Pandharipande \cite{gr:loc}.  We
adopt the perspective on the splitting axiom adopted in Behrend
\cite{be:gw}: Invariants are defined for any possibly disconnected
combinatorial type, and the splitting axiom can be broken down into
{\em cutting edges} and {\em collapsing edges} axiom.  In preparation
for studying the properties of the virtual fundamental classes,
suppose as in Behrend-Fantechi \cite[p. 51]{bf:in} that there is a
diagram of Deligne-Mumford stacks
$$ \begin{diagram} 
\node{\XX'} \arrow{e,t}{u} \arrow{s,l}{g} \node{\XX} \arrow{s,r}{f} \\
\node{\ZZ'} \arrow{e,t}{v} \node{\ZZ} \end{diagram} $$
where $v: \ZZ' \to \ZZ$ is a local complete intersection morphism with
finite unramified diagonal over a stack $\YY$.  Let $E \to L_\XX$ and
$F \to L_{\XX'}$ be perfect relative obstruction theories for $\XX$
and $\XX'$ over $\YY$, respectively.  A {\em compatibility datum} for
$E$ and $F$ is a triple of morphisms in $D(\mO_{\XX'})$ giving rise to
a morphism of distinguished triangles
$$ \begin{diagram} 
\node{u^*E}  \arrow{e,t}{\phi} \arrow{s} 
\node{F}  \arrow{e,t}{\psi} \arrow{s} 
\node{g^* L_{\ZZ'/\ZZ}}  
\arrow{e,t}{\chi} 
\arrow{s}  
\node{u^* E[1]} \arrow{s} 
  \\
\node{u^* L_{\XX/\YY}}  \arrow{e}
\node{L_{\XX'/\YY}}  \arrow{e} 
\node{L_{\XX'/\XX}} 
 \arrow{e} \node{u^* L_{\XX/\YY}[1]}  
 \end{diagram} 
$$ 
We say that $E,F$ are {\em compatible perfect relative obstruction
  theories} if there exists a compatibility datum.  By
\cite[7.5]{bf:in} if $E,F$ are compatible perfect relative
  obstruction theories, and $\ZZ'$ and $\ZZ$ as above are smooth then
  $ v^![\XX] = [\XX'] .$

\begin{example} \label{cutting} (Cutting an edge for stable maps)
Let $\Ups: \Gamma \to \Gamma'$ be a morphism of graphs disconnecting
an edge, that is, replacing an edge in $\Gamma$ with a pair of
semi-infinite edges in $\Gamma'$.  We have a morphism of stacks of
stable curves $\ovl{\MM}(\Ups): \ovl{\MM}_{g,n,\Gamma} \to
\ovl{\MM}_{g,n+2,\Gamma'} $ obtained by identifying the two additional
markings, and an induced isomorphism $ \ovl{\M}(\Ups):
\ovl{\M}_{g,n,\Gamma} \to \ovl{\M}_{g,n+2,\Gamma'} $, except in the
case that there exists an automorphism of a curve of combinatorial
type $\Gamma'$ interchanging the two markings, in which case it is a
double cover.

The stack of stable maps $\ovl{\M}_{g,n,\Gamma}(X)$ may be identified
(up to a possible automorphism) with the sub-stack of
$\ovl{\M}_{g,n+2,\Gamma'}(X)$ consisting of objects with $u(z_{n+1}) =
u(z_{n+2})$, where $z_{n+1},z_{n+2}$ are the new markings.  That is,
we have a Cartesian diagram
$$ \begin{diagram} \node{\ovl{\M}_{g,n,\Gamma}(X)} \arrow{e} \arrow{s}
  \node{ \ovl{\M}_{g,n+2,\Gamma'}(X)}
  \arrow{s}\\ \node{\ovl{\MM}_{g,n,\Gamma} \times X}
  \arrow{e,t}{\Delta} \node{\ovl{\MM}_{g,n+2,\Gamma'} \times X \times
    X} \end{diagram} $$
where $\Delta$ combines the identification of the moduli stacks with
the diagonal embedding of $X$.  As explained in Behrend
\cite[p.8]{be:gw} for the case of stable maps, the two perfect
relative obstruction theories are compatible which implies
$$[\ovl{\M}_{g,n,\Gamma}(X)] = \Delta^!  [\ovl{\M}_{g,n+2,\Gamma'}(X)]
.$$
Indeed if $\Gamma'$ is obtained from $\Gamma$ by cutting an edge then
we check that the obstruction theories are compatible over $\Delta$.
Consider the Cartesian diagram
$$ \begin{diagram} \node{\ovl{\M}_{g,n,\Gamma}(X)}
\arrow{s,l}{\Psi} \arrow{e,t}{\ovl{\M}(\Ups,X)} 
\node{\ovl{\M}_{g,n+2,\Gamma'}(X)} 
 \arrow{s} \\
 \node{\ovl{\MM}_{g,n,\Gamma}
\times X} \arrow{e} 
 \node{\ovl{\MM}_{g,n+2,\Gamma'} 
 \times X \times X}
\end{diagram} $$
Let $\pi: \cC \to \ovl{\M}_{g,n,\Gamma}(X)$ denote the universal
curve, and let $\cC'' = \ovl{\M}(\Ups,X)^* \cC'$ be the curve over
$\ovl{\M}_{g,n+2,\Gamma'}(X)$ obtained by normalizing at the node
corresponding to the edge, with $p: \cC'' \to \cC$ the projection, and
$ \ev'': \cC'' \to X, \quad \ev: \cC \to X $ 
the universal maps.  So $\cC$ is obtained from $\cC''$ by identifying
the two sections $x_1,x_2$ of $\cC''$, and is equipped with a section
$x$ induced from $x_1,x_2$.  We have a short exact sequence of
complexes relating the push-forward on $\cC,\cC''$,
$$ 0 \to \ev^* TX \to p_* p^* \ev^* TX \to x_* x^* \ev^* TX \to 0 , $$
and so an exact triangle
\begin{equation} \label{doubleprime} 
 R \pi_* \ev^* TX \to R \pi_*'' p^* \ev^* TX \to x^* \ev^* TX \to R
 \pi_* \ev^* TX [1] .\end{equation}
Now if $ E_\Gamma(X) := ( R\pi_* \ev^{*} TX)^\dual $ then
$$ \ovl{\MM}(\Ups)^* E_{\Gamma'}(X) = (R \pi_*'' \ev^{'',*} TX)^\dual
= (R \pi_*'' p^* \ev^* TX)^\dual .$$
Moreover we have an exact triangle
$$ \Psi^* L_\Delta [-1] \to \ovl{\M}(\Ups,X)^* E_{\Gamma''}(X) \to
E_\Gamma(X) \to \Psi^* L_\Delta .$$
This gives rise to a homomorphism of distinguished triangles
$$ \tiny \begin{diagram} 
\node{\ovl{\MM}(\Ups)^* E_{\Gamma'}(X)} \arrow{e} \arrow{s}  
\node{E_\Gamma(X)}
\arrow{e} \arrow{s} \node{\Psi^* L_\Delta} 
%\arrow{e} 
\arrow{s} 
%\node{\ovl{M}(\Ups)^* E_{\Gamma'}(X)[1]} \arrow{s} 
\\
\node{\ovl{\MM}(\Ups)^* L_{\ovl{\M}_{g,n+2,\Gamma'}(X)/\ovl{\MM}_{g,n+2,\Gamma'}}} \arrow{e} 
\node{L_{\ovl{\M}_{g,n,\Gamma}(X)/\ovl{\MM}_{g,n,\Gamma}}} \arrow{e} \node{L_{\ovl{\M}(\Ups,X)}.} 
%\arrow{e} 
%\node{\ovl{\M}(\Ups,X)^* L_{\ovl{\M}_{n+2,\Gamma'}(X)/\MM}[1]}
 \end{diagram} $$
\end{example} 

\begin{example}\label{collapsing} (Collapsing an edge for stable maps)
 Let $\Ups: \Gamma' \to \Gamma$ be a morphism of modular graphs given
 by collapsing an edge.  Associated to $\Ups$ are morphisms of Artin
 resp. Deligne-Mumford stacks
$$ \ovl{\MM}(\Ups): \ovl{\MM}_{g,n,\Gamma'} \to \ovl{\MM}_{g,n,\Gamma}, \quad \ovl{\M}(\Ups):
\ovl{\M}_{g,n,\Gamma'} \to \ovl{\M}_{g,n,\Gamma} .$$
The inclusion of $\ovl{\MM}_{g,n,\Gamma}(X)$ to
$\ovl{\MM}_{g,n,\Gamma'}(X)$ induces an {\em isomorphism} of perfect
relative obstruction theories.  As in Behrend \cite{be:gw}, the
relative obstruction theories for $\ovl{\M}_{g,n,\Gamma}(X),
\ovl{\M}_{g,n,\Gamma'}(X)$ are related by pull-back.  Letting
$s(\Gamma), s(\Gamma')$ denote the stabilizations of $\Gamma,\Gamma'$
consider the diagram from \cite[p. 15]{be:gw}
$$ \begin{diagram} \node{ \sqcup_{d' \to d}
    \ovl{\M}_{g,n,\Gamma'}(X,d')} \arrow{e} \arrow{s}
  \node{\ovl{\M}_{g,n,s(\Gamma')} \times_{\ovl{\M}_{g,n,s(\Gamma)}}
    \ovl{\M}_{g,n,\Gamma}(X,d)} \arrow{s} \arrow{e}
  \node{\ovl{\M}_{g,n,\Gamma}(X,d)} \arrow{s} \\ \node{
    \ovl{\MM}_{g,n,\Gamma'}} \arrow{e} \arrow{se} \node{
    \ovl{\M}_{g,n,s(\Gamma')} \times_{\ovl{\M}_{g,n,s(\Gamma)}}
    \ovl{\MM}_{g,n,\Gamma}} \arrow{s} \arrow{e}
  \node{\ovl{\MM}_{g,n,\Gamma}} \arrow{s} \\ \node{}
  \node{\ovl{\M}_{g,n,s(\Gamma')}} \arrow{e} \node{\ovl{\M}_{g,n,s(\Gamma)}.}
\end{diagram} 
$$ 
All the squares are Cartesian and it follows as in \cite{be:gw} (see
especially \cite[Proposition 8]{be:gw}, which uses bivariant Chow
theory for representable morphisms of Artin stacks) that
$$ \ovl{\M}(\Ups)^!  [\ovl{\M}_{g,n,\Gamma}(X,d)] =  \ovl{\M}(\Ups,X)_* \sum_{d'
  \mapsto d} [\ovl{\M}_{g,n,\Gamma'}(X,d')] $$
where 
$$\ovl{\M}(\Ups,X): \ovl{\M}_{g,n,\Gamma'}(X,d') \to
\ovl{\M}_{g,n,\Gamma'} \times_{\ovl{\M}_{g,n,\Gamma}}
\ovl{\M}_{g,n,\Gamma}(X,d) $$
is the identification with the fiber product. 
\end{example}  

It follows that the virtual fundamental classes
$[\ovl{\M}_{g,n,\Gamma}(X,d)] \in A^G(\ovl{\M}_{g,n,\Gamma}(X,d))$
satisfy the following properties as in Behrend \cite{be:gw}:
%\ref{cutting}, \ref{collapsing}:
\begin{proposition} \label{vfcprops}
\begin{enumerate} 
%CW
\item {\rm (Constant maps)} If $ d =0 $ then
  $[\ovl{\M}_{g,n,\Gamma}(X,d)]$ is obtained by cap product of $[X
  \times \ovl{\M}_{g,n,\Gamma}]$ with the Euler class of $R^1 p_* e^* TX$.
\item {\rm (Cutting edges)} If $\Ups: \Gamma \to \Gamma'$ is a morphism of
  modular graphs of type cutting an edge then
$ [\ovl{\M}_{g,n,\Gamma}(X,d)] = \Delta^! [\ovl{\M}_{g,n+2,\Gamma'}(X,d')] $
where $\Delta: X \to X \times X $ is the diagonal.
\item {\rm (Collapsing edges)} If $\Ups: \Gamma \to \Gamma'$ is a
  morphism of graphs of type collapsing an edge then
$$ \M(\Ups)^! [\ovl{\M}_{g,n,\Gamma'}(X,d')] = \ovl{\cF}(\Ups,X)_*
  \sum_{d \mapsto d'} [\ovl{\M}_{g,n,\Gamma}(X,d)] $$
where 
$$\ovl{\cF}(\Ups,X): \ovl{\M}_{g,n,\Gamma}(X,d) \to
\ovl{\M}_{g,n,s(\Gamma)} \times_{\ovl{\M}_{g,n,s(\Gamma')}}
\ovl{\M}_{g,n,\Gamma'}(X,d') .$$
\item {\rm (Forgetting Tails)} If $\Ups: \Gamma \to \Gamma'$ is a
  morphism of graphs of type forgetting a tail then 
$$ \ovl{\M}(\Ups,X)^! [\ovl{\M}_{g,n,\Gamma'}(X,d)] =
  [\ovl{\M}_{g,n+1,\Gamma}(X,d)] $$
where $\ovl{\M}(\Ups,X)$ was defined in \cite[Example 4.3]{qk2}.
\end{enumerate} 
\end{proposition}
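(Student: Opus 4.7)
The plan is to verify each of the four axioms separately, building on the preceding examples and standard properties of virtual fundamental classes.

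For the constant maps axiom (a), I identify $\ovl{\M}_{g,n,\Gamma}(X,0) \cong X \times \ovl{\M}_{g,n,\Gamma}$ since degree-zero stable maps are constant. The relative obstruction complex $(R\pi_* \ev^* TX)^\vee$ decomposes: its $H^0$-piece is the tangent bundle of $X$ pulled back, contributing the fundamental class $[X \times \ovl{\M}_{g,n,\Gamma}]$ via the standard identification of the smooth part of the obstruction theory with the intrinsic normal sheaf, while its $H^1$-piece $(R^1\pi_* \ev^* TX)^\vee$ gives an obstruction bundle whose Euler class cut against the fundamental class produces the virtual class, as in Behrend \cite[Section 5]{be:gw}.

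For the cutting edges axiom (b), I invoke Example \ref{cutting} directly: compatible perfect relative obstruction theories over $\ovl{\MM}_{g,n,\Gamma} \times X$ and $\ovl{\MM}_{g,n+2,\Gamma'} \times X \times X$ were constructed there, and the smoothness of $X$ makes the diagonal $\Delta: X \to X \times X$ a regular embedding. Behrend-Fantechi \cite[7.5]{bf:in} then yields $[\ovl{\M}_{g,n,\Gamma}(X,d)] = \Delta^! [\ovl{\M}_{g,n+2,\Gamma'}(X,d')]$. For the collapsing edges axiom (c), I invoke the Cartesian diagram of Example \ref{collapsing}: the relative obstruction theories over the Artin stacks $\ovl{\MM}$ agree via pull-back, the Artin stack morphism $\ovl{\M}(\Ups): \ovl{\M}_{g,n,\Gamma'} \to \ovl{\M}_{g,n,\Gamma}$ is a local complete intersection, and Behrend's Proposition 8 of \cite{be:gw} (using bivariant Chow theory for representable morphisms of Artin stacks) yields the stated formula. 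The sum over $d' \mapsto d$ accounts for the distribution of the total degree across the components produced by resolving the collapsed edge. For the forgetting tails axiom (d), the morphism $\ovl{\M}(\Ups,X)$ defined in \cite[Example 4.3]{qk2} identifies $\ovl{\M}_{g,n+1,\Gamma}(X,d)$ with the universal curve over $\ovl{\M}_{g,n,\Gamma'}(X,d)$ (after stabilization), the perfect relative obstruction theory pulls back via $\ovl{\MM}(\Ups)$, and the formula follows from functoriality of the virtual fundamental class under this flat base change.

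The main obstacle is confirming the compatibility datum invoked in (b), namely that the diagram of distinguished triangles at the end of Example \ref{cutting} truly defines a compatibility datum in the Behrend-Fantechi sense, rather than merely commuting at the level of cohomology. This requires careful bookkeeping with the normalization exact sequence $0 \to \ev^* TX \to p_* p^* \ev^* TX \to x_* x^* \ev^* TX \to 0$ and its interaction with the truncated cotangent complexes of the embeddings into $\ovl{\MM}$. Once this verification is in place, (a), (c), and (d) reduce to routine applications of the decomposition of the obstruction complex, base change along an l.c.i.\ morphism of Artin stacks, and flat pullback, respectively.
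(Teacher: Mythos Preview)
Your proposal is correct and follows essentially the same route as the paper, which simply records that (a)--(d) hold ``as in Behrend \cite{be:gw}'' after having spelled out the cutting-edges and collapsing-edges arguments in Examples \ref{cutting} and \ref{collapsing}. Your residual worry about the compatibility datum in (b) is already handled by Example \ref{cutting}: the normalization short exact sequence \eqref{doubleprime} is a genuine short exact sequence of complexes, so its derived pushforward yields an honest distinguished triangle mapping to the cotangent-complex triangle, not merely a diagram commuting in cohomology.
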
 

We now pass from Chow groups/rings to homology/cohomology with
rational coefficients.  (One can work with more general theories here,
as in Behrend-Manin \cite{bm:gw}.)  For any cohomology classes $\alpha
\in H_G(X,\Q)^n$ and $\beta \in H(\ovl{M}_{g,n,\Gamma},\Q)$ (if $2g +
n \ge 3$) pairing with the virtual fundamental class
$[\ovl{\M}_{g,n,\Gamma}(X,d)] \in H(\ovl{M}_{g,n,\Gamma}(X,d)) $
defines a {\em Gromov-Witten invariant}
$$ \lan \alpha ; \beta \ran_{\Gamma,d} =
\int_{[\ovl{\M}_{g,n,\Gamma}(X,d)]} \ev^* \alpha \cup f^* \beta \in 
H(BG).$$
These invariants satisfy axioms for morphisms of modular graphs:
\begin{proposition}   \label{gwprops} 
\begin{enumerate} 
\item {\rm (Cutting edges)}  If $\Gamma'$ is obtained from $\Gamma$ by cutting an edge then
$$ \lan \alpha ; \beta \ran_{\Gamma,d} = \sum_{i=1}^N \lan \alpha,
  \delta_i,  \delta^i ; \ovl{\M}(\Ups)^* \beta \ran_{\Gamma',d}$$
where $(\delta_i)_{i=1}^N,(\delta^i)_{i=1}^N$ are dual bases for
$H_G(X)$ over $H(BG)$.
\item {\rm (Collapsing edges)} If $\Ups: \Gamma \to \Gamma'$ is a
  morphism collapsing an edge then
$$ 
\lan \alpha ; \beta \cup \gamma 
\ran_{\Gamma',d'} =
  \sum_{d \mapsto d'} \lan \alpha ; 
\ovl{\M}(\Ups)^* 
\beta \ran_{\Gamma,d}$$
where $\gamma \in H^2 (\ovl{\M}_{g,n,\Gamma'})$ is the dual
class for $\ovl{\M}(\Ups): \ovl{\M}_{g,n,\Gamma} \to
\ovl{\M}_{g,n,\Gamma'}$.
\item {\rm (Forgetting tails)} If $\Ups: \Gamma \to \Gamma'$ is a
  morphism forgetting a tail then for $\alpha' \in H^2_G(X)$,
$$ \lan \alpha, \alpha' ; \ovl{\M}(\Ups)^* \beta \ran_{\Gamma,d} =
  (d, \alpha') \lan \alpha ; \beta \ran_{\Gamma',d'}$$
\end{enumerate} 
\end{proposition} 

\begin{proof} These follow from Proposition \ref{vfcprops} as in 
(the more abstract formulation) in 
Behrend-Manin \cite[Theorem
    9.2]{bm:gw} to which we refer the reader for more detail.
\end{proof} 

\begin{definition} {\rm (Novikov field)} 
The Novikov field $\Lambda_X$ for $X$ is the set of all maps $a:
H_2(X):=H_2(X,\Z)/\on{torsion} \to \Q$ such that for every constant
$c$, the set of classes
%
%CW 
$$ \left\{ d \in H_2(X,\Z)/\on{torsion}, \ \lan [\omega], d \ran \leq
c, \ \ a(d) \neq 0 \right\} $$
on which $a$ is non-vanishing is finite.  The delta function at $d$ is
denoted $q^d$.  Addition is defined in the usual way and
multiplication is convolution, so that $q^{d_1} q^{d_2} = q^{d_1 +
  d_2}$.  
\end{definition} 

Define as vector space the {\em quantum cohomology of $X$}
$$ QH_G(X) := H_G(X,\Q) \otimes \Lambda_X .$$
Define genus $g$ correlators
$$\lan \cdot , \cdot \ran_{g,n} = \sum_{d \in H_2(X)} q^d \lan \cdot ,
  \cdot \ran_{\Gamma,d} $$
where $\Gamma$ is a genus $g$ graph with one vertex and $n$
semi-infinite edges.  By Proposition \ref{gwprops},

\begin{theorem} (\cite{gi:eq}, \cite{be:gw})
After tensoring with the field of fractions of $H(BG)$ the space
$QH_G(X)$ equipped with the maps $\lan \cdot, \cdot \ran_{g,n}$ forms
a cohomological field theory.
\end{theorem}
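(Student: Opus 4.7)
The plan is to verify each of the Kontsevich--Manin axioms of a cohomological field theory on $(QH_G(X), \lan\cdot,\cdot\ran_{g,n})$---namely $S_n$-equivariance, tree- and loop-level splitting, the unit axiom, and compatibility with the forgetful morphisms---by translating each to one of the properties in Proposition~\ref{gwprops} or Proposition~\ref{vfcprops}.

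First I would unpack the CohFT structure as the collection of maps $\Omega_{g,n}:QH_G(X)^{\otimes n}\to H^*(\ovl{\M}_{g,n})\otimes \Lambda_X$ (with coefficients extended to $\on{Frac}(H(BG))$) defined by
\begin{equation*}
\Omega_{g,n}(\alpha_1,\dots,\alpha_n) = \sum_d q^d\, f_*\bigl(\ev^*(\alpha_1\otimes\cdots\otimes\alpha_n)\cap[\ovl{\M}_{g,n,\Gamma}(X,d)]\bigr),
\end{equation*}
with $\Gamma$ the one-vertex graph of genus $g$ with $n$ legs and $f:\ovl{\M}_{g,n,\Gamma}(X,d)\to \ovl{\M}_{g,n}$ the forgetful morphism. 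The $S_n$-equivariance is immediate from relabeling markings on the moduli stack, and Novikov convergence holds by the defining finiteness condition on $\Lambda_X$.

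Second, I would derive both tree-level and loop-level splitting from the cutting-edges property. For a boundary stratum inclusion $\iota_\Gamma:\ovl{\M}_{g',n'+1}\times \ovl{\M}_{g'',n''+1}\to \ovl{\M}_{g,n}$, and analogously for the non-separating version $\ovl{\M}_{g-1,n+2}\to \ovl{\M}_{g,n}$, the corresponding graph morphism cuts the unique internal edge; applying Proposition~\ref{gwprops}(a) and pushing forward along $f$ yields the dual-basis decomposition
\begin{equation*}
\iota_\Gamma^*\Omega_{g,n}(\alpha_1,\dots,\alpha_n) = \sum_{d'+d''=d}\sum_i \Omega_{g',n'+1}(\alpha_{I'},\delta_i)\otimes \Omega_{g'',n''+1}(\alpha_{I''},\delta^i),
\end{equation*}
for $\{\delta_i\},\{\delta^i\}$ dual bases under the equivariant Poincar\'e pairing. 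The unit axiom along $\pi:\ovl{\M}_{g,n+1}\to \ovl{\M}_{g,n}$ follows from Proposition~\ref{gwprops}(c) by taking $\alpha'=1$ and noting $(d,1)=0$, combined with the three-point constant-map identity $\lan 1,\alpha,\beta\ran_{0,3,0} = \int_X^G\alpha\cup\beta$ extracted from Proposition~\ref{vfcprops}(a).

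The main obstacle is justifying the dual-basis expansion of the diagonal $\Delta_*[X]$ appearing in the cutting-edges formula: when $X$ is smooth projective, $H_G(X,\Q)$ is finitely generated over $H(BG)$ but the equivariant Poincar\'e pairing need not be perfect until one inverts the Euler classes of the normal bundles to torus fixed loci, so one must pass to $\on{Frac}(H(BG))$ and invoke Atiyah--Bott localization to construct honest dual pairs $\{\delta_i\},\{\delta^i\}$---which is precisely the technical reason the theorem is phrased only after this base change.
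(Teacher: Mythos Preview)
Your approach is essentially the paper's own: the theorem is stated immediately after Proposition~\ref{gwprops} with the one-line justification ``By Proposition~\ref{gwprops}'', and you are simply unpacking how those properties yield the CohFT axioms.

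One small omission worth flagging: to compute $\iota_\Gamma^*\Omega_{g,n}$ you need \emph{both} parts (b) and (a) of Proposition~\ref{gwprops}, not just (a). The restriction of the one-vertex invariant to a boundary divisor is governed by the collapsing-edges axiom (b), which identifies the pullback along $\ovl{\M}(\Ups)$ with the sum $\sum_{d'+d''=d}$ of invariants for the two-vertex type $\Gamma$; only then does cutting-edges (a) decompose the two-vertex invariant via the diagonal and produce the dual-basis sum. Your displayed formula already contains the $\sum_{d'+d''=d}$, so you are implicitly using (b), but the text attributes everything to (a). Otherwise your identification of the unit axiom with (c) plus Proposition~\ref{vfcprops}(a), and your explanation of the passage to $\on{Frac}(H(BG))$ to obtain genuine dual bases, are correct and on point.
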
 

Restricting to genus zero we obtain a CohFT algebra: Maps
$$ \mu^n: QH_G(X)^n \times H(\ovl{M}_{0,n+1},\Q) \to QH_G(X) $$
defined by
$$ (\mu^n(\alpha_1,\ldots,\alpha_n;\beta),\alpha_0) = \sum_{d \in
  H_2(X,\Z)} q^d \lan \alpha_0,\ldots,\alpha_n ; \beta \ran_{0,d} \in
\Lambda_X .$$
Here $( \ , \ )$ denotes the pairing on $QH_G(X)$ induced by cup
product and integration over $H_G(X)$. 

A related collection of invariants is expressed as the integrals over
{\em parametrized} stable maps to $X$.  Let $\Hom(C,X,d) \subset
\Hom(C,X)$ denote the subscheme of maps of class $d \in H_2(X,\Z)$.
Compactifications of $\Hom(C,X,d)$ are provided by so-called {\em
  graph spaces}
$$ \ovl{\M}_n(C,X,d) := \ovl{\M}_{g,n}(C \times X,(1,d))$$ 
of stable maps $u: \hat{C} \to C \times X$ of degree $(1,d)$.  Each
stable map $u = (u_C,u_X): \hat{C} \to C \times X$ has a single
component $\hat{C}_0 \subset \hat{C}$ that maps isomorphically onto
$C$ via $u_C$, with all other components mapping to points.  We denote
by
\begin{equation} \label{evC}
 \ev: \ovl{\M}_n(C,X) \to X^n, \quad \ev_C: \ovl{\M}_n(C,X) \to C^n \end{equation}
the evaluation maps followed by projection on the second, resp. first
factor.  The stacks $\ovl{\M}_n(C,X,d)$ have equivariant relatively
perfect obstruction theories over $\ovl{\MM}_n(C)$ with complex given by
$ Rp_* e^* TX$, where $p: \ovl{\cC}_n(C,X) \to \ovl{\M}_n(C,X)$ is the
universal curve and $e: \ovl{\cC}_n(C,X) \to C$ the evaluation map.
For any cohomology classes $\alpha \in H_G(X,\Q)^n$ and $\beta \in
H(\ovl{\M}_{n,\Gamma}(C),\Q)$ pairing with the virtual fundamental
class $[\ovl{\M}_{n,\Gamma}(C,X,d)] \in H(\ovl{\M}_{n,\Gamma}(C,X,d)) $
defines a {\em graph Gromov-Witten invariant}
\begin{equation} \label{graphinv}
 \lan \alpha ; \beta \ran_{C,\Gamma,d} =
\int_{[\ovl{\M}_{n,\Gamma}(C,X,d)]} \ev^* \alpha \cup f^* \beta \in
H(BG) .\end{equation} 
%
%CW
These invariants satisfy axioms for morphisms of {\em rooted} modular
trees, similar to those in Proposition \ref{gwprops} which we omit to
save space.
%%
%\begin{proposition}   \label{tracegwprops} 
%\begin{enumerate} 
%\item {\rm (Cutting edges)} If $\Gamma'$ is obtained from $\Gamma$ by
%  cutting an edge then
%%
%$$ \lan \alpha ; \beta \ran_{C,\Gamma,d} = \sum_{i=1}^N \lan \alpha,
%  \delta_i,  \delta^i ; \ovl{\M}(\Ups)^* \beta \ran_{C,\Gamma',d}$$
%%
%where $\delta_i,\delta^i$ are dual bases for $H_G(X)$ over $H(BG)$;
%%
%\item {\rm (Collapsing edges)} If $\Ups: \Gamma \to \Gamma'$ is a
%  morphism collapsing an edge then
%%
%$$ 
%\lan \alpha ; \beta \cup \gamma 
%\ran_{C,\Gamma',d} =
%  \sum_{d \mapsto d'} \lan \alpha ; 
%\ovl{\M}(\Ups)^* 
%\beta \ran_{C,\Gamma,d}$$
%%
%where $\gamma \in H^2 (\ovl{\M}_{n,\Gamma'}(C))$ is the dual class for
%$\ovl{\M}(\Ups): \ovl{\M}_{n,\Gamma}(C) \to \ovl{\M}_{n,\Gamma'}(C)$.
%%
%\item {\rm (Forgetting tails)} If $\Ups: \Gamma \to \Gamma'$ is a
%  morphism forgetting a tail then for $\alpha' \in H^2_G(X)$,
%%
%$$ \lan \alpha, \alpha' ; \ovl{\M}(\Ups)^* \beta \ran_{C,\Gamma,d} =
%  (d, \alpha') \lan \alpha ; \beta \ran_{C,\Gamma',d'}$$
%%
%\end{enumerate} 
%\end{proposition} %%%
%
%\begin{proof} The proof is similar to Proposition \ref{gwprops} and omitted.  
%\end{proof} 
%
Define 
\begin{multline} 
 \tau_X^{n}: QH_G(X)^n \times H(\ovl{M}_n(C)) \to \Lambda_X^G \otimes
 H(BG), \ (\alpha,\beta) \mapsto \sum_{d} q^d \lan \alpha ; \beta
 \ran_{C,d} .\end{multline}
%
%CW
The splitting axiom for these invariants implies:

\begin{theorem}  \label{tracethm}
The maps $(\tau_{X}^{n})_{n \ge 0}$ define a CohFT trace on the CohFT
algebra $QH_G(X)$.
\end{theorem}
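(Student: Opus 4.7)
The plan is to derive the CohFT trace axioms directly from the splitting axioms for graph Gromov-Witten invariants, parallel to Proposition~\ref{gwprops}, but applied to the rooted-tree setting indicated (but omitted) for the invariants \eqref{graphinv}. A CohFT trace on the CohFT algebra $QH_G(X)$ amounts to a collection of multilinear maps $\tau_X^n \colon QH_G(X)^n \otimes H(\ovl{M}_n(C)) \to \Lambda_X^G \otimes H(BG)$ compatible with the tree-level structure: cutting an edge that attaches a bubble tree to the main parametrized component should express $\tau_X^n$ in terms of $\tau_X^k$ (with fewer insertions) composed with $\mu^{n-k+1}$, while collapsing edges in the parametrized-tree structure and forgetting tails should match the analogous operations on $\ovl{M}_n(C)$.

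First I would recall the structure of a stable map $u=(u_C,u_X)\colon \hat C \to C\times X$ of degree $(1,d)$: there is a unique ``main'' component $\hat C_0$ mapping isomorphically to $C$, and the other components form bubble trees attached to $\hat C_0$ at nodes. Any morphism of rooted modular trees $\Ups\colon \Gamma'\to\Gamma$ cutting an edge therefore falls into two cases: either the cut edge lies entirely within a bubble tree (a purely non-parametrized edge), in which case the splitting is exactly the Gromov-Witten splitting of Proposition~\ref{gwprops}; or the cut edge connects the parametrized component to a bubble tree. In the second case, cutting produces one rooted tree $\Gamma_1$ (still carrying the parametrized root) and one unrooted tree $\Gamma_2$ (a pure bubble), so the cutting-edges analog for graph invariants yields
$$
\lan \alpha ; \beta \ran_{C,\Gamma,d} \;=\; \sum_{i=1}^N \sum_{d_1+d_2=d} \lan \alpha_1,\delta_i ; \ovl{\M}(\Ups)^*\beta \ran_{C,\Gamma_1,d_1}\, \lan \delta^i,\alpha_2; 1 \ran_{\Gamma_2,d_2},
$$
where $(\delta_i),(\delta^i)$ are dual bases of $H_G(X)$.

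Summing over $d$ and inserting $q^d = q^{d_1}q^{d_2}$ turns this identity into precisely the CohFT trace axiom: the right-hand side reads as $\tau_X^{k+1}$ applied to $(\alpha_1,\mu^{n-k}(\alpha_2;\beta_2))$, paired against $\ovl{\M}(\Ups)^*\beta$. For the collapsing edges axiom on $\Gamma$ I would apply the graph analog of Proposition~\ref{vfcprops}(c), which pushes the identity through the corresponding inclusion of boundary strata in $\ovl{M}_n(C)$, producing the usual divisor-class factor when the collapsed edge is internal to the parametrized tree; forgetting tails is handled identically to Proposition~\ref{gwprops}(c). Assembling all axioms then gives the CohFT trace structure.

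The main obstacle is verifying the rooted-tree splitting/collapsing axioms for the graph invariants themselves, which the excerpt stated but omitted. The key point is compatibility of the perfect obstruction theory on $\ovl{\M}_n(C,X,d)$ (relative to $\ovl{\MM}_n(C)$, with complex $Rp_* e^* TX$) with the diagonal pullback used to cut an edge: this is an immediate application of the Behrend-Fantechi compatibility criterion recalled before Example~\ref{cutting}, since the normalization of $\hat C$ at the node of the cut edge gives the same short exact sequence \eqref{doubleprime} regardless of whether the two resulting sections lie on the parametrized component or on a bubble. Once this compatibility is in hand, the identities pass to cohomology and the Novikov-weighted sums automatically produce the CohFT trace axioms as formulated in Behrend~\cite{be:gw} and Behrend-Manin~\cite{bm:gw}.
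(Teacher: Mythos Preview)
Your proposal is correct and follows essentially the same approach as the paper: the paper states (just before the theorem) that the graph invariants satisfy splitting axioms for morphisms of rooted modular trees analogous to Proposition~\ref{gwprops}, and then asserts that the trace structure follows from these axioms. Your write-up simply unpacks this implication in detail---separating the cutting-an-edge axiom into the ``within a bubble'' and ``bubble-to-root'' cases and checking the Behrend--Fantechi compatibility via the same normalization sequence \eqref{doubleprime}---which is exactly what the paper leaves implicit.
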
 

The Gromov-Witten invariants have a natural circle-invariant extension
given by interpreting the equivariant parameter as the first Chern
class of the tangent lines at the markings.  Let
$\psi_i \in H^2(\ovl{\M}_n^G(C,X))$ denote the Chern class of the
cotangent line at the $i$-th marking.  Given 
$\alpha_i(\zeta) \in QH_{G \times S^1}(X) \cong QH_G(X)[\zeta]$ for $I = 1,\ldots, n$ 
define 
\begin{multline}
 \lan \alpha(\zeta) ; \beta \ran_{\Gamma,d} =
\int_{[\ovl{\M}_{g,n,\Gamma}(X,d)]} \ev_1^* \alpha_1 |_{\zeta =
  -\psi_1} \\
\ldots \ev_n^* \alpha_n |_{\zeta = -\psi_n}
 \cup f^* \beta \in
H(BG).\end{multline}
This circle-invariant extension will play a role in the adiabatic
limit theorem given later.

%\begin{proof} 
%By Proposition \ref{tracegwprops} (a) and (b).
%\end{proof} 

\subsection{Gromov-Witten theory for smooth Deligne-Mumford stacks} 

We review the orbifold Gromov-Witten theory developed by Chen-Ruan
\cite{cr:orb} and Abramovich-Graber-Vistoli \cite{agv:gw}, needed in
our case if the geometric invariant theory quotient $X \qu G$ is an
orbifold.  For simplicity, we restrict to the case without group
action.

Let $\XX$ be a proper smooth Deligne-Mumford stack.  The moduli stack
of twisted stable maps admits evaluation maps
$$ \ev: \ovl{\M}_{g,n}(\XX) \to \ovl{I}_\XX^n, 
\quad  \ovl{\ev}: \ovl{\M}_{g,n}(\XX) \to \ovl{I}_\XX^n, 
 $$
where the second is obtained by composing with the involution of the
rigidified inertia stack $\ovl{I}_{\XX} \to \ovl{I}_{\XX} $ induced by
the automorphism of the group $\mu_r$ of $r$-th roots of unity $\mu_r
\to \mu_r, \varphi \mapsto \varphi^{-1}$.  (See \cite[Section
  4.3]{qk2} for the definition of the rigidified inertia stack.)
The virtual fundamental classes satisfy splitting axioms for morphisms
of modular graphs, in particular, for cutting an edge in which case
one of the evaluation maps is taken to be with respect to opposite
signs on the pair of marked points created by the cutting.  Given a
homology class $d \in H_2(\XX,\Q)$ and non-negative integers $g,n$,
let $\ovl{\M}_{g,n}(\XX,d)$ denote the moduli stack of stable maps to
$\XX$ with class $d$.  The virtual fundamental classes
$[\ovl{\M}_{g,n,\Gamma}(\XX,d)]$ satisfy the splitting axioms for
morphisms of modular graphs similar to those in the case that $X$ is a
variety.  Orbifold Gromov-Witten invariants are defined by virtual
integration of pull-back classes using the evaluation maps above. For
non-negative integers $n_- + n_+ = n$ denote by $\ev^*_{n_+}$ resp.  $
\ovl{\ev}^*_{n_-}$ the untwisted resp. twisted evaluation map on the
first $n_+$ resp. last $n_-$ markings.  Define {\em Gromov-Witten
  invariants}
\begin{multline}
 H(\ovl{I}_\XX)^{n_+} \times H(\ovl{I}_\XX)^{n_-} \times H(\ovl{M}_{g,n})
 \to \Q, \\ (\alpha_+,\alpha_-, \beta) \mapsto \lan
 \alpha_+,\alpha_-,\beta \ran_{\Gamma,d} = \int_{
   [\ovl{\M}_{g,n,\Gamma}(\XX,d)]} \ev^*_{n_+} \alpha_+ \cup \ovl{\ev}^*_{n_-}
 \ovl{\alpha}_- \cup f^* \beta.\end{multline}
The orbifold Gromov-Witten invariants satisfy properties similar to
those for usual Gromov-Witten invariants, with the notable exception
\cite[6.1.4]{agv:gw} that if $\Gamma'$ is obtained from $\Gamma$ by
cutting an edge then
\begin{equation} \label{orbsplit} \langle \alpha ; \beta \rangle_{\Gamma,d} = \sum_k \langle
\alpha, \delta_k , \delta^k; \ovl{\M}(\Ups)^* \beta
\rangle_{\Gamma',d} \end{equation} 
where $\delta_k,\delta^k$ are dual bases of $H(\ovl{I}_\XX)$ with
respect to a different inner product: the inner product defined using
re-scaled integration $( [\ovl{I}_{\XX}], r \cdot)$ where $r:
\ovl{I}_{\XX} \to \Z_{ \ge 0}$ is the order of the isotropy group.
The definition of orbifold Gromov-Witten invariants leads to the
definition of orbifold quantum cohomology as follows.

\begin{definition} {\rm (Orbifold quantum cohomology)}
To each component $\XX_i$ of $\ovl{I}_\XX$ is assigned a rational
number $\age(\XX_i)$ as follows.  Let $(x,g)$ be an object of $\XX_i$.
The element $g$ acts on $T_x \XX$ with eigenvalues $\alpha_1,
\ldots,\alpha_n$ with $ n = \dim(\XX)$.  Let $r$ be the order of $g$
and define $s_j \in \{ 0,\ldots, r - 1 \}$ by $\alpha_j = \exp( 2\pi i
s_j / r)$.  The {\em age} is defined by
$$ \age(\XX_i)  = (1/r) \sum_{j=1}^n s_j .$$
Let $ \Lambda_\XX \subset \Hom(H_2(X,\Q),\Q) $ denote the Novikov
field of linear combinations of formal symbols $q^d, d \in
H_2(\XX,\Q)$ where for each $c$, only finitely many $q^d$ with
$(d,[\omega]) < c$ have non-zero coefficient.  Let
$$ QH(\XX) = H(\ovl{I}_\XX) \otimes \Lambda_\XX $$
denote the {\em orbifold quantum cohomology} equipped with the {\em
  age grading}
$$ QH^\bullet(\XX) = \bigoplus_{\XX_i \subset \ovl{I}_\XX} H^{\bullet - 2
  \age(\XX_i)}(\XX_i) \otimes \Lambda_\XX .$$
\end{definition}

\begin{theorem}  The orbifold Gromov-Witten invariants
define the structure of a CohFT on $QH(\XX)$, in particular, a CohFT
algebra structure on $QH(\XX)$ and the graph invariants define a trace
on $QH(\XX)$.
\end{theorem}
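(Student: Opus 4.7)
The plan is to mirror the proof of the corresponding statement for smooth varieties (Theorem following Proposition \ref{gwprops}), replacing the evaluation/splitting data by their orbifold analogues and, crucially, using the rescaled Poincar\'e pairing on $H(\ovl{I}_\XX)$ to absorb the order-of-isotropy factor that appears at nodes of twisted stable maps.

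First I would record an orbifold analogue of Proposition \ref{vfcprops} for the virtual fundamental classes $[\ovl{\M}_{g,n,\Gamma}(\XX,d)]$: constant maps, cutting an edge, collapsing an edge, and forgetting tails. The constant, collapsing, and forgetting axioms are proved exactly as in Behrend \cite{be:gw} using Behrend--Fantechi compatible obstruction theories (Example \ref{collapsing}), since the cotangent complex formalism applies verbatim for smooth Deligne--Mumford targets. The cutting-edges formula requires the twisted analogue of the Cartesian diagram in Example \ref{cutting}: one forms the fiber product over the diagonal $\ovl{I}_\XX \hookrightarrow \ovl{I}_\XX \times \ovl{I}_\XX$, but with opposite markings twisted by the involution $\varphi \mapsto \varphi^{-1}$ of $\mu_r$, and one checks as in Abramovich--Graber--Vistoli \cite[\S 6]{agv:gw} that the relative obstruction theories are compatible, noting that the normalization of a twisted node produces a degree-$r$ cover of the rigidified inertia stack.

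Second, I would translate the vfc axioms into axioms for the numerical invariants. The constant, collapsing, and forgetting axioms transfer identically to Proposition \ref{gwprops}. For cutting edges, integration against $\Delta^![\ovl{\M}_{g,n+2,\Gamma'}(\XX,d)]$ produces, after accounting for the degree-$r$ automorphism at the twisted node, the formula \eqref{orbsplit}
\[
\lan \alpha;\beta\ran_{\Gamma,d} = \sum_k \lan \alpha,\delta_k,\delta^k;\ovl{\M}(\Ups)^*\beta\ran_{\Gamma',d},
\]
with $\{\delta_k\},\{\delta^k\}$ dual with respect to the rescaled inner product $([\ovl{I}_\XX],\, r\,\cdot\,)$. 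The point is that the extra factor $r$ in the inner product is exactly the contribution of the $\mu_r$-gerbe structure at the node, so the combinatorics becomes formally identical to the smooth-variety cutting axiom once the inner product is redefined.

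Third, with these axioms in hand, I would assemble the CohFT structure on $QH(\XX) = H(\ovl{I}_\XX) \otimes \Lambda_\XX$. Define the genus $g$ correlators
\[
\lan\cdot;\cdot\ran_{g,n} = \sum_{d \in H_2(\XX,\Q)} q^d \lan\cdot;\cdot\ran_{\Gamma,d}
\]
for $\Gamma$ the graph with one vertex of genus $g$ and $n$ semi-infinite edges; the Novikov finiteness condition guarantees well-definedness. The CohFT axioms (associativity in the operadic sense, unit, cutting/gluing) then follow formally from the splitting axioms for the virtual fundamental classes and $\eta$-compatibility of the rescaled pairing, as in Behrend \cite{be:gw}. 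Restricting to genus zero gives a CohFT algebra structure on $QH(\XX)$, and the same formalism applied to the graph spaces $\ovl{\M}_n(C,\XX,d)$---whose cutting/collapsing axioms for rooted trees are analogous to the variety case---yields the trace, as in Theorem \ref{tracethm}.

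The main obstacle is bookkeeping the rescaling factor $r$ at the twisted nodes: one must verify that the orbifold cutting-edges axiom, once compatible obstruction theories are established, produces precisely the $r$-weighted inner product so that the resulting $\lan\cdot,\cdot\ran_{g,n}$ obey the CohFT gluing identity without extra factors. Once this normalization is in place, the remainder of the argument is a routine transcription of the smooth-variety proof.
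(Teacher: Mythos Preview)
Your proposal is correct and follows essentially the same approach as the paper: both derive the CohFT structure from the orbifold splitting axiom \eqref{orbsplit} (with the $r$-rescaled pairing on $H(\ovl{I}_\XX)$), and the trace from the analogous splitting axiom for graph invariants. The paper's proof is a one-line citation of \eqref{orbsplit}, whereas you spell out the derivation of that axiom from compatible obstruction theories as in \cite{agv:gw} and the subsequent assembly into a CohFT, but the underlying argument is the same.
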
  

\begin{proof}   This follows from the splitting axiom \eqref{orbsplit},
and the analogous splitting axiom for the graph invariants whose proof
is similar.  
\end{proof} 

%An involution is said to satisfy the {\em Hard Lefschetz condition} if
%the involution preserves the age.  This holds for example if $\XX$ is
%a Gorenstein orbifold.

\subsection{Twisted Gromov-Witten invariants} 

We also describe twisted versions of Gromov-Witten invariants arising
from vector bundles on the target, see for example Coates-Givental
\cite{co:qrr}.  Under suitable positivity assumptions, these
invariants are equal to the Gromov-Witten invariants of hypersurfaces
defined by sections.  

\begin{definition} {\rm (Twisting class and twisted Gromov-Witten invariants)} 
Let $E$ be a $G$-equivariant complex vector bundle over a smooth
projective $G$-variety $X$.  Pull-back under the evaluation map $e:
\ovl{\cC}_{g,n}(X) \to X$ on the universal curve gives rise to a vector
bundle $\ev^{*} E \to \ovl{\cC}_{g,n}(X)$, which we can push down to an
{\em index}
$$\Ind_G(E) := Rp_* e^* E$$
in the derived category of bounded complexes of equivariant coherent
sheaves on $\ovl{\M}_{g,n}(X)$.  Since $p$ is a local complete
intersection morphism, $\Ind_G(E)$ admits a resolution by vector
bundles, see \cite[Appendix]{co:qrr}, and we may consider the
(invertible) equivariant Euler class
$$ \eps(E) := \Eul_{G \times \C^\times}(\Ind_{G}(E)) \in
H_G(\ovl{\M}_{g,n}(X)) \otimes \Q[\varphi,\varphi^{-1}]$$
where $\varphi$ is the parameter for the action of $\C^\times$ by scalar
multiplication in the fibers.  The {\em twisted equivariant
  Gromov-Witten invariants} associated to $E \to X$ and type $\Gamma$
are the maps
\begin{multline} H_G(X)^n \times H(\ovl{M}_{g,n,\Gamma}) \to  H_G(\pt,\Q) \otimes
 \Q[\varphi,\varphi^{-1}]
\\
 \lan \alpha ; \beta \ran_{\Gamma,E,d} =
 \int_{[\ovl{\M}_{n,\Gamma}(C,X)]} \ev^* \alpha \cup f^* \beta \cup
 \eps(E) 
.\end{multline}
\end{definition} 

\begin{proposition} 
The twisted invariants satisfy the properties:
\begin{enumerate} 
\item {\rm (Collapsing edges)} If $\Ups: \Gamma \to \Gamma'$ is of
  type collapsing an edge then for any labelling $d'$ of $\Gamma'$,
$$ \langle  \alpha ; \beta \cup  \gamma \rangle_{\Gamma',d',E}  
= \sum_{d \mapsto d'} 
\langle  \alpha ; \ovl{\M}(\Ups)^* \beta  \rangle_{\Gamma,d,E}   $$
where $\gamma$ is the dual class to $\ovl{\M}(\Ups)$.
\item {\rm (Cutting edges)} If $\Ups: \Gamma \to \Gamma'$ is of type
  cutting an edge then
$$ \langle \alpha ; \beta \rangle_{\Gamma,d,E} = \sum_k \langle
  \alpha, \delta_k \cup \Eul_{G \times \C^\times}(E), \delta^k; \ovl{\M}(\Ups)^*
  \beta \rangle_{\Gamma',d,E} .$$
\item {\rm (Forgetting tails)} If $\Ups: \Gamma \to \Gamma'$ is a
  morphism forgetting a tail, which corresponds to the last marking
  $z_n$  then for $\alpha' \in H^2_G(X), \alpha \in H_G(X)^{n-1}$,
$$ \lan \alpha, \alpha' ; \ovl{\M}(\Ups)^* \beta \ran_{\Gamma,d,E} =
  (d, \alpha') \lan \alpha ; \beta \ran_{\Gamma',d',E}$$
\end{enumerate} 
\end{proposition}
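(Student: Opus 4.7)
The plan is to follow the proof of Proposition \ref{gwprops} (the untwisted analogue) while tracking how the twisting class $\eps(E) = \Eul_{G \times \C^\times}(\Ind_G(E))$ transforms under each graph morphism $\Ups$. The functorial behavior of $\eps(E)$ is controlled by that of the index $\Ind_G(E) = R\pi_*\ev^* E$ under pull-back, normalization, and restriction to sections of the universal curve.

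For the collapsing edges axiom (a), the universal curve and evaluation map for $\Gamma$ are pull-backs of those for $\Gamma'$ via $\ovl{\M}(\Ups,X)$, so $\eps(E)_\Gamma = \ovl{\M}(\Ups,X)^* \eps(E)_{\Gamma'}$. Combined with Proposition \ref{vfcprops}(c) and the projection formula, this gives (a). For the forgetting tails axiom (c), the universal curve is likewise stable under the pull-back by $\ovl{\M}(\Ups,X)$, so $\eps(E)$ pulls back directly and the axiom reduces to Proposition \ref{vfcprops}(d).

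The substantive case is the cutting edges axiom (b). The key new input is the short exact sequence
$$ 0 \to \ev^* E \to p_* p^* \ev^* E \to x_* x^* \ev^* E \to 0 $$
on the universal curve $\cC$ of $\ovl{\M}_{\Gamma}(X,d)$, entirely parallel to the sequence used to derive \eqref{doubleprime}. Taking $R\pi_*$ yields an exact triangle, so in $K$-theory the index on $\ovl{\M}_\Gamma(X,d)$ differs from the pull-back of the index on $\ovl{\M}_{\Gamma'}(X,d)$ by the class $x^*\ev^*E$ at the node; consequently $\eps(E)_\Gamma$ and the pull-back of $\eps(E)_{\Gamma'}$ differ by a factor of $\Eul_{G \times \C^\times}(E)$ evaluated at the node. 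Combining this with Proposition \ref{vfcprops}(b), the projection formula for the inclusion $\ovl{\M}_\Gamma(X,d) \hookrightarrow \ovl{\M}_{\Gamma'}(X,d)$, and the K\"unneth expansion $[\Delta] = \sum_k \delta_k \boxtimes \delta^k$ of the diagonal class then yields the formula in (b), with the $\Eul_{G \times \C^\times}(E)$ factor absorbed into one of the inserted dual-basis classes.

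The main obstacle is the cutting edges case: one must carefully trace the placement of the $\Eul_{G \times \C^\times}(E)$ factor through the refined Gysin pull-back $\Delta^!$ and its interaction with the decomposition of the diagonal, matching conventions so that the Euler class appears cupped with the correct dual-basis element as stated. The remaining two axioms are essentially immediate consequences of their untwisted counterparts in Proposition \ref{vfcprops}.
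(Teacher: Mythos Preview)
Your proposal is correct and follows essentially the same approach as the paper: the paper's proof treats only the cutting-edges axiom (declaring the other two ``similar to the untwisted case''), and does so via the same short exact sequence $0 \to e^* E \to p_* p^* e^* E \to x_* x^* e^* E \to 0$ on the universal curve, the resulting exact triangle after $R\pi_*$, and the multiplicativity of the Euler class. If anything, you have supplied more detail than the paper does in tracking the extra $\Eul_{G\times\C^\times}(E)$ factor through the diagonal decomposition.
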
 

\begin{proof}  We discuss only the cutting-edge axiom; the rest
are similar to those in the untwisted case.  Let $p: \cC' \to \cC$ be
the normalization and $x$ the section given by the node corresponding
to the cut edge.  The short exact sequence of sheaves
$$ 0 \to E \to p^* p_* E \to x^* x_* E \to 0 $$
gives rise to an exact triangle in the derived category of bounded
complexes of coherent sheaves
$$ R \pi_* \ev^* E \to R \pi_*'' p^* \ev^* E \to x^* \ev^* E \to R
\pi_* \ev^* E [1] .$$
Taking Euler classes gives the result.
\end{proof} 

A receptacle for twisted Gromov-Witten invariants is the equivariant
cohomology of a point with the equivariant parameter inverted.  This
means that twisted composition maps take values in the equivariant
cohomology of the space with equivariant parameter inverted. Define
$$QH_{G \times \C^\times}(X,\Q) = QH_G(X,\Q) \otimes \Q[\varphi,\varphi^{-1}]$$ 
and define twisted composition maps
$$ \mu_{E}^{g,n}: QH_{G \times \C^\times}(X,\Q)^n \times H(\ovl{\M}_{g,n+1},\Q)
\to QH_{G \times \C^\times}(X,\Q), $$
$$ (\mu^{g,n}_{E}(\alpha_1,\ldots,\alpha_n;\beta),\alpha_0) :=
\sum_{d \in H_2(X)} q^d \lan \alpha_0 \cup \Eul_{G \times
  \C^\times}(E),\ldots,\alpha_n ; \beta \ran_{\Gamma,d,E} $$
where $\Gamma$ is a genus $g$ graph with a single vertex.  Discussion
of twisted composition maps can be found in e.g. Pandharipande
\cite{pa:ra}.

\begin{theorem}  \label{ewg2}   {\rm (Equivariant twisted Gromov-Witten invariants
define a CohFT algebra)} Suppose that $X$ is a smooth projective
  $G$-variety  and $E \to X$ is a $G$-equivariant vector bundle.
  The datum $(QH_{G \times \C^\times}(X,\Q), (\mu^{g,n}_{E})_{g,n \ge
    0} )$ form a CohFT algebra, denoted $QH_G(X,E)$.
\end{theorem}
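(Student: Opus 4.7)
The plan is to mimic the proof of the untwisted CohFT algebra statement (Theorem preceding \ref{tracethm}), modifying the inner product to account for the twisting class. First I would introduce on $QH_{G \times \C^\times}(X,\Q)$ the twisted inner product
$$ (\alpha,\beta)_E \;:=\; \int_X \alpha \cup \beta \cup \Eul_{G\times\C^\times}(E), $$
which is non-degenerate after inverting the equivariant parameter $\varphi$ since $\Eul_{G\times\C^\times}(E)$ becomes invertible in the localized ring $H_G(X,\Q)\otimes\Q[\varphi,\varphi^{-1}]$. With respect to this pairing, dual bases $(\delta_k),(\delta^k)$ of $QH_{G\times\C^\times}(X,\Q)$ satisfy $\sum_k \delta_k \cup \Eul_{G\times\C^\times}(E) \otimes \delta^k$ equal to the (twisted) diagonal class. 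This is precisely the pairing forced on us by the cutting-edge axiom of the preceding proposition, where the insertion $\delta_k \cup \Eul_{G\times\C^\times}(E)$ appears at one new marked point and $\delta^k$ at the other.

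Next I would verify the CohFT algebra axioms in the form given by Behrend--Manin \cite{bm:gw}, that is, compatibility of the operations $\mu^{g,n}_E$ under the three basic morphisms of modular graphs (cutting an edge, collapsing an edge, forgetting a tail). Each axiom reduces, after unwinding the definition of $\mu^{g,n}_E$ as a generating series over $d \in H_2(X)$ weighted by $q^d$, to the corresponding statement in the preceding proposition for the twisted invariants $\lan \cdot;\cdot\ran_{\Gamma,d,E}$. The cutting-edge identity, combined with the twisted duality described above, yields the gluing/associativity axiom for $\mu^{g,n}_E$; the collapsing-edges identity yields the compatibility with the psi-class/divisor structure coming from $\ovl{M}_{g,n+1}$; the forgetting-tails identity yields the divisor axiom for classes in $H^2_G(X)$ (absorbed into the $q^d$ grading, as in the untwisted case).

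The only subtlety, and the one place I would spend some care, is checking that the twisted duality pairing $(\cdot,\cdot)_E$ is indeed the correct one to match the cutting-edge identity: the Euler class insertion on a single side of the cut edge must be absorbed into one copy of the basis but not both, and this is consistent with the CohFT algebra identities precisely because normalization at the node in the excerpt's exact triangle
$$ 0 \to E \to p^* p_* E \to x_* x^* E \to 0 $$
contributes a single factor of $\Eul_{G\times\C^\times}(x^*e^* E)$ at the node, which equals the pull-back of $\Eul_{G\times\C^\times}(E)$ under the evaluation at the node. This single insertion is what distinguishes the twisted duality from the untwisted one and matches the statement of the cutting-edges axiom above.

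With these checks in place, the verification of the CohFT algebra axioms is purely formal and identical in structure to the untwisted proof, so the theorem follows. I expect the main (though mild) obstacle to be bookkeeping the $\Eul_{G\times\C^\times}(E)$ factors consistently across the cutting-edge axiom and the definition of $\mu^{g,n}_E$, which is why I would begin by fixing the twisted inner product explicitly before writing out any of the identities.
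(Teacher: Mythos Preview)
Your proposal is correct and matches the paper's approach: the paper states Theorem~\ref{ewg2} without proof, relying implicitly on the preceding proposition (the cutting-edges, collapsing-edges, and forgetting-tails axioms for the twisted invariants) exactly as you outline. Your identification of the twisted inner product $(\alpha,\beta)_E = \int_X \alpha\cup\beta\cup\Eul_{G\times\C^\times}(E)$ as the correct Frobenius pairing is the essential point, and it is consistent with the paper's definition of $\mu^{g,n}_E$, where the output is characterized via the ordinary pairing against $\alpha_0\cup\Eul_{G\times\C^\times}(E)$ rather than $\alpha_0$ alone.
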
  

%\begin{remark} Inserting the Euler class $\eps(E)$ into the definition of the graph
%invariants \eqref{graphinv} yields a twisted trace $\tau_{X,E}:
%QH_G(X,E) \to \Lambda_X$ generalizing Theorem \ref{tracethm}. \end{remark} 

\label{algggw}

\subsection{Gauged Gromov-Witten invariants} 

In this section we define gauged Gromov-Witten invariants.  As in
Behrend \cite{be:gw}, invariants are defined for any possibly
disconnected combinatorial type, and the splitting axiom can be broken
down into {\em cutting edges} and {\em collapsing edges} axiom.
However, the definition for disconnected type requires an additional
datum, of an assignment of each non-root component to a semi-infinite
edge of a root component.  The equivariant virtual classes for the
non-root components combine with the non-equivariant virtual classes
for the root component to a non-equivariant virtual class for moduli
space for disconnected type.

\begin{definition}  {\rm (Virtual fundamental classes for moduli stacks of gauged maps)} 
Let $X$ be a smooth projective $G$-variety. 
\begin{enumerate} 
\item {\rm (Combinatorial type with a single vertex)} We already
  remarked in \cite[Example 6.6]{qk2}
%\ref{perfobsex} 
that if $\ovl{\M}_n^G(C,X)$ is a Deligne-Mumford stack, then it has a
perfect obstruction theory, given by the dual of the derived
push-forward of the pull-back of the tangent complex $ (Rp_* e^*
T(X/G))^\dual$ where $p: \ovl{\cC}_n^G(X,d) \to \ovl{\M}_n^G(X,d)$ is
the universal curve, $e: \ovl{\cC}_n^G(X,d) \to X/G$ the universal
stable gauged map, and $T(X/G)$ the tangent complex to $X/G$.  Hence
one obtains a virtual fundamental class of expected dimension
$$[\ovl{\M}_n^G(C,X,d)] \in A(\ovl{\M}_n^G(C,X,d)) .$$
\item {\rm (Connected combinatorial type)} More generally, given any
  connected rooted tree $\Gamma$ we denote by
  $\ovl{\M}^G_{n,\Gamma}(C,X,d)$
  resp. $\ovl{\M}^{G,\fr}_{n,\Gamma}(C,X,d)$ the moduli stack of stable
  gauged maps resp. with framings at the markings of combinatorial
  type $\Gamma$ and class $d$.  Under the assumption that
  $\ovl{\M}_n^G(C,X,d)$ is Deligne-Mumford, the action of $G^n$ on
  $\ovl{\M}^{G,\fr}_{n,\Gamma}(C,X,d)$ is locally free.  The same
  construction gives a virtual fundamental class
$$ [\ovl{\M}^G_{n,\Gamma}(C,X,d)] \in A(\ovl{\M}^G_{n,\Gamma}(C,X,d))
\cong A^{G^n}(\ovl{\M}^{G,\fr}_{n,\Gamma}(C,X,d)).$$
\item {\rm (Disconnected combinatorial type)} Suppose $\Gamma =
  \Gamma_0 \cup \ldots \cup \Gamma_l$ with $\Gamma_0$ containing the
  root vertex is given the {\em additional datum} of a map from the
  non-root components to the root edges: Suppose that $\Gamma =
  \Gamma_0 \cup \Gamma_1 \cup \dots \cup \Gamma_l$ is a disconnected
  rooted $H_2^G(X)$-labelled graph such that $\Gamma_j$ has
  semi-infinite edges $I_j$, and for each $ j = 1,\dots, l$ is given a
  semi-infinite edge $e(j)$ of $\Gamma_0$.  We denote by
$$ \ovl{\M}^{G,f}_{n,\Gamma}(C,X,d) = 
( \ovl{\M}^{G,\fr}_{n_0,\Gamma_0}(C,X,d_0) \times \prod_{j=1}^l 
  \ovl{\M}_{0,n_j,\Gamma_j}(X,d_j)) / G^{n_0} $$
where the action of the $i$-th factor in $G^{n_0}$ acts at the $i$-th
framing on the principal component, and diagonally on the components
corresponding to $\Gamma_j$ with $e(j) = i$.  We have virtual
fundamental classes
$$ [\ovl{\M}^G_{n_0,\Gamma_0}(C,X,d_0)] \in
A(\ovl{\M}^G_{n_0,\Gamma_0}(C,X,d)) \cong
A_{G^{n_0}}(\ovl{\M}^{G,\fr}_{n_0,\Gamma_0}(C,X,d)) $$
$$
[\ovl{\M}_{0,n_j,\Gamma_j}(X,d_j)] \in
A_G(\ovl{\M}_{0,n_j,\Gamma_j}(X,d_j)) $$
and so a virtual fundamental class 
$$ [ \ovl{\M}^{G}_{n,\Gamma}(C,X,d)] = \cup_{d = d_0 + \ldots + d_l} [
  \ovl{\M}^{G,\fr}_{n_0,\Gamma_0}(C,X,d_0)] \times \prod_{j=1}^l [
  \ovl{\M}_{0,n_j,\Gamma_j}(X,d_j)] $$
in 
$$ A_{G^{n_0}}(\ovl{\M}^{G,\fr}_{n_0,\Gamma_0}(C,X,d) \times \prod_j
\ovl{\M}_{0,n_j}(X,d_j)) \cong A(\ovl{\M}^{G,f}_{n,\Gamma}(C,X,d)) .$$
\end{enumerate}
\end{definition}

These classes satisfy the following properties similar to those in
\cite{be:gw}:

\begin{proposition} \label{gaugedvfcprops}
\begin{enumerate} 
\label{constantproducts}
\item {\rm (Constant maps)} If $ d =0 $ and $\on{genus}(C) = 0 $ then
  $\ovl{\M}^G_{\Gamma,n}(C,X,d) = (X \qu G) \times
  \ovl{\M}_{\Gamma,n}(C)$ and $[\ovl{\M}^G_{n,\Gamma}(C,X,d)] = [X \qu G
    \times \ovl{\M}_{n,\Gamma}(C)]$.
\item {\rm (Cutting edges)} If $\Gamma'$ is obtained from $\Gamma$ by
  cutting an edge then (with the obvious labelling of the additional
  component)
$ [\ovl{\M}^G_{n,\Gamma}(C,X,d)] = \Delta^!
  [\ovl{\M}^{G}_{n+2,\Gamma'}(C,X,d)]. $
\item {\rm (Collapsing edges)} If $\Ups: \Gamma' \to \Gamma$ is a
  morphism collapsing an edge then $ \ovl{\M}(\Ups)^!
  [\ovl{\M}^G_{n,\Gamma}(C,X,d)]$ is the push-forward of $\sum_{d'
    \mapsto d} [\ovl{\M}^G_{n,\Gamma'}(C,X,d')] $ under 
$$\ovl{\cF}(\Ups,X): \ovl{\M}^G_{n,\Gamma'}(C,X,d') \to
  \ovl{\M}_{n,s(\Gamma')}(C) \times_{\ovl{\M}_{n,s(\Gamma)}(C)}
  \ovl{\M}^G_{n,\Gamma}(C,X,d) .$$
\item {\rm (Forgetting tails)} If $\Ups: \Gamma \to \Gamma'$ is a
  morphism forgetting a tail then
$$ \ovl{\M}(\Ups)^! [\ovl{\M}_{n,\Gamma'}^G(C,X,d)] =
  [\ovl{\M}^G_{n+1,\Gamma}(C,X,d)] .$$
\end{enumerate} 
\end{proposition}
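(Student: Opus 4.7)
The plan is to adapt the Behrend--Fantechi argument recalled in Examples \ref{cutting} and \ref{collapsing} to the gauged setting, systematically replacing the tangent bundle $TX$ of a smooth target by the tangent complex $T(X/G)$ of the quotient stack, with obstruction theory $(Rp_* e^* T(X/G))^\dual$ as described in the definition preceding the statement. Since all four axioms are local statements about how the perfect obstruction theory behaves under pullback, pushforward, or restriction of the universal curve, each follows from a corresponding statement about the derived pushforward of $e^* T(X/G)$ plus the compatibility criterion of Behrend--Fantechi \cite[7.5]{bf:in}.

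For the constant maps axiom (a), a class-zero gauged map in genus zero has trivial principal $G$-bundle and constant section landing in the semistable locus, giving the stated identification with $(X \qu G) \times \ovl{\M}_{n,\Gamma}(C)$. The complex $Rp_* e^* T(X/G)$ has vanishing $R^1$ since $C$ has genus zero and $e$ is constant, so the relative obstruction theory is concentrated in degree zero and the virtual fundamental class agrees with the ordinary one. For the cutting axiom (b), I would reproduce Example \ref{cutting} verbatim with $TX$ replaced by $T(X/G)$: normalizing $\cC$ at the node to obtain $\cC''$ over $\ovl{\M}^G_{n+2,\Gamma'}(C,X,d)$ yields the short exact sequence $0 \to e^* T(X/G) \to p_* p^* e^* T(X/G) \to x^* e^* T(X/G) \to 0$ and therefore an exact triangle of derived pushforwards which provides the compatibility datum over the diagonal $\Delta$. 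For the collapsing axiom (c), I would copy Example \ref{collapsing}: the stack of prestable gauged maps of type $\Gamma'$ includes into that of type $\Gamma$, the two relative obstruction theories are identified by the $Rp_* e^* T(X/G)$ description, and the commutative diagram of fiber products together with bivariant Chow theory for representable morphisms of Artin stacks yields the pushforward formula. The forgetting-tails axiom (d) follows since marked points make no contribution to the obstruction complex, so the complex, and hence the virtual class, pulls back under stabilization.

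The main obstacle is the disconnected combinatorial type. Here the virtual class $[\ovl{\M}^G_{n,\Gamma}(C,X,d)]$ is assembled from an equivariant virtual class on each non-root component and a non-equivariant virtual class on the root, quotiented by the $G^{n_0}$-action at the framings according to the attachment data $j \mapsto e(j)$. One has to check that each axiom interacts correctly with this mixed structure: cutting an edge may lie entirely in the root, entirely in a non-root subtree, or straddle the two via the attachment datum, and the last case is the delicate one since it mixes equivariant and non-equivariant diagonal pullbacks. I would handle this by reducing to the connected case factor by factor: the framings trivialize the $G$-torsor at the attachment markings, so the diagonal on $X$ appears on both sides and compatibility of obstruction theories is inherited from the connected single-vertex case via the product structure of $Rp_* e^* T(X/G)$ on a disjoint union of universal curves. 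Collapsing and forgetting-tails preserve the attachment data and reduce componentwise; once the cutting axiom is verified in the mixed case, the rest is bookkeeping of the $G^{n_0}$-quotient.
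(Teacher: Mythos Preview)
Your approach is essentially the paper's: replace $TX$ by the tangent complex $T(X/G)$ in the Behrend--Fantechi argument of Examples \ref{cutting} and \ref{collapsing}, and verify compatibility of obstruction theories over the diagonal $\Delta: X/G \to (X/G)^2$. The paper in fact only writes out (b) in detail (via exactly the normalization short exact sequence and exact triangle you describe, noting that $T_{X/G}$ is a two-term complex) and a few lines for (c), leaving (a), (d) to the reader as in Behrend.

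One small point of emphasis: you treat the disconnected combinatorial type as ``the main obstacle,'' but the paper does not. By definition the virtual class for a disconnected type is a product of equivariant classes on the non-root factors with the non-equivariant class on the root factor, quotiented by $G^{n_0}$; cutting an edge in the root tree simply produces a new non-root component with the obvious attachment label, and the diagonal pullback identity then follows from the connected case together with the product structure. Your worry about an edge ``straddling'' root and non-root does not arise, since the attachment datum $j \mapsto e(j)$ is a labelling of components by semi-infinite edges, not an actual edge of $\Gamma$ that could be cut. So your factor-by-factor reduction is correct but the delicate mixed case you anticipate is vacuous. Also, in your short exact sequence the map you call $p$ should be the normalization map $\cC'' \to \cC$, not the universal curve projection; keep the two $p$'s separate.
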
 

\begin{proof}  The items  (a), (b) and (d) are similar to the 
ordinary Gromov-Witten case considered in Behrend \cite{be:gw} and
left to the reader.  For a morphism $\Ups$ cutting an edge for gauged
maps, recall from \cite[Proposition 5.21]{qk2}
%\ref{cutcollapse2} 
that
$\ovl{\M}^{G}_{n,\Gamma}(C,X)$ may be identified with the fiber product
$\ovl{\M}^{G}_{n,\Gamma'}(C,X) \times_{(X/G)^2} (X/G)$ over the
diagonal $\Delta: (X/G) \to (X/G)^2$.  We denote by
$$\ovl{\M}(\Upsilon,X): \ovl{\M}^{G}_{n,\Gamma}(C,X) \to
\ovl{\M}^{G}_{n,\Gamma'}(C,X)$$ 
the resulting morphism.  We check that the obstruction theories
$E_{\Gamma'}$ and $E_{\Gamma}$ are compatible over $\Delta$.  Let
$\cC$ denote the universal curve over $\ovl{\M}^{G}_{n,\Gamma}(C,X)$,
similarly for $\cC'$ and $\Gamma'$.  Let $\cC'' = \M(\Upsilon,X)^*
\cC'$ be the curve over $\ovl{\M}^{G}_{n,\Gamma'}(C,X)$ obtained by
normalizing at the node corresponding to the edge, with $f: \cC'' \to
\cC$ the projection and $ e'': \cC'' \to X/G, \quad e: \cC \to X/G $
the universal maps.  So $\cC$ is obtained from $\cC''$ by identifying
the two sections $x_1,x_2$ of $\cC''$, and is equipped with a section
$x$ induced from $x_1,x_2$.  The short exact sequence of complexes of
coherent sheaves
$$ 0 \to e^*
T_{X/G} \to f_* f^* e^* T_{X/G} \to x_* x^* e^* T_{X/G} \to 0 $$
(viewing $T_{X/G}$ as a two-term complex) induces an exact triangle in
the derived category
$$ R p_* e^* T_{X/G} \to R p''_* f^* e^* T_{X/G} \to x^* e^* T_{X/G} \to
R p_* e^* T_{X/G} [1] .$$
We have relative obstruction theories with complexes
$$ E_\Gamma := ( Rp_* e^{*} T_{X/G})^\dual, \quad
\ovl{\M}(\Upsilon,X)^* E_{\Gamma'} = (R p_*'' e^{'',*} T_{X/G})^\dual =
(R p_*'' f^* e^* T_{X/G})^\dual .$$
Note that $(x^* e^* T_{X/G})^\dual = \Psi^* L_\Delta$, where $\Delta :
(X/G) \to (X/G)^2$ is the diagonal and $\Psi$ is evaluation at the
node.  We have an exact triangle
$$ \Psi^* L_\Delta [-1] \to \ovl{\M}(\Upsilon,X)^* E_{\Gamma'} \to
E_{\Gamma} \to \Psi^* L_\Delta .$$
This gives rise to a morphism of exact triangles as in Example \ref{cutting}. 
%
%
%$$ \tiny \begin{diagram} \node{\ovl{\M}(\Upsilon,X)^* E_{\Gamma'}}
%  \arrow{e} \arrow{s} \node{E_{\Gamma}} \arrow{e} \arrow{s} \node{e^*
%    L_\Delta} \arrow{e} \arrow{s} 
%\node{\ovl{\M}(\Upsilon,X)^*    E_{\Gamma'}[1]}
% \arrow{s} 
%\node{\ldots}
%\\
%
%\node{\ovl{\M}(\Upsilon,X)^*
%  L_{\ovl{\M}^G_{n,\Gamma'}(C,X)/\ovl{\MM}_{n,\Gamma'}(C)}} \arrow{e}
%\node{L_{\ovl{\M}^G_{n,\Gamma}(C,X)/\ovl{\MM}_{n,\Gamma}(C)}}
%\arrow{e} \node{L_{\ovl{\M}(\Upsilon,X)}} \arrow{e}
%\node{\ldots}
%%\node{\ovl{\M}(\Upsilon,X)^*
%%  L_{\ovl{\M}^G_{n,\Gamma'}(C,X)/\ovl{\MM}_{n,\Gamma'}(C)}[1].} 
%\end{diagram} $$
%
\noindent 
By compatibility, the virtual fundamental classes are related by $
[\ovl{\M}^G_{n,\Gamma}(C,X)] = \Delta^!  [\ovl{\M}^{G}_{n+2,\Gamma'}(C,X)]
$.

For collapsing an edge for gauged maps, let $\Upsilon: \Gamma' \to
\Gamma$ be a morphism of rooted graphs given by {\em collapsing an
  edge}.  Associated to $\Upsilon$ are morphisms of Artin
resp. Deligne-Mumford stacks
$$ \ovl{\MM}(\Upsilon): \ovl{\MM}_{n,\Gamma'}(C) \to \ovl{\MM}_{n,\Gamma}(C),
\quad \ovl{\M}(\Upsilon): \ovl{\M}_{n,\Gamma'}(C) \to \ovl{\M}_{n,\Gamma}(C) .$$
The first is a regular local immersion, and so defines a class in the
bivariant Chow group $ [\ovl{\MM}(\Upsilon)] \in
A^\dual(\ovl{\MM}_{n,\Gamma'}(C) \to \ovl{\MM}_{n,\Gamma}(C)) .$ As in
Behrend \cite{be:gw}, the relative obstruction theories for
$\ovl{\M}^G_{n,\Gamma}(C,X), \ovl{\M}^G_{n,\Gamma'}(C,X)$ are related by
pull-back.  Following we have 
%Indeed consider the diagram analogous to
\cite[p. 15]{be:gw} 
%in Figure \ref{consider}.  
%%
%\begin{figure}[ht]
%$$\begin{diagram} 
%\node{ \sqcup_{d' \mapsto d} \ovl{\M}^G_{n,\Gamma'}(C,X,d')}
%\arrow{e} \arrow{s}  
%\node{\ovl{\M}_{n,\Gamma'}(C) \times_{\ovl{\M}_{n,\Gamma}(C)} 
%\ovl{\M}^G_{n,\Gamma}(C,X,d)} \arrow{s} \arrow{e}
%\node{\ovl{\M}^G_{n,\Gamma}(C,X,d)} \arrow{s} \\ 
%\node{ \ovl{\MM}_{n,\Gamma'}(C)} \arrow{e} \arrow{se} 
%\node{ \ovl{\M}_{n,\Gamma'}(C) \times_{\ovl{\M}_{n,\Gamma}(C)}
%\ovl{\MM}_{n,\Gamma}(C)} \arrow{s} \arrow{e}
%\node{\ovl{\MM}_{n,\Gamma}(C)} \arrow{s} \\
%\node{} \node{\ovl{\M}_{n,\Gamma'}(C)} \arrow{e} \node{\ovl{\M}_{n,\Gamma}(C)}. 
%\end{diagram} $$
%\caption{Diagram for collapsing an edge}
% \label{consider} 
%\end{figure}  
%
%because all the squares are Cartesian, it follows 
%as in \cite{be:gw}
%that
%
$$
\ovl{\MM}(\Upsilon)^!  [\ovl{\M}^G_{n,\Gamma}(C,X,d)] = 
\ovl{\cF}(\Ups,X)_* \sum_{d'
  \mapsto d} [\ovl{\M}^G_{n,\Gamma'}(C,X,d')] $$ 
as claimed.
\end{proof} 

We now pass to homology/cohomology.  (One could also consider the
quantum Chow ring etc.)  Pairing with the virtual fundamental class
gives a map
$$\int_{[\ovl{\M}_n^G(C,X)]}: H(\ovl{\M}_n^G(C,X),\Q) \to \Q .$$
Evaluation at the marked points gives a morphism
$$ \ev: \ovl{\M}_n^G(C,X) \to (X/G)^n, \quad
(P,\hat{C},u,z_1,\ldots,z_n) \mapsto ( z_j^* P, u \circ z_j)_{j=1}^n.$$
Forgetting the bundle and curve and collapsing any unstable components
defines a forgetful morphism from \cite[Corollary 5.19]{qk2}
%\ref{gaugedforget} 
$ f:
\ovl{\M}_n^G(C,X) \to \ovl{\M}_n(C) .$

\begin{definition}  {\rm (Gauged Gromov-Witten invariants of a given combinatorial type)} 
\begin{enumerate} 
\item {\rm (Invariants for a tree with a single vertex)}
The {\em gauged Gromov-Witten invariants} associated
to $X$ are the maps
$$ H_G(X,\Q)^{n} \times H(\ovl{\M}_n(C),\Q) \to \Q, \quad
  (\alpha, \beta) \mapsto \lan \alpha, \beta \ran_{d} $$
$$ \lan \alpha; \beta \ran_{d} := \int_{[\ovl{\M}^G_{n}(C,X,d)]} \ev^*
\alpha \cup f^* \beta .$$
\item {\rm (Invariants for a connected tree)} The invariant for a {\em
  connected} rooted $H_2^G(X)$-labelled tree $\Gamma$ and
  $G$-equivariant vector bundle $E \to X$ is the integral $\lan
  \alpha, \beta \ran_{E,\Gamma,d} \in \Q$ of $\ev^* \alpha \cup f^*
  \beta \cup \eps(E)$ over the moduli stack
  $\ovl{\M}^G_{n,\Gamma}(C,X)$ of stable gauged maps of combinatorial
  type $\Gamma$.
\item {\rm (Invariants for forests)} Invariants for possibly
  $H_2^G(X)$-labelled rooted forests are defined as follows, given
  the {\em additional datum} of a map from the non-root components to
  the root edges: Suppose that $\Gamma = \Gamma_0 \cup \Gamma_1 \cup
  \dots \cup \Gamma_l$ is a rooted $H_2^G(X)$-labelled forest such
  that each tree $\Gamma_j$ has semi-infinite edges $I_j$, and for
  each $ j = 1,\dots, l$ is given a semi-infinite edge $e(j)$ of
  $\Gamma_0$.  We define gauged Gromov-Witten invariants for $\Gamma$
  by fiber integration over the map $ \ovl{\M}_{n,\Gamma}^G(C,X) \to
  \ovl{\M}_{n_0,\Gamma_0}^G(C,X) $ whose fibers are moduli stack of
  stable maps of type $\Gamma_j$ for $j > 0$: set
$$ \lan \alpha ; \beta \ran_{\Gamma,d} := \lan (\alpha'_i)_{i \in I_0}
  ; \beta \ran_{\Gamma_0,d_0} $$
where for each semi-infinite edge $i$ of $\Gamma_0$
$$\alpha'_i = \left( \prod_{i = e(j)} \lan (\alpha_e)_{e \in I_j} ,
\beta_e \ran_{\Gamma_j,d_j} \right) \alpha_i ,$$
using the $H(BG)$-module structure on $H_G(X)$, and $\beta_j \in
H(\ovl{M}_{n_j,\Gamma_j}(C))$ is the component of $\beta$ in the
decomposition $\ovl{\M}_{n,\Gamma}(C) = \prod_j
\ovl{\M}_{n_j,\Gamma_j}(C)$.
\end{enumerate} 
\end{definition} 

\begin{remark}  It is not possible to define invariants for forests (as opposed
to trees) as purely a product over the tree components, since the
non-root components resp. root component defines invariants with
values in $H(BG) \otimes \Lambda_X^G$ resp. $\Lambda_X^G$.
\end{remark} 

 These invariants (with or without cohomological twisting) satisfy
 axioms for morphisms of rooted trees:
\begin{proposition} \label{alggaugedsplitprop} 
\begin{enumerate}
\item {\rm (Cutting edges)} 
If $\Gamma'$ is obtained from $\Gamma$ by cutting
an edge then 
$$ \lan \alpha ; \beta \ran_{\Gamma,d} =
\sum_{i=1}^{\dim(H(X))} \lan \alpha, \delta_i, \delta^i ; 
\ovl{\M}(\Ups)^* \beta \ran_{\Gamma',d}$$
where $\delta_i,\delta^i$ are dual bases for $H_G(X)$ over $H(BG)$;
\item {\rm (Collapsing edges)} 
If $\Ups: \Gamma \to \Gamma'$ is a morphism collapsing an edge
  then
$$ 
\lan \alpha ; \beta \cup \gamma 
\ran_{\Gamma',d'} =
  \sum_{d' \mapsto d} \lan \alpha ; 
\ovl{\M}(\Ups)^* 
\beta \ran_{\Gamma,d}$$
where $\gamma$ is the dual class for $\ovl{\M}(\Ups):
\ovl{\M}_{n,\Gamma}(C) \to \ovl{\M}_{n,\Gamma'}(C)$.
\item {\rm (Forgetting tails)}  If $\Ups: \Gamma \to \Gamma'$ is a morphism forgetting a tail
 then for $\alpha' \in H^2_G(X), \alpha \in H_G(X)^{n}$,
$$ \lan \alpha, \alpha' ; \ovl{\M}(\Ups)^* \beta \ran_{\Gamma,d} =
  (d, \alpha') \lan \alpha ; \beta \ran_{\Gamma',d'}$$
\end{enumerate} 
\end{proposition}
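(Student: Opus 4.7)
The plan is to derive each of the three identities by integrating the corresponding virtual-class identity from Proposition \ref{gaugedvfcprops} against the relevant pulled-back cohomology classes, exactly parallel to the way Proposition \ref{gwprops} is deduced from Proposition \ref{vfcprops} in the ordinary Gromov-Witten setting. The general formalism is the one of Behrend-Manin \cite{bm:gw}, where each axiom at the level of invariants is a formal consequence of the corresponding compatibility at the level of virtual classes together with projection formulas. It suffices to treat connected trees; the case of forests follows by iteration from the definition of $\lan \alpha ; \beta \ran_{\Gamma,d}$ as a composition of a tree invariant at the root component with stable-map invariants attached at the semi-infinite edges, using the $H(BG)$-linearity of the root invariant.

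For the cutting-edges axiom, I would start from the identity $[\ovl{\M}^G_{n,\Gamma}(C,X,d)] = \Delta^! [\ovl{\M}^G_{n+2,\Gamma'}(C,X,d)]$ of Proposition \ref{gaugedvfcprops}(b), where $\Delta: X/G \to (X/G)^2$ is the diagonal. Pushing forward under the embedding $\ovl{\M}(\Ups,X) : \ovl{\M}^G_{n,\Gamma}(C,X) \to \ovl{\M}^G_{n+2,\Gamma'}(C,X)$, applying the excess intersection formula, and using the Künneth decomposition of the diagonal class $[\Delta_{X/G}] = \sum_i \delta_i \otimes \delta^i$ in $H((X/G)^2) \cong H_G(X) \otimes_{H(BG)} H_G(X)$, the integral of $\ev^*\alpha \cup f^*\beta$ over $[\ovl{\M}^G_{n,\Gamma}(C,X,d)]$ is rewritten as the integral of $\ev^*(\alpha,\delta_i,\delta^i) \cup f^*\ovl{\M}(\Ups)^*\beta$ over $[\ovl{\M}^G_{n+2,\Gamma'}(C,X,d)]$, summed over $i$. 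This is the claimed identity.

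For the collapsing-edges axiom, I would start from Proposition \ref{gaugedvfcprops}(c), giving $\ovl{\M}(\Ups)^! [\ovl{\M}^G_{n,\Gamma'}(C,X,d')] = \ovl{\cF}(\Ups,X)_* \sum_{d \mapsto d'} [\ovl{\M}^G_{n,\Gamma}(C,X,d)]$. The class $\gamma \in H^2(\ovl{\M}_{n,\Gamma'}(C))$ is by definition the class whose cap product with $[\ovl{\M}_{n,\Gamma'}(C)]$ gives the push-forward $\ovl{\M}(\Ups)_*[\ovl{\M}_{n,\Gamma}(C)]$, so $\ovl{\M}(\Ups)^!(\cdot) = \ovl{\M}(\Ups)^*(\cdot) \cap \gamma$ on the level of cohomology pulled back from $\ovl{\M}_{n,\Gamma'}(C)$. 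Applying the projection formula to $f^*\beta \cup f^*\gamma$ under $\ovl{\cF}(\Ups,X)$ and using that $\ev$ commutes with $\ovl{\cF}(\Ups,X)$ on the relevant evaluation maps then yields the stated identity, where the sum over $d \mapsto d'$ comes from the decomposition of the moduli stack by degree.

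The main obstacle is the forgetting-tails axiom, which is the divisor axiom for the added marking and requires more than just pull-back of the virtual class in Proposition \ref{gaugedvfcprops}(d). Given $\alpha'\in H^2_G(X)$ at the forgotten marking, I would factor the forgetful morphism through the universal curve $\ovl{\cC}^G_n(C,X)$, so that fiber integration of $\ev_{n+1}^*\alpha'$ over the fibers of $\ovl{\cC}^G_n(C,X) \to \ovl{\M}^G_n(C,X)$ equals $\int_{\hat C} u^*\alpha' = (d,\alpha')$ pointwise. After correcting by boundary contributions from bubble components (which are absorbed into the $H(BG)$-linearity and correspond to compatible splittings into $d$ and trivial degrees), the projection formula applied to $\ovl{\F}(\Ups,X)$ yields the factor $(d,\alpha')$ and reduces the invariant to $\lan \alpha ; \beta \ran_{\Gamma',d'}$. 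This last step is the gauged counterpart of the computation carried out in the ordinary case in Behrend-Manin \cite{bm:gw}; the only new feature is that all pull-backs and integrals are performed equivariantly over $X/G$ rather than over $X$.
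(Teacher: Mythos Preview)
Your proposal is correct and follows exactly the approach the paper takes: the paper's proof consists of the single sentence ``By Proposition \ref{gaugedvfcprops} and the same arguments in the Gromov-Witten case, see Behrend-Manin \cite[Theorem 9.2]{bm:gw},'' and your write-up is precisely an expansion of that reference, deducing each axiom from the corresponding virtual-class identity via projection/excess formulas and the K\"unneth diagonal. Your treatment of the divisor axiom (factoring through the universal curve and integrating $\alpha'$ over fibers) is the standard Behrend--Manin argument the paper is pointing to.
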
 

\begin{proof} 
By Proposition \ref{gaugedvfcprops} and the same arguments in the
Gromov-Witten case, see Behrend-Manin \cite[Theorem 9.2]{bm:gw}.
\end{proof}

\begin{definition} 
Denote by $\Lambda_X^G$ the {\em equivariant Novikov field} for $X$,
the set of all maps $a: H_2^G(X):=H_2^G(X,\Q) \to \Q$ such that for
every constant $c$, the set of classes
$$ \left\{ d \in H_2^G(X), \ \lan [\omega_{X,G}], d \ran \leq c \right\} $$
on which $a$ is non-vanishing is finite.  Addition is defined in the
usual way and multiplication is convolution.
\end{definition} 

From now on, we denote by $QH_G(X) = H_G(X) \otimes \Lambda_X^G$ the
quantum cohomology over the Novikov field $\Lambda_X^G$.  Summing over
equivariant homology classes gives a map
$$ \tau^G_{X,n}: QH_G(X)^{n} \times H(\ovl{\M}_n(C)) \to \Lambda_X^G ,
\ \ \ (\alpha,\beta) \mapsto \sum_{d \in H_2^G(X,\Z)} q^d \lan \alpha
, \beta \ran_{d} .$$
By Proposition \ref{alggaugedsplitprop}, 

\begin{theorem}  \label{deftrace} 
If $\ovl{\M}_n^{G}(C,X)$ is a Deligne-Mumford
stack (that is, if stable=semistable) then the maps $(\tau^G_{X,n})_{
  n \ge 0}$ form a trace on the CohFT algebra $QH_G(X)$.
\end{theorem}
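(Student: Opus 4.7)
The plan is to derive the trace axioms directly from the splitting properties of Proposition \ref{alggaugedsplitprop}, following the same scheme by which Theorem \ref{tracethm} is obtained from Proposition \ref{gwprops}. The Deligne-Mumford hypothesis on $\ovl{\M}_n^G(C,X)$ (equivalently, stable $=$ semistable) is precisely what is needed to make the virtual fundamental classes and the invariants $\lan \alpha; \beta \ran_d$ well-defined as rational numbers, so that the formal sum $\tau^G_{X,n}(\alpha,\beta) = \sum_d q^d \lan \alpha;\beta\ran_d$ is meaningful. First I would verify convergence in the Novikov field: for any area bound $c$, only finitely many effective classes $d \in H_2^G(X,\Z)$ with $\lan [\omega_{X,G}], d \ran \le c$ support a nonempty moduli stack with nonzero virtual integral, so $\tau^G_{X,n}(\alpha,\beta) \in \Lambda_X^G$.

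Next I translate each CohFT-trace axiom into an identity among forest correlators. The key relation characterizing a trace says that whenever $\beta \in H(\ovl{\M}_n(C))$ is a boundary class corresponding to a tree of genus-zero stable maps bubbling off a root gauged curve at a marking, $\tau^G_{X,n}(\alpha;\beta)$ should factor as $\tau^G_{X,n_0}$ applied at the root after inserting the CohFT-algebra composition $\mu^n$ at the bubble inputs. Writing such a boundary stratum via a collapsing morphism $\Ups: \Gamma \to \Gamma'$, the collapsing-edges axiom rewrites $\lan \alpha; \beta \cup \gamma \ran_{\Gamma',d'}$ as $\sum_{d \mapsto d'} \lan \alpha; \ovl{\M}(\Ups)^* \beta \ran_{\Gamma,d}$, and then the cutting-edges axiom at each attaching node expands the correlator with a diagonal insertion $\sum_i \delta_i \otimes \delta^i$ in dual bases of $H_G(X)$ over $H(BG)$.

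Grouping Novikov weights according to the decomposition $d = d_0 + d_1 + \cdots + d_l$ into root and bubble-tree pieces, the bubble-tree sums $\sum_{d_j} q^{d_j}$ reassemble into the CohFT-algebra composition maps on $QH_G(X)$ (where the equivariant Gromov-Witten invariants live), while the root sum $\sum_{d_0} q^{d_0}$ produces $\tau^G_{X,n_0}$ applied to the equivariant inputs at the root. The dual-basis insertion produced by cutting is exactly the pairing used to assemble these composition maps, so the identification is tautological once the $H(BG)$-module structure is tracked correctly; the forgetting-tails axiom of Proposition \ref{alggaugedsplitprop} similarly gives the divisor-type relation needed for the trace.

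The main obstacle is the bookkeeping of the module structure over $H(BG)$: the root gauged correlator takes values in $\Lambda_X^G$, while the bubble-tree correlators take values in $H(BG) \otimes \Lambda_X^G$, and matching these via the $H(BG)$-linearity built into the forest-invariant definition is what guarantees that the cutting-edge dual-basis sum encodes the CohFT-algebra composition rather than something weaker. Once this matching is in place, the CohFT-trace axioms reduce term by term to the splitting identities of Proposition \ref{alggaugedsplitprop}.
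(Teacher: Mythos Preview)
Your proposal is correct and follows essentially the same approach as the paper, which simply states ``By Proposition \ref{alggaugedsplitprop}'' and leaves the details implicit. You have correctly identified that the trace axioms reduce to the cutting-edges, collapsing-edges, and forgetting-tails identities there, exactly paralleling how Theorem \ref{tracethm} follows from Proposition \ref{gwprops}; your additional remarks on Novikov convergence and the $H(BG)$-module bookkeeping are the expected unpacking of that one-line reference.
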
 

Twisted gauged Gromov-Witten invariants are defined as follows.

\begin{definition}  {\rm (Twisting class and twisted gauged invariants)} 
Let $E \to X$ be a $G$-equivariant complex vector bundle, inducing a
vector bundle on $X/G$.  Pull-back under the evaluation map $e:
\ovl{\cC}_n^G(C,X) \to X/G$ gives rise to a vector bundle $e^{*} E \to
\ovl{\cC}_{n}^G(C,X)$, which we can push down to an {\em index} complex
$$\Ind(E) := Rp_{*} e^* E$$ 
in the derived category of bounded complexes of coherent sheaves on
$\ovl{\M}_n^G(C,X)$.  As in \cite[Appendix]{co:qrr}, $\Ind(E)$ admits a
resolution by vector bundles and we may define the
$\C^\times$-equivariant Euler class
$$ \eps(E) := \Eul_{\C^\times}(\Ind(E)) \in
H_G(\ovl{\M}_n^G(C,X)) \otimes \Q[\varphi,\varphi^{-1}]$$
where $\varphi$ is the parameter for the action of $\C^\times$ by scalar
multiplication in the fibers.  The {\em twisted gauged Gromov-Witten
  invariants} associated to $E \to X, C$ are the maps
\begin{multline} H_G(X)^n \times H(\ovl{M}_n(C)) \to 
 \Q[\varphi,\varphi^{-1}], \\ (\alpha,\beta) \mapsto \lan \alpha , \beta \ran_{\Gamma,E,d} =
 \int_{[\ovl{\M}_n^G(C,X,d)]} \ev^* \alpha \cup f^* \beta \cup \eps(E) .\end{multline}
\end{definition} 

\begin{example}   
Recall that in the case that $X$ is a vector space, $G$ is a torus,
$\ovl{\M}^G(C,X)$ is the toric variety $X(d)$, see \cite[(31)]{qk2}.
%\eqref{Xd}.  
Integrals over toric varieties may be computed via
residues, as in for example Szenes-Vergne \cite{sv:tr}.  Some sample
computations are computed in Morrison-Plesser \cite[Section 4]{mp:si},
who made contact with the Gelfand-Kapranov-Zelevinsky theory of
hypergeometric functions.  We return to this case in Example
\ref{Ifunction}.
\end{example} 

%\begin{remark}  
%The gauged Gromov-Witten invariants are also defined for
%quasiprojective varieties $X$ under suitable convexity assumption as
%in Definition \eqref{convex} which guarantee that the moduli stack of
%gauged maps is proper.  For example, if $X$ is a vector space and $G$
%is a torus with weights $\mu_1,\ldots,\mu_N$ contained in a strictly
%convex cone, then we leave it to the reader to check that $X$ is a
%convex and the invariants are well-defined.  A more general strategy
%is to assume the presence of an additional group action for which the
%fixed point components are projective and define the gauged
%Gromov-Witten invariants via virtual localization \cite{gr:loc}; we
%leave this extension to the reader.
%\end{remark} 

 \section{Quantum Kirwan morphism and the adiabatic limit theorem} 

In this section we explain how to ``quantize'' the classical Kirwan
morphism in order to obtain a morphism of CohFT algebras to the
quantum cohomology of the quotient.  The existence of such a morphism
was noted under ``sufficiently positive'' conditions on the first
Chern class in Gaio-Salamon \cite{ga:gw}.  The quantum Kirwan morphism
relates small quantum cohomologies under suitable positivity
assumptions.  We also give a partial computation of the quantum Kirwan
map in the toric case.

\subsection{Affine gauged Gromov-Witten invariants} 

We first define gauged affine Gromov-Witten invariants by integrating
pull-back and universal classes over the moduli stack of affine gauged
maps.  As in Behrend \cite{be:gw}, we separate the splitting axiom
into a {\em cutting edges} and {\em collapsing edges} axiom.  The main
difference with Behrend \cite{be:gw} is that one cannot cut an
arbitrary edge and still have a colored tree if the edge separates
some of the colored vertices from the root edge and not others, so
there is a new {\em cutting edges with relations} axiom which cuts
several edges at once.  There is also a difference in the {\em
  collapsing edges} axiom: because the source moduli space
$\ovl{\M}_{n,1}(\bA)$ is not smooth, not every boundary divisor is
Cartier and so there is a new {\em collapsing edges with relations}
axiom which holds for combinations of boundary divisors that are
Cartier.  Let $X$ be a smooth polarized quasiprojective variety such
that the git quotient $X \qu G$ is a (necessarily smooth)
Deligne-Mumford stack.

\begin{definition} {\rm (Virtual fundamental classes for affine gauged maps)} 
\begin{enumerate} 
\item {\rm (Virtual fundamental class for a colored tree with a single
  vertex)} The construction in \cite[Chapter 7]{bf:in} gives a virtual
  fundamental class $[\ovl{\M}_{n,1}^G(\bA,X,d)] \in
  A(\ovl{\M}_{n,1}^G(\bA,X,d))$.  
\item {\rm (Virtual fundamental class for a connected colored tree)}
  More generally, for any combinatorial type of colored tree $\Gamma$
  we have a virtual fundamental class
$$[\ovl{\M}^G_{n,1,\Gamma}( \bA,X,d)] \in A(\ovl{\M}^G_{n,1,\Gamma}(\bA,X,d)) .$$
\item {\rm (Virtual fundamental class for a disconnected colored
  forest)} Suppose that $\Gamma = \Gamma_0 \cup \ldots \cup \Gamma_l$
  with $\Gamma_0$ a possibly disconnected union of components each
  with at least one colored vertex, and $\Gamma_1,\ldots,\Gamma_l$
  connected components with $\Ve(\Gamma_j) \subset \Ve^0(\Gamma)$.
  Suppose that for each component $\Gamma_j$ we are given a non-root
  edge $e(j)$ of $\Gamma_0$.  We denote by
$$ \ovl{\M}^G_{n,\Gamma}(\bA,X,d) := 
\cup_{d = d_0 + \ldots + d_l} 
 \left(  \ovl{\M}^{G,\fr}_{n_0,\Gamma_0}(\bA,X,d_0) \times_{G^{n_0}} \prod_{j=1}^l 
  \ovl{\M}_{0,n,\Gamma_j}(X,d_j) \right) $$
the fiber product determined by the mapping $e$ above.  We have
virtual fundamental classes
$$ [\ovl{\M}^G_{n_0,\Gamma_0}(\bA,X,d_0)] \in
A(\ovl{\M}^G_{n_0,\Gamma_0}(\bA,X,d)) \cong
A_{G^{n_0}}(\ovl{\M}^{G,\fr}_{n_0,\Gamma_0}(\bA,X,d)) $$
given the product of virtual fundamental classes of the components and
equivariant virtual fundamental classes
$$ [\ovl{\M}_{0,n_j,\Gamma_j}(X,d_j)] \in
A_G(\ovl{\M}_{0,n_j,\Gamma_j}(X,d_j)) .$$
These give a virtual fundamental class
$$ [ \ovl{\M}^G_{n,\Gamma}(\bA,X,d)] = 
\cup_{d = d_0 + \ldots + d_l} [
  \ovl{\M}^{G,\fr}_{n_0,\Gamma_0}(\bA,X,d_0)] \times \prod_{j=1}^l [
  \ovl{\M}_{0,n,\Gamma_j}(X,d_j)] $$
in 
$$ A_{G^{n_0}}(\ovl{\M}^{G,\fr}_{n_0,\Gamma_0}(\bA,X,d) \times \prod_j
\ovl{\M}_{0,n_j}(X,d_j)) \cong A(\ovl{\M}^{G}_{n,\Gamma}(\bA,X,d)) .$$
Note that it is not possible to define the virtual fundamental classes
without the additional labelling, since the virtual fundamental
classes for the components $\Gamma_j$ are equivariant while that for
$\Gamma_0$ is not.
\end{enumerate} 
\end{definition}

These classes satisfy the following properties:

\begin{proposition}  \label{affinevfcprops}
\begin{enumerate} 
\item {\rm (Collapsing edges)} If $\Gamma'$ is obtained from $\Gamma$
  by collapsing an edge and $\Ups: \Gamma \to \Gamma'$ is the
  corresponding morphism of colored trees then
$$ \ovl{\MM}(\Ups)^!  [\ovl{\M}^G_{n,1,\Gamma'}(\bA,X,d')] =
  \ovl{\cF}(\Ups,X)_* \sum_{d \mapsto d'}
      [\ovl{\M}^G_{n,1,\Gamma}(\bA,X,d)] $$
where 
$$\ovl{\cF}(\Ups,X): \ovl{\M}^G_{n,1,\Gamma}(\bA,X,d) \to
\ovl{\M}_{n,1,s(\Gamma)}(\bA) \times_{\ovl{\M}_{n,1,s(\Gamma')}(\bA)}
\ovl{\M}^G_{n,1,\Gamma'}(\bA,X,d') $$
is the identification with the fiber product; 
\item {\rm (Collapsing edges with relations)} If
  $\Gamma_0,\ldots,\Gamma_r$ are obtained from $\Gamma$ by collapsing
  edges with relations and $\Ups: \Gamma_0 \sqcup \ldots \sqcup
  \Gamma_r \to \Gamma$ is the corresponding morphism of colored trees
  so that $\cup_{i=1}^r \ovl{\MM}_{n,1,\Gamma_i}^{\tw}(\bA) \to
  \ovl{\MM}_{n,1,\Gamma'}^{\tw}(\bA)$ is a regular local immersion (that is,
  is a Cartier divisor) then
$$ \ovl{\MM}(\Ups)^!  [\ovl{\M}^G_{n,1,\Gamma}(\bA,X,d)] =
  \ovl{\cF}(\Ups,X)_* 
\sum_{d \mapsto d',i=1,\ldots,r}
     [\ovl{\M}^G_{n,1,\Gamma}(\bA,X,d)] .$$

\item {\rm (Cutting edges or edges with relations)} If $\Ups: \Gamma
  \to \Gamma'$ is a morphism of trees of type cutting an edge or
  edges with relations then
$$ \mathcal{G}(\Ups,X)_* [\ovl{\M}^G_{n,\Gamma'}(\bA,X,d')] = \Delta^!
  [\ovl{\M}^{G}_{n,\Gamma}(\bA,X,d)] $$
where $\Delta: \ovl{I}_{X/G}^m \to \ovl{I}_{X/G}^{2m} $ is the diagonal
and $\mathcal{G}(\Ups,X)$ is the gluing morphism in \cite[(32)]{qk2}.
%\eqref{glue2}.
%
\item {\rm (Forgetting tails)} If $\Ups: \Gamma \to \Gamma'$ is a
  morphism forgetting a tail then
$$ \ovl{\M}(\Ups)^! [\ovl{\M}_{n,\Gamma'}^G(\bA,X,d)] =
  [\ovl{\M}^G_{n+1,\Gamma}(\bA,X,d)] .$$
\end{enumerate} 
\end{proposition}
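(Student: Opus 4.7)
The plan is to follow the same pattern as in Propositions \ref{vfcprops} and \ref{gaugedvfcprops}, adapted to the combinatorics of colored trees. As in Behrend \cite{be:gw} and Behrend-Manin \cite{bm:gw}, each item reduces to a statement about compatibility of the perfect relative obstruction theories over the appropriate Artin stack of prestable twisted curves $\ovl{\MM}^{\tw}_{n,1,\Gamma}(\bA)$, combined with the bivariant Chow-theoretic machinery of \cite[Section 7]{bf:in}.

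First I would dispatch items (a), (c), (d) by direct analogues of the earlier arguments. For (a), the relative obstruction theory $(Rp_* e^* T_{X/G})^\dual$ pulls back under $\ovl{\MM}(\Ups)$, since a normalization of a $\Gamma$-type curve at the non-collapsed nodes is also a normalization of the associated $\Gamma'$-type curve; the argument of \cite[Proposition 8]{be:gw}, using bivariant Chow theory for representable morphisms of Artin stacks, then yields the stated identity after summing over lifts $d \mapsto d'$. For (c), I would construct a compatibility datum as in Example \ref{cutting} and the cutting-edge portion of Proposition \ref{gaugedvfcprops}: if $p: \cC'' \to \cC$ denotes the normalization at the cut nodes and $x$ the resulting evaluation section into $\ovl{I}_{X/G}^m$, then the short exact sequence
$$ 0 \to e^* T_{X/G} \to p_* p^* e^* T_{X/G} \to x_* x^* e^* T_{X/G} \to 0 $$
dualizes to give an exact triangle relating $\ovl{\M}(\Ups,X)^* E_{\Gamma}$ and $E_{\Gamma'}$ with $\Psi^* L_\Delta$, where $\Psi$ is evaluation at the cut nodes. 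Invoking \cite[7.5]{bf:in} together with the identification of $\ovl{\M}^G_{n,\Gamma'}(\bA,X,d')$ with the fiber product of $\ovl{\M}^G_{n+2m,\Gamma}(\bA,X,d)$ over the diagonal, via the gluing morphism $\mathcal{G}(\Ups,X)$ of \cite[(32)]{qk2}, gives the claim. Statement (d) is immediate: forgetting a tail and collapsing any resulting unstable components induces an isomorphism of universal curves away from the extra section, and hence identifies the pulled-back obstruction theory with the new one.

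The main obstacle is item (b), which has no direct analogue in the smooth-source settings of Propositions \ref{vfcprops} and \ref{gaugedvfcprops}. Because $\ovl{\MM}_{n,1}(\bA)$ is itself not smooth, individual boundary strata associated to collapsing a single edge need not be Cartier, and $\ovl{\MM}(\Ups)^!$ is not defined for a general collapsing morphism. The hypothesis of the proposition is precisely that $\cup_{i=1}^r \ovl{\MM}^{\tw}_{n,1,\Gamma_i}(\bA) \hookrightarrow \ovl{\MM}^{\tw}_{n,1,\Gamma'}(\bA)$ is a regular local immersion, which ensures a well-defined bivariant class. Granting this, the compatibility of obstruction theories from (a) extends component by component over each $\Gamma_i$; summing over $i$ and over labellings $d \mapsto d'$ compatible with each $\Gamma_i$ yields the desired identity. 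The most delicate step is verifying that the combined boundary divisor is indeed Cartier in the relevant combinatorial situations; this requires a case-by-case analysis of the colored-tree strata in $\ovl{\MM}^{\tw}_{n,1}(\bA)$, and is where I would expect to spend the most effort.
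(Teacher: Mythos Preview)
Your approach is essentially the same as the paper's, which likewise reduces each item to the Behrend--Fantechi compatibility machinery and points back to the projective gauged case (Proposition \ref{gaugedvfcprops}) and to Example \ref{collapsing}. The paper's proof is in fact even terser than yours: it omits (c) entirely as ``similar to the case of gauged maps from projective curves,'' sketches (a) via pull-back of obstruction theories exactly as you do, handles (b) by the same reference to Example \ref{collapsing} under the Cartier hypothesis, and leaves (d) to the reader.

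One minor correction to your discussion of (b): you describe ``verifying that the combined boundary divisor is indeed Cartier in the relevant combinatorial situations'' as the most delicate step, to be done by case analysis. But that condition is a \emph{hypothesis} of the proposition, not a conclusion --- the statement reads ``so that $\cup_{i=1}^r \ovl{\MM}_{n,1,\Gamma_i}^{\tw}(\bA) \to \ovl{\MM}_{n,1,\Gamma'}^{\tw}(\bA)$ is a regular local immersion \ldots\ then \ldots.'' So there is nothing to verify here; once the hypothesis is in force, $\ovl{\MM}(\Ups)^!$ is well-defined and the argument of Example \ref{collapsing} applies verbatim, just as you indicate in the sentence before. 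The case analysis of which collections of boundary strata are actually Cartier is a separate combinatorial question about $\ovl{\M}_{n,1}(\bA)$, addressed elsewhere (e.g.\ in the divisor class relations used for the adiabatic limit), not part of this proof.
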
 

\begin{proof}   Cutting an edge is similar to the case of gauged
maps from projective curves covered in Proposition
\ref{gaugedvfcprops} and omitted.  For collapsing an edge, Let $\Ups:
\Gamma' \to \Gamma$ be a morphism of edge-rooted colored trees given
by {\em collapsing an edge} 
%CW connected 
connecting vertices of the same color.
Associated to $\Ups$ are morphisms of Artin resp. Deligne-Mumford
stacks
$$ \ovl{\MM}(\Ups): \ovl{\MM}_{n,1,\Gamma'}^{\tw}(\bA) \to
\ovl{\MM}_{n,1,\Gamma}^{\tw}(\bA), \quad \ovl{\M}(\Ups):
\ovl{\M}_{n,1,\Gamma'}(\bA) \to \ovl{\M}_{n,1,\Gamma}(\bA) .$$
%
%The first is a regular local immersion, and so defines a class in the
%bivariant Chow group
%
%$$ [\ovl{\MM}(\Ups)] \in A^\dual(\ovl{\MM}_{n,1,\Gamma'}^{\tw}(\bA) \to
%%\ovl{\MM}_{n,1,\Gamma}^{\tw}(\bA)) .$$
%%
 As in Behrend \cite{be:gw}, the relative
 obstruction theories for $\ovl{\M}^G_{n,1,\Gamma}(\bA,X),
 \ovl{\M}^G_{n,1,\Gamma'}(\bA,X)$ are related by pull-back:
$$ \ovl{\MM}(\Ups)^!
[\ovl{\M}^G_{n,1,\Gamma'}(\bA,X,d')] =   \ovl{\cF}(\Ups,X)_* 
\sum_{d
    \mapsto d'} [\ovl{\M}^G_{n,1,\Gamma}(\bA,X,d)] .$$
%
%Indeed consider the diagram analogous to \cite[p. 15]{be:gw}
%%
%$$ \begin{diagram} 
%\node{ \sqcup_{d' \to d} 
%\ovl{\M}^G_{n,1,\Gamma'}(\bA,X,d')}
%\arrow{e} \arrow{s}  
%\node{\ovl{\M}_{n,1,\Gamma'}(\bA) \times_{\ovl{\M}_{n,1,\Gamma}(\bA)} 
%\ovl{\M}^G_{n,1,\Gamma}(\bA,X,d)} \arrow{s} \arrow{e}
%\node{\ovl{\M}^G_{n,1,\Gamma}(\bA,X,d)} \arrow{s} \\ 
%\node{ \ovl{\MM}^{\tw}_{n,1,\Gamma'}(\bA)} \arrow{e} \arrow{se} 
%\node{ \ovl{\M}^{\tw}_{n,1,\Gamma'}(\bA) \times_{\ovl{\M}_{n,1,\Gamma}(\bA)}
%\ovl{\MM}^{\tw}_{n,1,\Gamma}(\bA)} \arrow{s} \arrow{e}
%\node{\ovl{\MM}^{\tw}_{n,1,\Gamma}(\bA)} \arrow{s} \\
%\node{} \node{\ovl{\M}_{n,1,\Gamma'}^{\tw}(\bA)} \arrow{e} \node{\ovl{\M}^{\tw}_{n,1,\Gamma}(\b%A)} 
%\end{diagram} 
%$$ 
%%
%Because all the squares are Cartesian, 
%It follows as in \cite{be:gw} that
%%
%\begin{equation} \label{astar}
% \ovl{\MM}(\Ups)^!  [\ovl{\M}^G_{n,1,\Gamma'}(\bA,X,d')] =
% \ovl{\M}(\Ups,X)_* \sum_{d \mapsto d'} [\ovl{\M}^G_{n,1,\Gamma}(\bA,X,d)]
% .\end{equation}
%%
For collapsing several edges, let $\Gamma_0,\ldots,\Gamma_r$ be
colored trees obtained from $\Gamma$ by collapsing edges by morphisms
$\Ups_1,\ldots, \Ups_r$ so that $\cup_{i=1}^r
\ovl{\MM}^{\tw}_{n,1,\Gamma_i}(\bA) \to
\ovl{\MM}_{n,1,\Gamma'}^{\tw}(\bA)$ is a regular local immersion (that
is, is a Cartier divisor).  Then
$$ \ovl{\MM}(\Ups)^!
[\ovl{\M}^G_{n,1,\Gamma'}(\bA,X,d')] = 
  \ovl{\cF}(\Ups,X)_*  \sum_{d
    \mapsto d'} [\ovl{\M}^G_{n,1,\Gamma}(\bA,X,d)] $$
as in Example \ref{collapsing}.  The last item is left to the reader.
\end{proof} 

To define invariants, note that evaluation at the marked points
defines a map
$$ \ev \times \ev_\infty: \ovl{\M}_{n,1}^G(\bA,X) \to (X/G)^n \times
\ovl{I}_{X \qu G} .$$
By integration over the moduli stacks of affine gauged maps we obtain
affine gauged Gromov-Witten invariants defining the quantum Kirwan
morphism of CohFT algebras from $QH_G(X)$ to $QH(X \qu G)$.

\begin{definition} {\rm (Affine gauged Gromov-Witten invariants)} 
\begin{enumerate} 
\item {\rm (Invariants for a connected colored tree)} The {\em affine
  gauged Gromov-Witten invariants} for a connected colored tree
  $\Gamma$ are the maps
\begin{multline} 
 H_G(X)^n \times H(X \qu G) \times H(\ovl{M}_{n,1}(\bA)) \to \Q, \\
 (\alpha,\alpha_\infty,\beta) \mapsto \lan \alpha; \alpha_\infty; \beta
 \ran_{\Gamma,d} := \int_{[\ovl{\M}_{n,1,\Gamma}^G(\bA,X,d)]} \ev^*
 \alpha \cup f^* \beta \cup \ev^*_\infty \alpha_\infty .\end{multline}
\item {\rm (Invariants for a colored forest)} Invariants for possibly
  disconnected $H_2^G(X)$-labelled colored forests are defined as
  follows, given the {\em additional datum} of a map from the non-root
  components to the root edges: Suppose that $\Gamma = \Gamma_0 \cup
  \Gamma_1 \cup \dots \cup \Gamma_l$ is a disconnected colored
  $H_2^G(X)$-labelled tree such that each component of $\Gamma_0$
  has at least one vertex in $\Ve^0(\Gamma)$ or $\Ve^1(\Gamma)$, for
  $j > 1$ the tree $\Gamma_j$ has semi-infinite edges labelled $I_j$,
  and for each $ j = 1,\dots, l$ is given a semi-infinite edge $e(j)$
  of $\Gamma_0$.  Let $ \Edge(\Gamma) = \Edge^0(\Gamma) \cup
  \Edge^\infty(\Gamma) $ denote the partition corresponding to nodes
  mapping to $X/G$ or $I_{X \qu G}$, that is, edges connecting
  $\Ve^0(\Gamma)$ with $\Ve^0(\Gamma) \cup \Ve^1(\Gamma)$ or edges
  connecting $\Ve^1(\Gamma) \cup \Ve^\infty(\Gamma)$ with
  $\Ve^\infty(\Gamma)$ as in 
\cite[Remark 2.25]{qk1}. 
% \ref{edgepartition}. 
 We suppose that we have a labelling of the semi-infinite edges by
 classes $ \alpha_e \in H_G(X), e \in \Edge^0(\Gamma)$ and $ \alpha_e
 \in H(I_{X \qu G}), e \in \Edge^\infty(\Gamma)$.  We define gauged
 Gromov-Witten invariants for $\Gamma$ by fiber integration over the
 map $ \ovl{\M}_{n,\Gamma}^G(\bA,X) \to \ovl{\M}_{n_0,\Gamma_0}^G(\bA,X)
 $ whose fibers are moduli stacks of stable maps of type $\Gamma_j$
 for $j > 0$: set
$$ \lan \alpha ; \beta \ran_{\Gamma,d} := \lan (\alpha'_j)_{j \in I_0}
  ; \beta_0 \ran_{\Gamma_0,d_0} $$
where for each semi-infinite edge $i$ of $\Gamma_0$ connecting to a
vertex in $\Ve^0(\Gamma_0)$ or $\Ve^1(\Gamma_0)$,
$$\alpha'_i = \left( \prod_{i = e(j)} \lan (\alpha_e)_{e \in I_j} ,
\beta_j \ran_{\Gamma_j,d_j} \right) \alpha_i ,$$
using the $H(BG)$-module structure on $H_G(X)$, and $\beta_j$ is the
K\"unneth component of $\beta$ for the component $\Gamma_j$.
\item {\rm (Twisted affine Gromov-Witten invariants)} Twisted
  invariants $\lan \alpha ; \beta \ran_{\Gamma,d,E}$ associated to
  $G$-equivariant vector bundles $E \to X$ are defined by inserting
  Euler classes of indices $\eps(E)$ into the integrands.
\end{enumerate} 
\end{definition}

The properties of the affine Gromov-Witten invariants are similar to
those for the projective case:

\begin{proposition} \label{affinegwprops}
\begin{enumerate} 
\item {\rm (Collapsing an edge)} If $\Gamma'$ is obtained from
  $\Gamma$ by collapsing an edge then for any labelling $d'$ of
  $\Gamma'$, and $\ovl{\M}(\Ups): \ovl{\M}^{\tw}_{n,1,\Gamma}(\bA) \to
  \ovl{\M}^{\tw}_{n,1,\Gamma'}(\bA)$ has dual class $\gamma$ then
$$ \langle \alpha ; \beta \cup \gamma \rangle_{\Gamma',d',E} = \sum_{d
    \mapsto d'} \langle \alpha ; \ovl{\M}(\Ups)^* \beta
  \rangle_{\Gamma,d,E} $$
\item {\rm (Collapsing edges with relations)} More generally, if
  $\Gamma_0,\ldots,\Gamma_r$ are each obtained from $\Gamma$ by
  collapsing edges with relations and $\Ups: \Gamma_0 \sqcup \ldots
  \sqcup \Gamma_r \to \Gamma$ is the corresponding morphism of colored
  trees so that
\begin{equation} \label{immerse}
\cup_{i=1}^r \ovl{\M}_{n,1,\Gamma_i}^{\tw}(\bA) \to
  \ovl{\M}_{n,1,\Gamma'}^{\tw}(\bA)\end{equation} 
is a regular local immersion (that is, is a Cartier divisor) with dual
class $\gamma$ then
$$ \langle \alpha ; \beta \cup \gamma \rangle_{\Gamma',d',E} = \sum_{d
    \mapsto d',i=1,\ldots,r} \langle \alpha ;
  \iota_{\Gamma_i,\Gamma}^* \beta \rangle_{\Gamma_i,d,E} $$
where $\iota_{\Gamma_i,\Gamma}^*$ are the components of
\eqref{immerse}.
\item {\rm (Cutting an edge)} If $\Gamma'$ is obtained from $\Gamma$
  by cutting an edge or edges with relations then
$$ \langle \alpha ; \beta \rangle_{\Gamma,d,E} = \sum_k \langle
  \alpha, \delta_k \cup \Eul_{G \times \C^\times}(E), \delta^k; \ovl{\M}(\Ups)^* \beta \rangle_{\Gamma',d,E} $$
where $(\delta_k) ,(\delta^k), {k=1, \ldots, \dim(H(I^m_{X\qu G}))}$
are dual bases for $H(I_{X \qu G}^m)$ resp.  $H_G(X)$ if the cut edges
lie in $\Edge^\infty(\Gamma)$ resp.  $\Edge^0(\Gamma)$.
\item {\rm (Forgetting tails)} If $\Ups: \Gamma \to \Gamma'$ is a
  morphism forgetting a tail then
$$ \lan \alpha, \alpha' ; \ovl{\M}(\Ups)^* \beta \ran_{\Gamma,d,E} =
  (d, \alpha') \lan \alpha ; \beta \ran_{\Gamma',d,E}$$
where $(d,\alpha')$ is the pairing between $d \in H^G_2(X,\Q)$ and
$\alpha' \in H^2_G(X,\Q)$.
\end{enumerate} 
\end{proposition}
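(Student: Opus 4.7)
The plan is to reduce each of the four axioms to the corresponding statement about virtual fundamental classes from Proposition \ref{affinevfcprops}, combined with the projection formula and the definition of the invariant $\lan \alpha, \alpha_\infty ; \beta \ran_{\Gamma,d,E}$ as an integral of $\ev^*\alpha \cup f^*\beta \cup \ev_\infty^*\alpha_\infty \cup \eps(E)$ against $[\ovl{\M}^G_{n,1,\Gamma}(\bA,X,d)]$. The strategy parallels Behrend--Manin \cite[Theorem 9.2]{bm:gw} and the proof of Proposition \ref{alggaugedsplitprop} in the projective gauged case.

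For the collapsing-edge axiom (a), I would use Proposition \ref{affinevfcprops}(a) to rewrite $\ovl{\MM}(\Ups)^![\ovl{\M}^G_{n,1,\Gamma'}(\bA,X,d')]$ as the $\ovl{\cF}(\Ups,X)$-pushforward of $\sum_{d\mapsto d'}[\ovl{\M}^G_{n,1,\Gamma}(\bA,X,d)]$; representing the divisor class $\gamma$ as $\ovl{\M}(\Ups)_* 1$ and applying the projection formula for $f$, together with the fact that both the evaluation classes and the twisting Euler class $\eps(E)$ pull back compatibly under the forgetful morphism, yields the claimed identity. Part (b) is handled by the same argument with Proposition \ref{affinevfcprops}(b) in place of (a): the Cartier hypothesis on \eqref{immerse} is precisely what guarantees that $\gamma$ decomposes as a sum of classes Poincar\'e-dual to the individual strata $\iota_{\Gamma_i,\Gamma}$, so that the projection formula can be applied component by component and summed.

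The cutting-edge axiom (c) follows from Proposition \ref{affinevfcprops}(c): the identity $\mathcal{G}(\Ups,X)_* [\ovl{\M}^G_{n,\Gamma'}(\bA,X,d')] = \Delta^![\ovl{\M}^G_{n+2m,\Gamma}(\bA,X,d)]$ converts the integral for $\Gamma'$ into an integral for $\Gamma$ against the diagonal, and a K\"unneth expansion of the class Poincar\'e-dual to $\Delta$ as $\sum_k \delta_k \otimes \delta^k$---using the rescaled inner product on $H(\ovl{I}_{X\qu G})$ for edges in $\Edge^\infty(\Gamma)$ as in \eqref{orbsplit}, and the ordinary pairing on $H_G(X)$ over $H(BG)$ for edges in $\Edge^0(\Gamma)$---produces the stated dual-basis sum. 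Axiom (d) is then immediate from Proposition \ref{affinevfcprops}(d) combined with the standard divisor-equation computation: pushing forward along $\ovl{\M}(\Ups,X)$ converts the extra factor $\ev^*\alpha'$ into multiplication by $(d,\alpha')$.

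The main obstacle will be part (b): one must identify exactly which unions of boundary strata of $\ovl{\MM}^{\tw}_{n,1,\Gamma'}(\bA)$ assemble into a Cartier divisor, since the non-smoothness of $\ovl{\M}_{n,1}(\bA)$ means that individual boundary strata need not be Cartier on their own. Once the admissible collections $\Gamma_1 \sqcup \cdots \sqcup \Gamma_r$ are identified so that \eqref{immerse} is a regular local immersion, the decomposition of the associated dual class and the subsequent projection-formula manipulations are formal, and the remaining bookkeeping is parallel to the projective case already treated in Proposition \ref{alggaugedsplitprop}.
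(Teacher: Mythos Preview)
Your approach is essentially the same as the paper's: reduce each axiom to the corresponding virtual-class identity in Proposition \ref{affinevfcprops} and then apply the projection formula and K\"unneth/diagonal expansion, exactly parallel to Behrend--Manin and Proposition \ref{alggaugedsplitprop}. The paper's own proof is a one-line reference to Proposition \ref{affinevfcprops}, singling out only the cutting-edges case as requiring the fiber integration $\ovl{\M}_{n,\Gamma}^G(\bA,X) \to \ovl{\M}_{n_0,\Gamma_0}^G(\bA,X)$ over the non-root components $\prod_{j>0}\ovl{\M}_{0,n_j,\Gamma_j}(X)$, and leaving the rest to the reader; your write-up simply fills in more of the routine details.

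One small correction: in your discussion of part (b) you describe the ``main obstacle'' as identifying which unions of boundary strata form a Cartier divisor, but this is a \emph{hypothesis} of the statement, not something to be proved. Once that hypothesis is granted, the argument is, as you say, formal and identical to (a).
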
 
  
\begin{proof}  By Proposition \ref{affinevfcprops}; the cutting edges case follows from an integration
over the fiber $\ovl{\M}_{n,\Gamma}^G(\bA,X) \to
\ovl{\M}_{n,\Gamma_0}^G(\bA,X)$ with fibers $\prod_{j > 0}
\ovl{\M}_{0,n,\Gamma_j}(X)$.  The collapsing edges and forgetting
tails properties are left to the reader.
\end{proof}

\subsection{Quantum Kirwan morphism} 

In this section we use the affine gauged Gromov-Witten invariants to
define the quantum Kirwan morphism from $QH_G(X)$ to $QH(X \qu G)$.
For simplicity, we restrict to the case $E$ trivial, that is, the
untwisted case.  We remind that here $QH(X \qu G)$ is defined over the
equivariant Novikov ring, that is, $QH(X \qu G) = H(X \qu G) \otimes
\Lambda_X^G$.

\begin{definition} {\rm (Quantum Kirwan morphism)}   
Suppose that $X$ is a smooth polarized projective $G$-variety or a
vector space with a linear action of $G$ and proper moment map such
that the git quotient $X \qu G$ is a Deligne-Mumford stack, so that
the moduli stacks $\ovl{\M}^G_n(\bA,X)$ are proper Deligne-Mumford
stacks. 
 The {\em quantum Kirwan morphism} is the collection of maps
$$ \kappa_X^{G,n}: QH_G(X)^{n} \times H(\ovl{\M}_{n,1}(\bA)) \to QH(X
\qu G) , n \ge 0 $$
given by pull-back to $\ovl{\M}^G_{n,1}(\bA,X)$ and push-forward to $X
\qu G$.  That is, for $\alpha \in H_G(X)^{n}, \alpha_\infty \in
H_G(\ovl{I}_{X \qu G}), \beta \in H^*(\ovl{\M}_{n,1}(\bA))$ let
$$ ({\kappa}_X^{G,n} (\alpha,\beta), \alpha_\infty) = \sum_{d \in
  H_2^G(X,\Q)} q^d  \lan \alpha; \alpha_\infty; \beta \ran_d $$
using Poincar\'e duality; the pairing on the left is given by cup
product and integration over $\ovl{I}_{X \qu G}$.  Define ${\kappa}_G^0
\in H^*(\ovl{\M}_{n,1}(\bA))$ similarly, by integrating the unit.
\end{definition}

\begin{theorem}  The collection $\kappa_X^G = ({\kappa}_X^{G,n})_{n \ge 0}$ 
satisfies the axioms of a morphism of CohFT algebras.
\end{theorem}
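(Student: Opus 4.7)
The plan is to verify the axioms of a CohFT algebra morphism by reducing them, via Proposition \ref{affinegwprops}, to boundary divisor relations on the moduli stack $\ovl{\M}_{n,1}(\bA)$. A morphism of CohFT algebras amounts to a collection of multilinear maps $\kappa_X^{G,n}$ whose composition with the CohFT algebra structure on $QH(X \qu G)$ matches the CohFT algebra structure on $QH_G(X)$ followed by $\kappa_X^{G,n}$, together with compatibility on the universal pullbacks coming from the decorated moduli stacks of source curves. Since both sides are already known to be CohFT algebras (Theorems \ref{tracethm} and the orbifold analog), it suffices to compare correlators on matched combinatorial types of degenerations.

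First, I would classify the boundary strata of $\ovl{\M}^G_{n,1,\Gamma}(\bA,X)$ into the two flavors which correspond to the source and target CohFT operations. Interior nodes correspond to degenerations where semi-infinite edges collect onto a projective gauged component (or a further affine gauged bubble) attached to the principal affine component; by the cutting and collapsing edges axioms of Proposition \ref{affinegwprops}, integrals over these strata reproduce composition in $QH_G(X)$ followed by $\kappa_X^G$. Nodes at infinity correspond to orbifold sphere bubbles mapping to $X \qu G$, glued to the affine component along the rigidified inertia stack; integrals over these strata reproduce composition in $QH(X \qu G)$ applied after $\kappa_X^G$.

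Next, I would assemble boundary divisors into Cartier combinations whose dual classes pull back from $H^{*}(\ovl{\M}_{n,1}(\bA))$, and then apply the collapsing-edges-with-relations axiom to obtain the corresponding correlator identity. Applying the cutting-edges axiom to split each disconnected type into a product of invariants---affine gauged invariants for root components, projective gauged invariants for non-root gauged components attached via $H_G(X)$-dual bases, and orbifold Gromov--Witten invariants for components over $X \qu G$ attached via the rescaled $\ovl{I}_{X \qu G}$-dual bases of \eqref{orbsplit}---then yields precisely the identity that $\kappa_X^G$ is a morphism of CohFT algebras. The forgetting-tails axiom handles the unit and the divisor relation for $\alpha' \in H^2_G(X)$.

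The main obstacle is the non-smoothness of $\ovl{\M}_{n,1}(\bA)$: individual boundary divisors need not be Cartier, which is what forces the use of the ``with relations'' variant of the collapsing-edges axiom. The delicate bookkeeping is to verify that the Cartier combinations on $\ovl{\M}_{n,1}(\bA)$ geometrically match exactly the composition patterns demanded by the CohFT morphism axioms, so that no spurious cross-terms appear. A secondary point is to track the orbifold pairing conventions consistently, so that the partial invariants computed on disconnected types reassemble correctly into the composition on $QH(X \qu G)$ rather than on $H(\ovl{I}_{X \qu G})$ with its naive pairing.
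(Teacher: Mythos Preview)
Your approach is essentially the same as the paper's: both reduce the CohFT morphism axiom to parts (a)--(c) of Proposition \ref{affinegwprops} (collapsing edges, collapsing edges with relations, cutting edges). Your sketch is in fact more detailed than the paper's, which simply asserts that the required equation follows from these three parts.

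There is one point you omit that the paper addresses explicitly: well-definedness of the splitting axiom. The right-hand side of the CohFT morphism identity involves a sum over partitions where some blocks may be empty, contributing factors of $\kappa_X^{G,0}(1)$. For this sum to make sense in the Novikov completion, one needs that $\kappa_X^{G,0}$ has no $q^0$ term, i.e., only contributes with coefficients $q^d$ satisfying $(d,[\omega_{X,G}])>0$. The paper observes this holds because a trivial affine gauged map with no finite markings is unstable, so the $d=0$ contribution to $\kappa_X^{G,0}$ is empty; hence modulo any fixed energy level the sum is finite. You should include this check, since without it the identity you aim to prove is not a priori a well-formed equation in $\Lambda_X^G$.
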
 

\begin{proof}  First note that the splitting axiom is well-defined: 
 Note that $\kappa_X^{G,0}$ has contributions with coefficients $q^d$
 with $(d,[\omega_{X,G}]) > 0$, since trivial maps with no finite
 markings are unstable.  It follows that the sum on the
 right-hand-side of \cite[(11)]{qk1}
%\eqref{weaksplit}
 is finite modulo terms with coefficient $q^a$ and higher, for any $a
 \in \R$.  The equation \cite[(11)]{qk1}
%\eqref{weaksplit} 
now follows from parts (a)-(c) of Proposition
\ref{affinegwprops}.
\end{proof} 

\begin{remark} \label{eqqkirwan}
\begin{enumerate} 
\item {\rm (Equivariant quantum Kirwan morphism)} 
If the action of $G$ extends to an action of a group
$\ti{G}$ containing $G$ as a normal subgroup, there is a map
$$ QH_{\ti{G}}(X)^n \times H(\ovl{\M}_{n,1}^G(\bA,X))
\to QH_{\ti{G}/G}(X \qu G)
$$
defined by the same formula.  After extending the coefficient ring of
$QH_{\ti{G}/G}(X \qu G)$ from $\Lambda_{X \qu G}$ to $\Lambda_X^{G}$
we have a morphism of CohFT algebras
\begin{equation} \label{qpartial}
({\kappa}_X^{\ti{G},G,n})_{n \ge 0} : QH_{\ti{G}}(X) \to
    QH_{\ti{G}/G}(X \qu G) .\end{equation}
\item {\rm (Flatness of the quantum Kirwan morphism in the positive
  case)} Suppose that $c_1^G(X)$ is semipositive in the sense that
  $(c_1^G(X), d) \ge 0$ for the homology class $d$ of any gauged
  affine map.  In this case, the ``quantum corrections'' in any
  $\kappa_X^{G,n}(\alpha_1,\ldots,\alpha_n)$ are of degree at most
  $\deg(\alpha_1) + \ldots + \deg(\alpha_n) + 2 - 2n$.  In particular,
  the element $ \kappa_X^{G,0}(1)$ can be written as the sum of
  elements of degree $0$ and $2$ with respect to the grading induced
  by the grading on $H(I_{X \qu G})$.  If $c_1^G(X)$ is positive, then
  the dimension count shows that $\kappa_X^{G,0}$ is an element of
  degree $0$ in $H(I_{X \qu G})$, times an element of $\Lambda_X^G$,
  that is, a multiple of the point class.  If $(c_1^G(X),d)$ is at
  least two whenever $(d,[\omega_{X,G}]) > 0$ then $\kappa_X^{G,0}$
  vanishes.
  \end{enumerate} 
\end{remark}  

We end this section with a partial computation of the quantum Kirwan
morphism in the toric case.  Suppose that $X \cong \C^k$ is a vector
space equipped with a linear action of a torus $G$ with Lie algebra
$\g$ and weights $\mu_1,\ldots,\mu_k \in \g^\dual$ in the sense that
$G$ acts on the $j$-th factor by the character $\exp(\mu_j)$.  We
denote by $\ti{G} = (\C^\times)^k$ the torus acting on $X$ by scalar
multiplication on each factor.  Let $v_1,\ldots, v_k$ be the standard
coordinates on the Lie algebra $\ti{\g}$ so that
$$ QH_{\ti{G}}(X) = \Q[v_1,\ldots,v_k] \otimes \Lambda_X^{\ti{G}} .$$  
However, for the purposes of this section it suffices to tensor with
the $G$-equivariant Novikov field $\Lambda_X^G$.  The inclusion $G \to
\ti{G}$ induces a map $ r: QH_{\ti{G}}(X) \to QH_G(X)$, which after
identification of the equivariant cohomology with symmetric functions
$QH_G(X) \cong \Sym(\g^\dual) \otimes \Lambda_X^G $ is the restriction
map induced by the inclusion $\g \to \ti{\g}$.  Let $l(v_j), j =
1,\ldots, k$ denote the divisor classes in $H(I_{X \qu G})$ defined by
$v_j$, see \cite[Example 4.8]{qk2}.
%\ref{torictwisted}.

\begin{lemma} \label{leadingorder} Let $G$ be a torus acting on a vector space $X$ as above.  For any $d \in H_2^G(X,\Z)$ such that the polarization vector  $\nu$ lies in $ \on{span}
\{ - \mu_j, \mu_j(d) \ge 0 \}  $ (see \cite[(30)]{qk2})
%\eqref{toricss} )
%
we have 
$$\kappa_X^{G,1} \left( \prod_{\mu_j(d) \ge 0} r(v_j)^{\mu_j(d)} \right) =
q^d \prod_{\mu_j(d) \leq 0} l(v_j)^{- \mu_j(d)} + {\rm \ higher\ order} $$
where higher order means terms with coefficient $q^{d'}$ with
$(d',[\omega_{X,G}]) > (d, [\omega_{X,G}])$.
\end{lemma}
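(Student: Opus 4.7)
The strategy is to exploit the toric structure of the moduli stack of affine gauged maps. By \cite[(31)]{qk2}, when $X = \C^k$ is a vector space with a linear torus action and $d$ lies in the stability chamber determined by the polarization vector $\nu$, the stack $\ovl{\M}^G_{1,1}(\bA,X,d)$ is a proper toric Deligne-Mumford stack carrying an action of the full torus $\ti{G} = (\C^\times)^k$ compatible with the evaluation maps, and its virtual fundamental class coincides with its ordinary fundamental class. Under the evaluation $\ev$ at the finite marking, $r(v_j)$ pulls back to the $\ti{G}$-equivariant class of the divisor where the $j$-th component of the section vanishes at that marking; under $\ev_\infty$, the class $l(v_j)$ pulls back to the corresponding toric boundary divisor on $X \qu G$.

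The plan is to compute
$$(\kappa_X^{G,1}(\alpha), \alpha_\infty) = \sum_{d'} q^{d'} \int_{[\ovl{\M}^G_{1,1}(\bA,X,d')]} \ev^* \alpha \cup \ev_\infty^* \alpha_\infty$$
for $\alpha = \prod_{\mu_j(d) \ge 0} r(v_j)^{\mu_j(d)}$ by Atiyah-Bott $\ti{G}$-localization, in the spirit of Szenes-Vergne \cite{sv:tr} and Morrison-Plesser \cite{mp:si}. First I would isolate the $q^d$ term. The $\ti{G}$-fixed loci in $\ovl{\M}^G_{1,1}(\bA,X,d)$ consist of torus-invariant gauged maps whose non-vanishing components are monomials $z^{m_j}$ with $0 \le m_j \le \mu_j(d)$, together with a choice of finite marking at a torus-fixed point. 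The insertion $\ev^* \alpha$ forces each $f_j$ with $\mu_j(d) \ge 0$ to vanish to maximal order at the finite marking, concentrating the integral onto a single distinguished stratum; after dividing by the Euler class of the normal bundle and pairing with $\alpha_\infty$, the result is exactly the pairing of $\prod_{\mu_j(d) \le 0} l(v_j)^{-\mu_j(d)}$ with $\alpha_\infty$, as claimed.

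Recasting the same localization as an iterated residue on $\ti{\g}$ makes the structure transparent: the leading $q^d$ term corresponds to the outermost pole at the weights $\{-\mu_j : \mu_j(d) \ge 0\}$, and the polarization hypothesis $\nu \in \on{span}\{-\mu_j : \mu_j(d) \ge 0\}$ is precisely what places this pole in the correct chamber. For the vanishing of $q^{d'}$ contributions with $(d', [\omega_{X,G}]) < (d, [\omega_{X,G}])$, I would use a dimension count: for such $d'$ in the affine stability cone, either $\ovl{\M}^G_{1,1}(\bA,X,d')$ is empty or its virtual dimension is insufficient to absorb the cohomological degree $2\sum_{\mu_j(d) \ge 0} \mu_j(d)$ of the insertion $\ev^* \alpha$; the hypothesis on $\nu$ identifies $d$ as the minimal class for which the insertion can produce a non-zero integral.

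The main obstacle I anticipate is the clean combinatorial identification of the leading fixed-point contribution. A priori many toric strata in $\ovl{\M}^G_{1,1}(\bA,X,d)$ contribute to the localization, and showing that the residues assemble into exactly $\prod_{\mu_j(d) \le 0} l(v_j)^{-\mu_j(d)}$ requires careful contour selection and bookkeeping of multiplicities along boundary divisors parametrizing bubbling at the finite marked point. The polarization hypothesis is used essentially here to ensure that precisely one pole is enclosed at leading order in $q$.
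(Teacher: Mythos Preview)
Your approach via full $\widetilde{G}$-equivariant localization is genuinely different from the paper's, and while it could in principle be pushed through for the $q^d$ term, it is considerably less direct. The paper bypasses the combinatorial difficulty you flag by interpreting $\prod_{\mu_j(d)\ge 0} r(v_j)^{\mu_j(d)} = \ev^*\Eul\bigl(\bigoplus_{\mu_j(d)\ge 0}\C_{\mu_j}^{\mu_j(d)}\bigr)$ and writing down an explicit section
\[
\sigma:\ovl{\M}^G_{1,1}(\bA,X,d)\to \ev_1^*\!\!\prod_{\mu_j(d)\ge 0}\!\!\C_{\mu_j}^{\mu_j(d)},\qquad u\longmapsto\bigl(u_j^{(i)}(0)\bigr)_{i<\mu_j(d)},
\]
the jets of the map at the finite marking. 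On the open stratum, $\sigma^{-1}(0)$ consists precisely of maps $u_j(z)=c_j z^{\mu_j(d)}$, and $\ev_\infty$ identifies this locus with (an open subset of) $X\qu G$; the obstruction bundle $R^1p_*e^*T(X/G)=\ev_1^*\bigoplus_{\mu_j(d)<0}\C_{\mu_j}^{-\mu_j(d)-1}$ then supplies the factor $\prod_{\mu_j(d)\le 0} l(v_j)^{-\mu_j(d)}$. No residue bookkeeping is needed: a single transverse zero locus replaces the sum over fixed loci.

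There is a genuine gap in your treatment of the lower-order terms. A pure virtual-dimension count does not suffice: for $d'$ with $(d',[\omega_{X,G}])<(d,[\omega_{X,G}])$ the pushforward $\ev_{\infty,*}(\ev^*\alpha\cap[\ovl{\M}^G_{1,1}(\bA,X,d')])$ lands in degree $2\bigl(\sum_{\mu_j(d)\ge 0}\mu_j(d)-\sum_j\mu_j(d')\bigr)$, which can perfectly well lie in the range $[0,2\dim(X\qu G)]$, so dimension alone does not force vanishing. The paper instead argues geometrically using the same section $\sigma$: on any boundary stratum (or on a moduli space of strictly smaller class) the principal affine component $u_1$ has class $d'$ with $(d-d',[\omega_{X,G}])>0$, and the vanishing $\sigma(u)=0$ forces the $j$-th coordinate of $u_1$ at $\infty$ to vanish whenever $\mu_j(d')<\mu_j(d)$. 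The polarization hypothesis $\nu\in\operatorname{span}\{-\mu_j:\mu_j(d)\ge 0\}$ is used here, via \cite[(30)]{qk2}, to conclude that the resulting point of $X$ is \emph{unstable}, so $\sigma^{-1}(0)$ is empty on these strata. This stability argument, not a dimension count, is the mechanism that kills the lower-order contributions; your proposal does not identify it.
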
 

\begin{proof}  We show 
\begin{equation} \label{qsreq}
\int_{[\ovl{\M}_{1,1}^G(\bA,X,d)]} \prod_{ \mu_j(d) \ge 0} r(v_j)^{\mu_j(d)} \cup
\ev_\infty^* \alpha = \int_{[X \qu G]} \prod_{\mu_j(d) \leq 0} l(v_j)^{-\mu_j(d)} \cup \alpha.
 \end{equation}
We compute the left-hand-side by interpreting the first factor as an
Euler class
$$ \prod_{\mu_j(d) \ge 0} r(v_j)^{\mu_j(d)} = \ev^* \Eul \left(\bigoplus_{\mu_j(d) \ge 0}
\C^{\mu_j(d)} \right) $$
and counting the zeros of a section.  Identifying framed maps with a
single marking with maps $u: \bA \to X$, consider the map
$$ \sigma: \ovl{\M}_{1,1}^{G}(\bA,X,d) \to \ev_1^* \prod_{\mu_j(d)
  \ge 0} \C_{\mu_j}^{\mu_j(d)}, \quad u \mapsto
( u_i^{(j)}(0) )_{i=1,j = 1}^{k,  \mu_j(d)  - 1} $$
whose components are the derivatives of the map at the finite marking.
On the stratum $\M_{1,1}^G(\bA,X,d)$ of curves with irreducible
domain, the intersection $ \sigma^{-1}(0)$ maps injectively into $X
\qu G \subset \ovl{I}_{X \qu G}$ via $\ev_\infty$.  Indeed the
assumption on the span of $\mu_j, \mu_j(d) \ge 0$ implies that
$\M_{1,1}^G(\bA,X,d)$ is non-empty, the equation $\ev_\infty(u) =
\pt_{X \qu G}$ fixes the leading order terms (see Examples 
\cite[5.32]{qk2}
%\ref{jtgen}
and \ref{jtgen3}) and $\sigma(u) = 0$ fixes the lower order terms in
$u$.  Since $\ev_\infty$ maps smoothly onto $\bigoplus_{\mu_j(d) \ge
  0} \C_{\mu_j} \cap X^{\sss} / G$, the integral \eqref{qsreq} is equal
to
$$ \int_{[X \qu G]} \alpha \cup \prod_{\mu_j(d) < 0}
l(v_j)^{-\mu_j(d)} $$
where the virtual integration of $[u]$ is with respect to the virtual
fundamental class induced from that on the moduli stack.  Taking into
account the {\em obstruction bundle} 
$$ R^1 p_* e^* T(X/G) = \ev_1^* \bigoplus_{ \mu_j(d) < 0 }
\C_{\mu_j}^{ - \mu_j(d) - 1} $$
we see that $\kappa_G^{X,1}( \prod r(v_j)^{\max(0,\mu_j(d))}) $
contains a term of the form $q^d \prod l(v_j)^{\max(0,-\mu_j(d))}$
plus contributions from other strata and components of the moduli
space of other homology classes.

We check next that there are no contributions from boundary strata.
On the boundary with curves of reducible domain, each map $u$ consists
of a component $u_1: C_1 \to X/G$ consisting of an affine scaled map
of homology class $d'$ with $(d', [\omega_{X,G}]) < (d,
[\omega_{X,G}])$ connecting the marking $z_1$ to the infinite marking
$z_0$, together with bubbles in $X \qu G$ and possibly other affine
scaled maps.  The vanishing $\sigma(u) = 0$ implies that, in
particular, the $\mu_j(d')$-th derivative of $u_1$ is zero if
$\mu_j(d')$ is integral and less than some non-negative $\mu_j(d)$.
The same conclusion holds if $\mu_j(d') < \mu_j(d)$ is negative, since
in this case the $j$-th component of $u_1$ vanishes identically.  On
the other hand, since $( d- d' , [\omega_{X,G}]) > 0$, the set of
points in $X \qu G$ whose $j$-th coordinate vanishes if $\mu_j(d - d')
> 0$, is unstable, see \cite[(30)]{qk2}.
%\eqref{toricss}. 
 Thus, $\sigma^{-1}(0)$ is
empty on the boundary strata and the only contribution to the integral
above arises from the component of maps with irreducible domain.
\end{proof} 

\begin{example}\label{jtgen3} 
\begin{enumerate}
\item {\rm (Projective Space Quotient)} If $G = \C^\times$ acts on $X
  = \C^k$ with all weights one, so that $X \qu G = \P^{k-1}$, then $
  \M^G_{1,1}(\bA,X)$ may be identified with the space of $k$-tuples of
  polynomials $(p_1(z),\ldots,p_n(z))$ with $(p_1(z),\ldots,p_k(z))$
  non-zero for $z$ generic.  We obtain a section
$$ \sigma: \ovl{\M}^G_{1,1}(\bA,X,1) \to \ev_1^* (X \times X \to X) $$ 
by evaluating the polynomials at $0$.  This section has no zeroes
other than at $ [ c_1 z, \ldots, c_k z]$ for $(c_1,\ldots, c_k) \neq
0$, which lies in the open stratum of maps with irreducible domain.
In particular,
$$   \int_{[ \ovl{\M}^G_{1,1}(\bA,X,1)]} \ev_1^* (X \times X \to X)
\cup \ev_\infty^* ([ \pt_{X \qu G}]) = 1 $$
which implies that $\kappa_X^{G,1}(\xi^k) = q$ where $\xi$ is the
generator of $QH_G(X) \cong \Lambda_X^G[\xi]$.
\item {\rm (Weighted Projective Line Quotient)} Let $X = \C_2 \oplus
  \C_3$ and $G = \C^\times$ so that $X \qu G = \P[2,3]$.  Let
  $\theta_1$ resp. $\theta_2$ resp. $\theta_3$ resp. $\theta_3^2$
  denote the generator of the component of $QH(X \qu G) \cong
  H(\ovl{I}_{X \qu G}) \otimes \Lambda_{X \qu G}$ with trivial isotropy
  resp. $\Z_2$ isotropy resp. corresponding to $\exp( \pm 2 \pi i /3)
  \in \Z_3$.  Let $\xi \in H^2_G(X)$ denote the integral generator.
  One has 
$$
  \kappa_X^{G,1}(1) = 1, \quad \kappa_X^{G,1}(\xi) = \theta_1, \quad
  \kappa_X^{G,1}(\xi^2) = q^{1/3} \theta_3/6, $$ $$ \kappa_X^{G,1}(\xi^3) = q^{1/2} \theta_2/18, 
\quad \kappa_X^{G,1}(\xi^4) =
q^{2/3} \theta_3^2/36, \quad \kappa_X^{G,1}(\xi^5) = q/108 .$$ 
In particular, we see that $\kappa_X^{G,1}$ is surjective and the
kernel is $\xi^5 - q/108$, hence 
$$ QH(\P[2,3]) = \Q[\xi] \otimes \Lambda_X^G / (\xi^5 - q/108) $$
which is a special case of Coates-Lee-Corti-Tseng \cite{coates:wps}.
\end{enumerate} 
\end{example}

\begin{remark}  
\begin{enumerate} \label{stages} 
\item {\rm (Quantum Kirwan surjectivity)} We conjecture the quantum
  analog of Kirwan surjectivity, namely that $\kappa_X^{G,1}$ is
  surjective onto the orbifold quantum cohomology $QH(X \qu G)$ of the
  quotient $X \qu G$.  We have worked out some special cases with
  Gonz\'alez in \cite{gw:surject}.
\item {\rm (Quantum reduction in stages)} One naturally expects a
  quantum analog of the reduction in stages theorem: If $G' \subset G$
  is a normal subgroup then $\kappa_{X \qu G'}^{G/G'} \circ
  \kappa_X^{G,G'} = \kappa_X^{G}: QH_G(X) \to QH(X \qu G) $.  That is,
  we have a commutative diagram of CohFT algebras
$$\begin{diagram} \node{QH_G(X)} \arrow{se,b}{\kappa_X^{G,G'}} \arrow[2]{e,l}{\kappa_X^{G}} \node{}
\node{QH(X \qu G)} \\ \node{} \node{QH_{G/G'}(X \qu G')}
\arrow{ne,r}{\kappa_{X \qu G'}^{G/G'}}
\node{} \end{diagram}.$$
\end{enumerate} 
\end{remark}  

There is a $\C^\times$-equivariant extension of the quantum Kirwan map
$$ \kappa_X^G: QH_G(X) \to QH_{\C^\times}(X \qu G) $$
where $QH_{\C^\times}(X \qu G)$ denotes the completed
$\C^\times$-equivariant cohomology, isomorphic to $QH(X \qu G)[[\zeta
]] $ where $\zeta$ is the equivariant parameter.  The construction is
by pushing-forward over $\ovl{\M}_n^G(\bA,X)$
$\C^\times$-equivariantly, as follows.  Choose a base point in $\bA$,
inducing an identification $\bA \to \C$ and so a $\C^\times$-action on
$\bA$.  The action induces an action on ${\M}_n^G(\bA,X)$, given by
pre-composing each morphism with the action.  Equivalently, the action
is given by acting by scalar multiplication on the one-form $\lambda$,
and so extends to the compactification $\ovl{\M}_n^G(\bA,X)$.  As a
result, the virtual fundamental class for $\ovl{\M}_n^G(\bA,X)$ has a
$\C^\times$-equivariant extension which gives rise to a
$\C^\times$-equivariant extension of $\kappa_X^G$.

\subsection{The adiabatic limit theorem}

We show the adiabatic limit \cite[Theorem 1.5]{qk1}, using a divisor
class relation relating curves with finite and infinite scaling.  Note
that divisor class relations in one-dimensional source moduli spaces
have already been used to prove the associativity of the quantum
products, as well as the homomorphism property of the quantum Kirwan
morphism.

Recall from \cite[Theorem 5.35]{qk2} the stack $\ovl{\M}_{n,1}^G(C,X)$
of scaled gauged maps from $C$ to $X$.  Under the stable=semistable
assumption it has a perfect relative obstruction theorem over
$\ovl{\MM}_{n,1}^{\tw}(C)$, whose complex is dual to $Rp_* e^*
T(X/G)$, and so a virtual fundamental class.

\begin{definition} 
If every polystable gauged map is stable then the {\em scaled gauged
  Gromov-Witten invariants} for $\alpha \in H_G(X)^n, \beta \in
H(\ovl{M}_{n,1}(C))$ are
\begin{equation} \label{scaledinv}
 \lan \alpha, \beta \ran_{d,1,E} = \int_{[\ovl{\M}^G_{n,1}(C,X,d)]}
 \ev^* \alpha \cup f^* \beta \cup \eps(E) .\end{equation}
Define 
$$ \phi^n : QH_G(X)^n \times H(\ovl{M}_{n,1}(C)) \to \Lambda_X^G, \quad
(\alpha,\beta) \mapsto \sum_{d \in H_2^G(X,\Q)} q^d \lan \alpha, \beta
\ran_{d,1,E} $$
for $\alpha \in H_G(X)^n$, extended to $QH_G(X)^n$ by linearity.
\end{definition} 

More generally there are invariants for arbitrary combinatorial type
that satisfy the splitting axioms as in \ref{gaugedvfcprops}.  The
adiabatic limit theorem \cite[1.5]{qk1} follows from the divisor class
relation
\begin{equation} \label{divclass} [\ovl{\M}_n(C)] =
  [\cup_{r,[I_1,\ldots, I_r]} D_{I_1,\ldots, I_r}] \in
  H(\ovl{\M}_{n,1}(C))
 \end{equation}
 from \cite[Proposition 2.43]{qk1}.  Indeed\footnote{This explanation
   was added after publication.} the locus of gauged maps
 of $D_{I_1,\ldots,I_r}$ admits as in \cite[(33)]{qk2} a fibration to
 $\ovl{\M}_{r,1}^G(C,X)$ with fiber a subset of $ \Pi_{j=1}^r
 \ovl{\M}_{i_j,1}^G(\bA,X)$.  To describe this fibration, let
 $\ovl{\M}_{r,1}^{G,\fr}(C,X)$ denote the stack of gauged maps equipped
 with trivializations $T_{z_i}(\hat{C}) \to \C$ of the tangent lines
 at the framings.  Then $D_{I_1,\ldots,I_r}$ is obtained as the
 $(\C^\times)^r$-quotient of the diagonal action
$$ D_{I_1,\ldots,I_r} \cong 
(\ovl{\M}_{r,1}^{G,\fr}(C,X) \times \Pi_{j=1}^r
\ovl{\M}_{i_j,1}^G(\bA,X))/ (\C^\times)^r
 .$$
We have a commutative diagram 
$$ \begin{diagram}
\node{
 (\ovl{\M}_{r,1}^{G,\fr}(C,X)
 \times_{(I_{X \qu G})^r}
 \Pi_{j=1}^r
\ovl{\M}_{i_j,1}^G(\bA,X))} 
\arrow{e} \arrow{s}
\node{ \ovl{\M}_{r,1}^{G,\fr}(C,X)} \arrow{s}  \\
\node{D_{I_1,\ldots, I_r}} \arrow{e}  \node{ \ovl{\M}_{r,1}^{G}(C,X)}
\end{diagram} .$$
The integral of $\ev^*(\alpha)$ over $D_{I_1,\ldots,I_r}$ can be
computed by first pulling back to $ \ovl{\M}_{r,1}^{G,\fr}(C,X)
\times_{(I_{X \qu G})^r} \Pi_{j=1}^r \ovl{\M}_{i_j,1}^G(\bA,X)$,
pushing forward to $\ovl{\M}_{r,1}^{G,\fr}(C,X)$ and taking the
quotient.  The equivariant push-forward produces the cohomology class
$$ \kappa_X^{G,i_1}(\alpha_{I_1}) \otimes \ldots \otimes
\kappa_X^{G,i_r}(\alpha_{I_r}) $$
where $\alpha_{I_k} := \otimes_{i \in I_k} \alpha_i $.  Taking the
quotient identifies the equivariant parameter $\zeta$ with the Chern
class of the tangent line, that is $\zeta = -\psi$.  Thus the
contribution from $D_{I_1,\ldots, I_r}$ is
$$ \tau_{X \qu G}^r( \kappa_X^{G,i_1}(\alpha_{I_1}) |_{\zeta = -\psi}, \ldots,
\kappa_X^{G,i_r}(\alpha_{I_r}) |_{\zeta = -\psi}) .$$
The divisor class formula gives the relation
$$ \tau_X^{G,n}(\alpha_1,\ldots,\alpha_n) = 
\sum_{I_1,\ldots,I_r} \tau_{X \qu G}^r( \kappa_X^{G,i_1}(\alpha_{I_1})
|_{\zeta = -\psi}, \ldots, \kappa_X^{G,i_r}(\alpha_{I_r}) |_{\zeta = -
  \psi}) $$
which is the precise form of the adiabatic limit theorem.  More
generally, the divisor class relations from $\ovl{M}_{n,1}(C)$ pull
back to relations in $\ovl{\M}_{n,1}^G(C,X)$ giving a $2$-morphism
between $\tau_{X \qu G} \circ \kappa_X^G$ and $\tau_X^G$.

\section{Localized graph potentials}

In this section we make contact with the hypergeometric functions
appearing in the work of Givental \cite{gi:eq}, Lian-Liu-Yau
\cite{lly:mp1}, Iritani \cite{iri:gmt} and others.  These results
compute a fundamental solution of the quantum differential equation of
the quotient by studying the contributions to the localization formula
for the circle action on the moduli spaces of gauged maps on the
projective line.  Note that in contrast to \cite{gi:eq},
\cite{lly:mp1} etc., the target can be an arbitrary projective (or in
some cases, quasiprojective) $G$-variety.  The virtual localization
formula expresses the result as a sum over fixed point contributions,
and comparing the contributions to the adiabatic limit \cite[Theorem
  1.5]{qk1} one obtains a stronger result which is closely related to
the ``mirror theorems'' of \cite{gi:eq}, \cite{lly:mp1},
\cite{iri:gmt}, \cite{coates:mirrorstacks},
\cite{ciocan:genuszerowall}.

\subsection{Liouville insertions} 

First we introduce a ``Liouville class'' in the definition of the
graph potential.  This is mostly for historical reasons, to compare
with the results of Givental \cite{gi:eq}.  We first consider the
case of ordinary Gromov-Witten theory with target $X$.  Denote the
universal curve and evaluation map
$$ \begin{diagram} 
 \node{\ovl{\cC}_n(C,X)} \arrow{e,t}{e \times e_C} \arrow{s,l}{p}
\node{ X
\times C} \\
\node{ \ovl{\M}_n(C,X)} \end{diagram} .$$
Let $ [\omega_C] \in H^2(C)$ denote a generator.

\begin{definition}  {\rm (Liouville class and invariants with Liouville insertions) }
 Let $\gamma \in H^2(X)$.  The {\em Liouville class} associated to
 $\gamma$ is
$$ \lambda(\gamma) := \exp( p_* ( e^* \gamma \cup e_C^* [\omega_C] ))
 \in H(\ovl{M}_n(C,X)) .$$
The graph invariants with Liouville insertions are maps
\begin{multline} H(X)^{n} \times H(\ovl{M}_n(C)) 
\otimes H^2(X) \to \Q[\varphi,\varphi^{-1}], \\ \lan \alpha , \beta ,
\gamma \ran_{E,d} = \int_{[\ovl{\M}_n(C,X,d)]} \ev^* \alpha \cup f^*
\beta \cup \lambda(\gamma) \cup \eps(E)\end{multline}
where $\varphi$ is the equivariant parameter for scalar
multiplication.

Similarly for gauged Gromov-Witten invariants, any class $\gamma \in
H_G^2(X)$ gives rise to a {\em gauged Liouville class}
$$ \lambda(\gamma) = \exp(p_* ( e^* \gamma \cup e_C^* [\omega_{C}]))
\in H(\ovl{M}^G_n(C,X)) .$$
Define invariants

\begin{multline} H(X)^{n} \times H(\ovl{M}_n(C)) 
\otimes H^2(X) \to \Q[\varphi,\varphi^{-1}], \\ \lan \alpha , \beta ,
\gamma \ran_{E,d} = \int_{[\ovl{\M}^G_n(C,X,d)]} \ev^* \alpha \cup f^*
\beta \cup \lambda(\gamma) \cup \eps(E).\end{multline}
\end{definition}

\subsection{Localized equivariant graph potentials} 
\label{loc1}

In this section we discuss the extraction of a fundamental solution to
the quantum differential equation from the graph potential, following
e.g. Givental \cite{gi:eq}.  Let $X$ be a smooth projective variety,
or more generally, a smooth proper Deligne-Mumford stack with
projective coarse moduli space.  Let $C = \P$ be equipped with the
standard $\C^\times$ action with fixed points $0,\infty \in \P$.
Denote by $\zeta$ the equivariant parameter corresponding to the
$\C^\times$-action.  The graph potential $\tau_X$ has a natural
$\C^\times$-equivariant generalization
$$ \tau_X^{\C^\times}: QH(X) \times H_{\C^\times}(\ovl{M}_n(\P)) \times H^2(X)
\to \Lambda_X[[\zeta]] .$$
For simplicity we restrict to the untwisted case, that is, $E$
trivial.  The class $[\omega_\P] \in H^2(\P)$ with integral one has a
unique equivariant extension $[\omega_{\P,\C^\times}] \in
H^2_{\C^\times}(\P)$ taking values $0$ resp. $\zeta$ in
$H_{\C^\times}(\on{pt}) \cong \Q[\zeta]$ after restriction to $0$
resp. $\infty $ in $\P$.  The following is well-known, see for example
Givental \cite{gi:eq}.

\begin{proposition}   \label{fpdes}
{\rm (Fixed points for the $\C^\times$-action on graph spaces)} The
induced action of $\C^\times$ on $\ovl{\M}_n(\P,X,d)$ has fixed points
given by configurations $[u]$ consisting of a principal component $C_0
\cong \P$ on which the map $u$ is constant, an $(n_-+1)$-marked stable
map $u_- : C_- \to X$ of degree $d_-$ attached to $0 \in \P$, and an
$(n_++1)$-marked stable map $u_+ : C_+ \to X$ of degree $d_+$ attached
to $\infty \in \P$ with $d_- + d_+ = d$ and $n_- + n_+ = n$.
\end{proposition}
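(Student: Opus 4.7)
The plan is to analyze fixed stable maps $u = (u_\P, u_X): \hat{C} \to \P \times X$ of bidegree $(1,d)$ directly from the definition of the $\C^\times$-action, namely scaling the $\P$-factor of the target. First I would use the fact that since $u_\P$ has degree one, there is a unique irreducible component $C_0 \subset \hat{C}$ on which $u_\P$ restricts to an isomorphism onto $\P$, while every other component is contracted by $u_\P$ to a single point of $\P$. This gives the canonical principal component appearing in the statement.

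Next, I would spell out what it means for $[u]$ to be a $\C^\times$-fixed object of the stack $\ovl{\M}_n(\P, X, d)$: for every $\lambda \in \C^\times$ there must exist an automorphism $\varphi_\lambda$ of the marked pointed curve $\hat{C}$ intertwining $u$ with $(\lambda \cdot u_\P, u_X)$. Restricted to $C_0$, the map $u_\P|_{C_0}$ becomes an equivariant isomorphism $C_0 \to \P$ with respect to the $\C^\times$-action, and $u_X|_{C_0} \circ \varphi_\lambda = u_X|_{C_0}$ for every $\lambda$. Since $\varphi_\lambda$ acts nontrivially on $C_0 \cong \P$ (it induces the scaling action via $u_\P$), the invariance forces $u_X|_{C_0}$ to be constant, as claimed.

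The remaining components of $\hat{C}$, being contracted under $u_\P$, attach to $C_0$ at nodes whose images in $\P$ must themselves be $\C^\times$-fixed, hence must be $0$ or $\infty$. Grouping the components attached at $0$ (respectively $\infty$), each such group together with the attaching node as an additional marking defines a stable map $u_\pm: C_\pm \to X$ of some degree $d_\pm$ with $n_\pm + 1$ markings (the original markings on that group plus the attaching point), and clearly $d_- + d_+ = d$, $n_- + n_+ = n$. Conversely any such configuration is manifestly $\C^\times$-fixed.

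The main technical point is justifying that the automorphisms $\varphi_\lambda$ can be chosen coherently enough to force $u_X|_{C_0}$ to be constant; this is standard once one notes that $\Aut(\P, 0, \infty) = \C^\times$, so the only candidate $\varphi_\lambda|_{C_0}$ compatible with the equivariant structure on $u_\P|_{C_0}$ is the scaling itself, and then $u_X \circ \varphi_\lambda = u_X$ on a dense orbit forces the constancy. The bookkeeping of markings and of the decomposition $d = d_- + d_+$ is then routine.
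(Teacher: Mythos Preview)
Your argument is correct and is the standard one. The paper does not actually give a proof of this proposition: it simply states the result as ``well-known, see for example Givental \cite{gi:eq}'' and moves on, so there is no paper proof to compare against. Your identification of the principal component via the degree-one condition on $u_\P$, the use of the automorphism $\varphi_\lambda$ to force $u_X|_{C_0}$ constant, and the observation that nodes on $C_0$ must sit over the $\C^\times$-fixed points $0,\infty\in\P$ are exactly the ingredients one expects; nothing is missing.
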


We denote by $F_n(d_-,d_+)$ the locus of the fixed point set with
stable maps of classes $d_-,d_+$ respectively attached at $0,\infty
\in \P$.  It has a canonical map $ F_n(d_-,d_+) \to X $ given by
evaluation at any point on the component where the map is constant.
We denote by $\ovl{\gamma}$ the pull-back of $\gamma$ to
$F_n(d_-,d_+)$.

\begin{lemma} \label{mommap} {\rm (Restriction of Liouville class to 
fixed points) } The restriction of $\lambda(\gamma)$ to $F_n(d_-,d_+)$
  is equal to $\exp(\ovl{\gamma} + (d_+,\gamma) \zeta)$.
\end{lemma}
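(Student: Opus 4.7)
The plan is to decompose the universal curve over $F_n(d_-,d_+)$ into its three natural pieces—the principal $\P$-component (on which $u$ is constant), the bubble tree $C_-$ of class $d_-$ attached at $0$, and the bubble tree $C_+$ of class $d_+$ attached at $\infty$—and compute the pushforward $p_*(e^*\gamma \cup e_C^*[\omega_{\P,\C^\times}])$ piece by piece. Since the universal curve is a nodal union of these three pieces and the pushforward is additive over this decomposition (the nodal identifications are codimension one and contribute nothing to $p_*$ of a $2$-form), each contribution can be analyzed independently.

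First I would fix the unique $\C^\times$-equivariant lift $[\omega_{\P,\C^\times}] \in H^2_{\C^\times}(\P)$ characterized by its restrictions $0$ at $0 \in \P$ and $\zeta$ at $\infty \in \P$. On the bubble tree $C_-$ attached at $0$, the map to $\P$ is constant equal to $0$, so $e_C^*[\omega_{\P,\C^\times}]$ restricts to the constant class $0$, giving zero contribution to the pushforward. On the bubble tree $C_+$ attached at $\infty$, the map to $\P$ is constant equal to $\infty$, so $e_C^*[\omega_{\P,\C^\times}] = \zeta$ is a constant pulled back from the base; the pushforward is then $\zeta \cdot p_*(e^*\gamma|_{C_+})$, and integrating $e^*\gamma$ over a stable map of class $d_+$ gives $(d_+,\gamma)$, so this piece contributes $(d_+,\gamma)\zeta$.

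The principal component is the remaining piece: here $u$ is constant with value giving $\overline{\gamma} = e^*\gamma|_{C_0}$ (pulled back along the evaluation-at-any-point map $F_n(d_-,d_+) \to X$), while the restriction of $e_C^*[\omega_{\P,\C^\times}]$ is the class $[\omega_{\P,\C^\times}]$ on $C_0 \cong \P$ itself. Since $\overline{\gamma}$ is pulled back from $F_n(d_-,d_+)$, the projection formula reduces the pushforward to $\overline{\gamma} \cdot \int_\P [\omega_{\P,\C^\times}]$, and this equivariant integral equals $1 \in H_{\C^\times}(\pt)$ (by virtual localization applied to $\P$, or directly because the non-equivariant integral is $1$ and the result is a degree-zero class). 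This gives the contribution $\overline{\gamma}$. Summing the three contributions yields $p_*(e^*\gamma \cup e_C^*[\omega_{\P,\C^\times}])|_{F_n(d_-,d_+)} = \overline{\gamma} + (d_+,\gamma)\zeta$, and taking the exponential proves the claim.

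The only mildly subtle point is justifying that the equivariant integral $\int_\P [\omega_{\P,\C^\times}]$ equals $1$ (rather than acquiring $\zeta$-corrections) and that the pushforward splits cleanly across the nodal decomposition; both follow from general properties of proper pushforward in equivariant cohomology, so I do not expect a real obstacle.
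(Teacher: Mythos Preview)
Your proof is correct and follows essentially the same approach as the paper: decompose the universal curve into $C_-$, $C_0$, $C_+$, use that $[\omega_{\P,\C^\times}]$ restricts to $0$ and $\zeta$ at the two fixed points to kill the $C_-$ contribution and reduce $C_+$ to $(d_+,\gamma)\zeta$, and handle $C_0$ via the projection formula together with $\int_\P[\omega_{\P,\C^\times}]=1$. The paper's argument is the same, only slightly more terse on the justification of the last integral.
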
  

\begin{proof}  
The restriction of $[\omega_{\P,\C^\times}]$ to the fixed point $0$
resp. $\infty$ in $\P$ is $0$ resp. $\zeta$.  Hence the restriction of
$ e^* \gamma \cup e_C^* [\omega_{\P,\C^\times}] $ to a fixed map as in
Proposition \ref{fpdes} is given by 
$$ e^* \gamma \cup e_C^* [\omega_{\P,\C^\times}] |_{C_+} =  \zeta e^* \gamma $$ 
for the components $C_+$ attached to $\infty$, and 
$$  e^* \gamma \cup e_C^* [\omega_{\P,\C^\times}] |_{C_-} = 0 $$
for the components $C_-$ attached to $0$.  The push-forward $p_* ( e^*
\gamma \cup e_C^* [\omega_{\P,\C^\times}] )$ is given by integration
over the union $C_- \cup C_0 \cup C_+$, and the integrals may be
computed separately over each component.  There are two components on
which the integrand is non-zero: over the components $C_+$ attached at
$\infty$ the integral is $(\gamma,d_+) \zeta$, while the integral over
the constant component is $\ovl{\gamma}$, since the integral of
$[\omega_{\P,\C^\times}]$ over $\P$ is $1$ by definition.  Hence 
$$p_* ( e^*
\gamma \cup e_C^* [\omega_{\P,\C^\times}] ) = (\gamma,d_+) \zeta + \ovl{\gamma} $$
and the Liouville class is $\exp(\ovl{\gamma} + (d_+, \gamma) \zeta)$.
\end{proof} 

\begin{definition} {\rm (Localized graph potentials)} 
 Define the {\em localized graph potentials} (also known as the
 one-point descendent potential) 
$$ \tau_{X,\pm}: QH(X) \to QH(X)[[\zeta^{-1}]] $$
by push-pull over the fixed point component given by $\ovl{\M}_{0,n +
  1}(X,d)$
$$ \tau_{X,\pm}(\alpha,q,\zeta) := \sum_{n \ge 0} (1/n!)
\tau_{X,\pm}^n(\alpha,\ldots, \alpha,q,\zeta) $$
where for $n \neq 1$
$$ \tau_{X,\pm}^n(\alpha_1,\ldots,\alpha_n,q,\zeta) = \sum_{d \in
  H_2^G(X,\Z)} q^d \ev_{n+1,*} \left( \mp (\zeta (\mp \zeta -
\psi_{n+1}))^{-1} \bigcup_{i=1}^n \ev_i^* \alpha_i \right),$$
$\psi_{n+1} \in H^2(\ovl{\M}_{0,n+1}(X,d))$ is the cotangent line at
the $(n+1)$-st marked point, and the inverted Euler class is expanded
as a power series in $\zeta^{-1}$ as usual in equivariant
localization. For $n = 1$ there is an additional term, equal to
$\alpha_1$, arising from the situation that there is no bubble
component attached at $0$.
\end{definition} 

Let $\pi: I_X \to X$ denote the canonical projection and $\pi^* \gamma
\in H^2(I_X)$ the pull-back of the class $\gamma \in H^2(X)$.

\begin{lemma} {\rm (Properties of localized graph potentials)} For $\alpha \in H(X)$, $\gamma \in H^2_G(X)$,
\begin{enumerate} 
\item {\rm (Duality)} $ \tau_{X,+}(\alpha,q,\zeta) =
  \tau_{X,-}(\alpha,q,-\zeta) .$
\item {\rm (Pairing)} $ \label{taupair2}
  \tau_X^{\C^\times}(\alpha,\gamma,q,\zeta) = \int_{I_X}
  \tau_{X,-}(\alpha,q,\zeta) \cup \tau_{X,+} (\alpha, qe^{\zeta
    \gamma},\zeta) \cup \exp(\pi^* \gamma).$
\end{enumerate} 
\end{lemma}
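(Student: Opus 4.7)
The plan for (a) is immediate from the definitions: substituting $\zeta \mapsto -\zeta$ in the integrand $(\zeta(\zeta - \psi_{n+1}))^{-1}$ of $\tau_{X,-}^n$ yields $((-\zeta)((-\zeta) - \psi_{n+1}))^{-1} = (\zeta(\zeta + \psi_{n+1}))^{-1}$, which matches the integrand $-(\zeta(-\zeta - \psi_{n+1}))^{-1}$ defining $\tau_{X,+}^n$; the exceptional $\alpha_1$ term at $n=1$ is $\zeta$-independent and hence preserved.

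For (b), the plan is to apply Graber--Pandharipande virtual localization to each of the equivariant integrals
$$\int_{[\ovl{\M}_n(\P,X,d)]^{\vir}} \prod_{i=1}^n \ev_i^*\alpha\,\cup\,\lambda(\gamma)$$
whose weighted sum defines $\tau_X^{\C^\times}(\alpha,\gamma,q,\zeta)$. By Proposition \ref{fpdes} the fixed components are $F_n(d_-,d_+) \cong \ovl{\M}_{0,n_-+1}(X,d_-)\times_X\ovl{\M}_{0,n_++1}(X,d_+)$, the fiber product being taken over the evaluation at the two nodes attaching the bubbles $u_\pm$ to the principal component $C_0 \cong \P$. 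The first main step will be the computation of $\Eul(N^{\vir}_F)$: the smoothings at the two nodes contribute $(\zeta - \psi_-)(-\zeta - \psi_+)$ (using that $T_0\P$, $T_\infty\P$ carry weights $\pm\zeta$), while the moving part of $H^0(\P,T\P) \cong \C_\zeta \oplus \C_0 \oplus \C_{-\zeta}$ modulo the fixed automorphism subalgebra $\aut(\P,0,\infty)$ (of weight zero) contributes an additional $-\zeta^2$, so that $\Eul(N^{\vir}_F) = -\zeta^2 (\zeta - \psi_-)(-\zeta - \psi_+)$.

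The second main step is to insert Lemma \ref{mommap} to obtain $\lambda(\gamma)|_F = \exp(\bar\gamma + (d_+,\gamma)\zeta)$, where $\bar\gamma$ is the pull-back of $\gamma$ along the common node-evaluation $F\to X$. Splitting $-\zeta^2$ as $\zeta\cdot(-\zeta)$ and applying Poincar\'e duality on $I_X$ to the fiber product $F$ rewrites each fixed-point contribution as a pairing of a ``minus'' factor against a ``plus'' factor. The scalar $\exp((d_+,\gamma)\zeta)$ is absorbed as a Novikov shift $q^{d_+} \mapsto (qe^{\zeta\gamma})^{d_+}$ on the plus side, and $\exp(\bar\gamma)$ is extracted by the projection formula as $\exp(\pi^*\gamma)$ in the $I_X$-integral. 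The combinatorial factor $\binom{n}{n_-}/n! = 1/(n_-!\,n_+!)$ coming from distributing $\alpha$-insertions between the two sides is exactly what is needed to reassemble the total sum as $\int_{I_X} \tau_{X,-}(\alpha,q,\zeta)\cup\tau_{X,+}(\alpha,qe^{\zeta\gamma},\zeta)\cup\exp(\pi^*\gamma)$.

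The main obstacle will be the virtual normal bundle calculation, particularly pinning down the $-\zeta^2$ principal-component contribution with the correct sign and verifying it matches the extra $\zeta$ factors in the denominators of $\tau_{X,\pm}$. A secondary subtlety is the unstable boundary case in which one bubble is absent ($d_\pm = 0$ with $n_\pm \le 1$): these fixed loci do not fit the generic fiber-product description, and their contributions must be matched to the exceptional $\alpha_1$ summand appended to the definition of $\tau_{X,\pm}$ at $n=1$. Once these two points are settled, the remainder is routine bookkeeping of generating-series coefficients.
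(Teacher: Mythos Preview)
Your approach is essentially the same as the paper's: virtual localization on $\ovl{\M}_n(\P,X)$ using the fixed-point description of Proposition~\ref{fpdes}, inserting Lemma~\ref{mommap} for the Liouville class, and matching the normal-bundle Euler class to the denominators in $\tau_{X,\pm}$. The paper packages the principal-component contribution as ``deformation of the attaching point'' $T_{w_+}C$ on each side (yielding $\pm\zeta$), while you phrase it as the moving part of $H^0(\P,T\P)$ modulo $\aut(\P,0,\infty)$; these are the same two weight spaces $\C_{\pm\zeta}$ and give the same $-\zeta^2$. One technical point the paper mentions and you omit: Graber--Pandharipande localization requires an embedding of $\ovl{\M}_{0,n}(\P,X)$ into a smooth Deligne--Mumford stack, supplied here by a projective embedding of $X$.
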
 

\begin{proof} 
The pairing formula follows from the virtual localization formula
\cite{gr:loc} for the $\C^\times$-action on $\ovl{\M}_{0,n}(\P,X)$
applied to $ \tau_X^{\C^\times}(\alpha,\gamma,q,\zeta) $.  In order to
apply the virtual localization formula, one needs to know that
$\ovl{\M}_{0,n}(\P,X)$ embeds in a non-singular Deligne-Mumford stack;
this follows by taking a projective embedding of $X$.  Each fixed
point component is described in Proposition \ref{fpdes}.  The integral
over the fixed point component corresponding to components of
degrees $d_-,d_+$ attached at $0,\infty$ is
%
%CW
\begin{multline} \int_{I_X} \ev_{n_-+1,-,*} \left( \mp (\zeta (\mp \zeta -
\psi_{n_-+1}))^{-1} \bigcup_{i=1}^{n_-} \ev_{i,-}^* \alpha_i \right) \cup
\exp( (d_+, \gamma) \zeta + \pi^* \gamma) \\ \cup \ev_{n_++1,+,*}
\left( \mp (\zeta (\mp \zeta - \psi_{n_+ +1}))^{-1} \bigcup_{i=1}^{n_+}
\ev_i^* \alpha_{i,+} \right) .\end{multline}
Indeed, the normal complex of each such configuration is $
T^\dual_{w_+} C \otimes T^\dual_{w_-} \hat{C}^\rho \oplus T_{w_+} C$,
corresponding to deformations of the node and attaching point to $C$
respectively; take the inverse Euler class gives the factor $ (\mp
\zeta (\mp \zeta - \psi_{n_\pm+1}))^{-1}$.  Summing over all possible
classes $d_-,d_+$ and markings $n_-,n_+$ one obtains
$$ \int_{I_X} \tau_{X,-}(\alpha,q,\zeta) \cup \tau_{X,+} (\alpha,
qe^{\zeta \gamma},\zeta) \cup \exp(\pi^* \gamma) $$
as claimed.
\end{proof}

The components of $\tau_{X,\pm}$ give solutions to the quantum
differential equation \cite[(4)]{qk1} for the Frobenius manifold
associated to the Gromov-Witten theory of $X$ because of the
topological recursion relations, see Pandharipande \cite{pa:ra}.

%\begin{remark} 
%%
%\begin{enumerate}
%%
%\item One can also, following Braverman \cite{br:inst} and Coates
%  \cite{co:lc} define $\tau_{X,\pm}$ as an integral over a non-compact
%  moduli space, namely the moduli space $\ovl{\M}_{0,n}(\P \times
%  X,(1,d))^\pm$ of $\ovl{\M}_{0,n}(\P \times X,(1,d))$ containing no
%  bubbles or markings at $\infty$ resp. $0$ in $\P$, where the
%  integral is interpreted via the localization formula.
%%
%\item 
%By a result of Lee-Pandharipande \cite{lp:recon} and Bertram-Kley
%\cite{bk:nr}, in certain good cases (for example, the cohomology is
%generated by degree two classes) the full Gromov-Witten theory can be
%reconstructed from the localized potential.
%\end{enumerate} 
%\end{remark}

\subsection{Localized gauged graph potentials} 
\label{loc2}

In this section we define a gauged version of the localized graph
potential.  We show that the gauged graph potential factorizes as a
pairing between contributions arising from the fixed points of the
$\C^\times$-action on $C = \P$ at $0$ and $\infty$.  We begin by
introducing the gauged version of the Liouville class, which was
introduced in special cases in \cite{gi:eq}.

\begin{definition}  {\rm (Liouville class and invariants with Liouville insertions)} 
Any class $\gamma \in H_G^2(X)$ gives rise to an equivariant class
\begin{equation} \label{lgamma}
\lambda(\gamma) := \exp(p_* ( e^* \gamma \cup e_C^*
       [\omega_{C,\C^\times}])) \in H_{\C^\times}(\ovl{\M}^G_n(C,X))
       .\end{equation}
(Here $H_{\C^\times}(\ovl{\M}^G_n(C,X))$ denotes equivariant
cohomology with formal power series coefficients, so that the
exponential is well defined.)  Inserting the class \eqref{lgamma} in
the integrals gives rise to gauged trace maps with Liouville
insertions
$$ \tau^{G,\C^\times,n}_X: H_G(X)^{n} \times H_{\C^\times}(
\ovl{\M}_n(C)) \times H^2_G(X) \to \Q[[\zeta]] $$
$$ (\alpha, \beta,\gamma) \to \sum_{d \in H_2^G(X,\Z)} q^d 
  \int_{[\ovl{\M}_n^G(C,X,d)_{\C^\times} \to B\C^\times]} \ev^* \alpha
  \cup f^* \beta \cup \lambda(\gamma).$$
\end{definition} 

The resulting potential, as in Givental \cite{gi:eq}, admits a
``factorization'' in terms of contributions to the fixed point formula
near $0$ and $\infty$ in $\P$; the statement and proof take the
remainder of this subsection.  First we describe the fixed point locus of the action.

\begin{definition} [Clutching construction for gauged maps from $\P$]  \label{clutchingdef}
We give a clutching construction for gauged maps, generalizing that of
bundles over the projective line.  Below we will show that all
$\C^\times$-fixed points arise from this clutching construction.
Given one-parameter subgroups $\phi_\pm: \C^\times \to G$ let
$X^{\phi_\pm}$ denote the locus of points with limits,
$$X^{\phi_\pm} := \left\{ x \in X \ | \ \exists \lim_{z \to 0}
\phi_\pm(z) x \right\} .$$
If $X$ is projective then $X^{\phi_\pm} = X$ but if $X$ is linear then
$X^{\phi_\pm}$ is the sum of the positive weight spaces.  Let
$P(\phi_+,\phi_-)$ denote the bundle over $\P^1$ formed from trivial
bundles over $\C$ with clutching function $\phi_+(z)
\phi_-(z^{-1})^{-1}$,
$$ P(\phi_+,\phi_-) = (\C \times G) \cup_{\phi_+ \phi_-^{-1}} (\C
\times G) .$$
For $x \in X^{\phi_+} \cap X^{\phi_-}$ let $u(\phi_+,\phi_-,x)$ denote
the section of $P(\phi_+,\phi_-) \times_G X$ given by
$$ (r_\pm^* u(\phi_+,\phi_-,x))(z) = \phi_\pm(z) x, z \in \C^\times $$
where $r_\pm$ is restriction to the open subsets isomorphic to $\C$
near $0$ resp. $\infty$.

A more general construction is necessary to handle orbifold case,
which involves rational one-parameter subgroups.  Suppose that $k$ is
an integer, $\pi: \C^\times \to \C^\times$ is a $k$-fold cover,
$\theta$ is a $k$-th root of unity, and $\widetilde{\phi}_\pm:
\C^\times \to G$ are one-parameter subgroups such that
$\widetilde{\phi}_\pm(\theta^i)$ fixes $x$ for all $i$,
$\widetilde{\phi}_+ \widetilde{\phi}_-^{-1}$ admits a $k$-th square
root $\phi: \C^\times \to G$.  Then
$$ P(\widetilde{\phi}_+,\widetilde{\phi}_-) = (\C \cup G) \cup_{\phi} (\C \cup G),
\quad (r_\pm^* u(\phi_+,\phi_-,x))(z) = \widetilde{\phi}_\pm(z^{1/k}) x $$
define a bundle-with-section fixed up to automorphism by the
$\C^\times$-action.
\end{definition}  

\begin{lemma}  \label{clutching} 
{\rm (Every fixed point arises from clutching)} Any $\C^\times$-fixed
element $[P,u] \in \M^G(\P,X)^{\C^\times}$ such that $u(z)$ has finite
stabilizer for generic $z$ is of the form $P =
P(\widetilde{\phi}_+,\widetilde{\phi}_-), u =
u(\widetilde{\phi}_+,\widetilde{\phi}_-,x)$ for some
$\widetilde{\phi}_+,\widetilde{\phi}_-,x \in X^{\widetilde{\phi}_-}
\cap X^{\widetilde{\phi}_+}$ as in Definition \ref{clutchingdef}.
\end{lemma}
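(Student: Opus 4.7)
The plan is to use the $\C^\times$-equivariance of $(P,u)$ to trivialize $P$ on the two $\C^\times$-invariant affine opens $U_0 = \P \setminus \{\infty\}$ and $U_\infty = \P \setminus \{0\}$ compatibly with the lifted action, and thereby show that the gauged map is determined by a pair of rational one-parameter subgroups $\widetilde\phi_\pm: \C^\times \to G$ together with a single point $x \in X$; this data is precisely that of Definition \ref{clutchingdef}.

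First I would observe that $U_0, U_\infty \cong \bA^1$ are affine, so every principal $G$-bundle on either is trivial (as $G$ is reductive); fix trivializations. The hypothesis that $u(z)$ has finite generic stabilizer forces $\Gamma := \Aut(P,u)$ to be finite. Being $\C^\times$-fixed in the Deligne-Mumford stack $\M^G(\P,X)$ is equivalent, after pulling back along a $k$-fold cover $\pi: \C^\times \to \C^\times$ with $k$ dividing $|\Gamma|$, to the existence of an honest $\C^\times$-action on $(P,u)$ lifting that on $\P$; the deck transformations $\theta \in \mu_k$ of $\pi$ then act on $(P,u)$ by elements of $\Gamma$.

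In the trivialization over $U_0$ with coordinate $z$, freeness of the $G$-action on $P$ together with commutativity with the lifted $\C^\times$-action force this action to have the form $s \cdot (z,g) = (sz, \widetilde\phi_-(s) g)$ for a unique homomorphism $\widetilde\phi_-: \C^\times \to G$, namely the action on the fiber over $0$. The section in this trivialization reads $u_-(z) = \widetilde\phi_-(z) x$ with $x := u_-(1)$, and its extension across $z = 0$ is equivalent to $\lim_{z \to 0} \widetilde\phi_-(z) x$ existing, i.e., $x \in X^{\widetilde\phi_-}$. The identical argument at $\infty$ yields $\widetilde\phi_+$ and a point $x' \in X^{\widetilde\phi_+}$; adjusting the trivialization over $U_\infty$ by a constant element of $G$ gives $x' = x$, and comparing on the overlap produces a transition function $\widetilde\phi_+ \widetilde\phi_-^{-1}$ on the cover, which admits the $k$-th root $\phi$ realizing $P$ itself before descent.

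The hard part is checking the orbifold compatibility: that the deck transformations $\theta^i \in \mu_k$ act on $(P,u)$ via elements of $\Gamma$ satisfying $\widetilde\phi_\pm(\theta^i) x = x$. This follows from the construction, since the restriction of the lifted $\C^\times$-action to $\mu_k$ covers the trivial action on $\P$ and therefore defines a homomorphism $\mu_k \to \Gamma$. Because $\widetilde\phi_-$ has abelian image, the requirement that $\widetilde\phi_-(\theta^i)$ preserve the section $z \mapsto \widetilde\phi_-(z) x$ reduces to $\widetilde\phi_-(\theta^i) x = x$, and similarly for $\widetilde\phi_+$. The reconstruction $(P,u) \cong (P(\widetilde\phi_+,\widetilde\phi_-), u(\widetilde\phi_+,\widetilde\phi_-,x))$ via Definition \ref{clutchingdef} then completes the proof.
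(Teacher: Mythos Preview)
Your proposal is correct and follows essentially the same approach as the paper: trivialize $P$ on the two standard affine charts, use the $\C^\times$-invariance to see that in each chart the section is an orbit $\widetilde\phi_\pm(z)x$ of a (rational) one-parameter subgroup through a common point $x$, and read off the clutching function as $\widetilde\phi_+\widetilde\phi_-^{-1}$. The only organizational difference is that you pass to the $k$-fold cover of $\C^\times$ at the outset to obtain an honest lifted action and then choose $\C^\times$-equivariant trivializations, whereas the paper works directly with the family of isomorphisms $\phi(w)\colon P\to m(w)^*P$, notes these are determined only up to the finite stabilizer of $u$, and lifts to genuine one-parameter subgroups on a cover at the end; your extra paragraph on the $\mu_k$-compatibility makes explicit what the paper leaves implicit.
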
 

\begin{proof}  Suppose that $x$ has generic stabilizer of order $k$ and  
let $P \to \P$ be a bundle with section $u: \P^1 \to P \times_G X$
that is $\C^\times$-fixed up to automorphism.  For any $w \in
\C^\times$ let $m(w): \P \to \P$ denote the action of $w$.  By
assumption for any $w \in \C^\times$ there exists an isomorphism
$\phi(w) \in \Hom(P, m(w)^*P)$ so that (denoting $\phi(w): P(X) \to
m(w)^*P(X)$ with the same notation) we have $\phi(w) \circ u = m(w)
\circ u .$ The automorphism $\phi(w)$ is unique up to an element of
the finite order stabilizer of $u$ in each fiber.  In local
trivializations of $P$ near $0,\infty$ the automorphism is given by a
map $\phi_\pm : \C \to G$ and the section is given by $u(z) = m(z)
u(1) = \phi_\pm(z) u(1)$.  Furthermore, $\phi_\pm(z)$ is unique up to
an element of the stabilizer of $u(1)$ which implies that $\phi_\pm$
lifts to one-parameter subgroups $\widetilde{\phi}_\pm:
\widetilde{\C}^\times \to G$ for some finite cover $\pi:
\widetilde{\C}^\times \to \C^\times$.  Hence
$ u(z) = \widetilde{\phi}_{\pm}(\ti{z}) u(1) , \ti{z} \in
\pi^{-1}(z).$
The transition map between these two trivializations preserves the
section and is therefore of the form
$ (z,g) \mapsto (z,\widetilde{\phi}_+(\ti{z}) \widetilde{\phi}_-^{-1}(\ti{z})g) .$
The statement of the Lemma follows. 
\end{proof} 

To investigate the stability of a map formed by the clutching
construction in Lemma \ref{clutching}, we restrict to the case that
$G$ is a torus with Lie algebra $\g$ and weight lattice $\Lambda^\dual
\subset \g^\dual$.

\begin{lemma}  \label{clutching2}  {\rm (Semistability 
    of gauged maps formed by clutching)} Suppose that $\widetilde{\phi}_\pm:
  \widetilde{\C}^\times \to G$ and $x \in X^{\widetilde{\phi}_+} \cap X^{\widetilde{\phi}_-}$.
  For $\rho$ sufficiently large, the pair $(P = P(\widetilde{\phi}_-,\widetilde{\phi}_+),u =
  u(\widetilde{\phi}_-,\widetilde{\phi}_+,x))$ given by the clutching construction is Mundet
  semistable iff $x$ is semistable.
\end{lemma}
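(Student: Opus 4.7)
The plan is to use the fact that for $\rho$ sufficiently large, Mundet semistability of a gauged map $(P,u)$ on $\P$ is controlled by the generic Hilbert-Mumford behavior of $u$, with only bounded corrections coming from the topology of $P$.  More precisely, for a torus $G$ the Mundet weight of $(P,u)$ with respect to a one-parameter subgroup $\lambda\colon \C^\times \to G$ has the form
$$ \mu(\lambda,\rho) \;=\; \rho\cdot \mu_{\mathrm{HM}}(u,\lambda) \;+\; \mu_{\mathrm{bun}}(\lambda), $$
where $\mu_{\mathrm{HM}}(u,\lambda)$ is the Hilbert-Mumford weight at a generic point of $\P$ and $\mu_{\mathrm{bun}}(\lambda) = \langle \mdeg(P),\lambda\rangle$ is the bundle contribution, normalized so that Mundet semistability is the condition $\mu(\lambda,\rho)\le 0$ for every $\lambda$ on the unit cocharacter sphere.

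Step one is to compute both terms for the clutching data of Definition \ref{clutchingdef}.  The bundle $P(\widetilde{\phi}_+,\widetilde{\phi}_-)$ has first Chern class $\widetilde{\phi}_+ - \widetilde{\phi}_- \in \Lambda\otimes\Q$, so after normalizing $\lambda$ the bundle term $\langle \widetilde{\phi}_+ - \widetilde{\phi}_-,\lambda\rangle$ is bounded by a constant depending only on $\widetilde{\phi}_\pm$.  For the Hilbert-Mumford term, observe that for every $z\in\C^\times$ the value $u(z) = \widetilde{\phi}_\pm(\tilde z)\cdot x$ lies in $G\cdot x$, so the generic Hilbert-Mumford weight satisfies $\mu_{\mathrm{HM}}(u,\lambda) = \mu_{\mathrm{HM}}(x,\lambda)$, the ordinary Hilbert-Mumford weight of the point $x\in X$ with respect to $\lambda$.

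With these calculations the equivalence follows by taking $\rho$ large.  If $x$ is semistable then $\mu_{\mathrm{HM}}(x,\lambda)\le 0$ for all $\lambda$; since the bundle term is uniformly bounded, we get $\rho\,\mu_{\mathrm{HM}}(x,\lambda) + \mu_{\mathrm{bun}}(\lambda) \le 0$ once $\rho$ is large enough (with threshold depending only on $\widetilde{\phi}_\pm$), giving Mundet semistability.  Conversely if $x$ is unstable, some $\lambda$ has $\mu_{\mathrm{HM}}(x,\lambda) > 0$, and then $\mu(\lambda,\rho) > 0$ for $\rho$ large, violating Mundet semistability.  The main obstacle is the orbifold bookkeeping: $\widetilde{\phi}_\pm$ are genuine one-parameter subgroups only after passing to the $k$-fold cover $\pi\colon \widetilde{\C}^\times\to\C^\times$, so one must verify that both the degree of $P(\widetilde{\phi}_+,\widetilde{\phi}_-)$ and the identification $u(z)\in G\cdot x$ descend coherently to the base $\P$ and give a well-defined generic Hilbert-Mumford weight there, and that possible additional contributions from the special fibers $0,\infty\in\P$ (where $u$ lies in $X^{\widetilde{\phi}_\pm}$ rather than in $G\cdot x$) remain uniformly controlled in $\lambda$; this is where one uses that the cocharacter lattice of a torus is a free abelian group of finite rank so that the relevant pairings are linear and the normalized sphere of $\lambda$'s is compact.
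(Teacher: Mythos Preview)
Your approach is essentially the same as the paper's: both reduce Mundet semistability for large $\rho$ to the Hilbert--Mumford condition on the generic value of the section, then observe that the generic value of $u(\widetilde{\phi}_+,\widetilde{\phi}_-,x)$ lies in the $G$-orbit of $x$, so the condition becomes semistability of $x$. The paper states this in two sentences without unpacking the weight formula; you have written out the decomposition $\mu(\lambda,\rho) = \rho\,\mu_{\mathrm{HM}}(x,\lambda) + \mu_{\mathrm{bun}}(\lambda)$ explicitly, which is a reasonable elaboration.

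One point to tighten: in the direction ``$x$ semistable $\Rightarrow$ $(P,u)$ Mundet semistable'' your argument as written has a gap. You say that $\mu_{\mathrm{HM}}(x,\lambda)\le 0$ together with bounded $\mu_{\mathrm{bun}}(\lambda)$ gives $\rho\,\mu_{\mathrm{HM}}(x,\lambda)+\mu_{\mathrm{bun}}(\lambda)\le 0$ for $\rho$ large. But if $\mu_{\mathrm{HM}}(x,\lambda)=0$ for some nonzero $\lambda$ while $\mu_{\mathrm{bun}}(\lambda)>0$, no choice of $\rho$ helps. What actually saves you is the standing hypothesis that $X\qu G$ is a Deligne--Mumford stack, i.e.\ semistable points have finite stabilizer; for a torus action this forces every semistable $x$ to be \emph{stable}, so $\mu_{\mathrm{HM}}(x,\lambda)<0$ strictly on the unit cocharacter sphere, and your compactness remark then gives a uniform negative upper bound that absorbs the bundle term. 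You should invoke this explicitly rather than leave it implicit. The paper's proof is equally terse on this point, but since you are spelling out the mechanism you should close the loop.
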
 

\begin{proof}   
With $(P,u)$ as in the statement of the Lemma, the slope inequality $
\mu(\sigma,\lambda) \leq 0$ holds for all $\sigma,\lambda$ for $\rho$
sufficiently large iff $u$ is semistable at a generic point in the
domain.  Since $u$ is $\C^\times$-fixed, it suffices to check the
semistability of $u$ at $z =1 $ in the local chart near $0$, hence the
condition in the Lemma.
\end{proof}  

\begin{corollary} \label{each}
{\rm (Clutching description of the circle-fixed gauged maps)} Suppose
that $G$ is a torus.  For $\rho$ sufficiently large:
\begin{enumerate} 
\item Each component of $\M_2^G(\P,X,d)^{\C^\times}$ with markings at
  $0,\infty$ is isomorphic to a subset of $X \qu G$ with evaluation
  maps given by $\lim_{z \to 0} \widetilde{\phi}_\pm(z) x$ for some one-parameter
  subgroups $\widetilde{\phi}_\pm: \C^\times \to G$.
\item The fixed point set $\ovl{\M}_n^G(\P,X,d)^{\C^\times}$ is
  isomorphic to a union of quotients
\begin{equation} \label{fiber} 
 \left(\ovl{\M}_{0,n_-+1}(X,d_-) \times_{\ovl{I}_{X \qu G}}
 \M_2^{G,\fr}(\P,X,d_0)^{\C^\times} \times_{\ovl{I}_{X \qu G}} \ovl{\M}_{0,n_++1}(X,d_+)
 \right) \qu G^2 \end{equation}
for some $d_- + d_0 + d_+ = d $ and $n_- + n_+ = n$, where the
stability condition is induced from that on the middle factor.
\item The restriction of $\lambda(\gamma)$ to a fixed point component
  of $\ovl{\M}_n^G(\P,X)$ in \eqref{fiber} is equal to $\exp(
  \ovl{\gamma} + (d_+ + \phi_+, \gamma)\zeta)$ where $\ovl{\gamma}$ is
  the image of $\gamma$ under $H_G^2(X) \to H(\ovl{I}_{X \qu G})$ and
  $\phi_+ \in \g_\Q \cong H_2^G(X,\Q)$ is considered an element of
  $H_2^G(X)$ via the push-forward $H(BG) \to H_2^G(X)$.
\end{enumerate} 
\end{corollary}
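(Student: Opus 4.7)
\emph{Proof sketch.} The plan is to deduce all three parts from the clutching description in Lemmas \ref{clutching} and \ref{clutching2} combined with equivariant localization on the main $\P$ component of each fixed point.

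For part (a), I would take a $\C^\times$-fixed gauged map $[P,u]$ with generic finite stabilizer and markings at $0,\infty \in \P$. Lemma \ref{clutching} writes $P = P(\widetilde{\phi}_-,\widetilde{\phi}_+)$ and $u = u(\widetilde{\phi}_-,\widetilde{\phi}_+,x)$, while Lemma \ref{clutching2} identifies Mundet semistability with semistability of $x$ for $\rho$ large. The evaluations at $0,\infty$ are the limits $\lim_{z \to 0} \widetilde{\phi}_\pm(z) x$, which are automatically $\widetilde{\phi}_\pm$-fixed and hence descend to objects in the rigidified inertia $\ovl{I}_{X \qu G}$. The components of the fixed locus are therefore indexed by pairs $(\widetilde{\phi}_+,\widetilde{\phi}_-)$ and parametrized by the semistable common limit locus $X^{\widetilde{\phi}_+} \cap X^{\widetilde{\phi}_-}$ modulo $G$, which is a subset of $X \qu G$.

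For part (b), I would take a general fixed point in $\ovl{\M}_n^G(\P,X,d)^{\C^\times}$ and decompose its curve as $\hat{C} = \P \cup_0 C_- \cup_\infty C_+$, with the principal (clutched) component $\P$ of class $d_0$ and bubble trees $C_\pm$ attached at $0$ and $\infty$. The $\C^\times$-action acts on the fibers of $P \times_G X$ over $0,\infty$ via $\widetilde{\phi}_\pm$, so each $C_\pm$ must be $\C^\times$-fixed up to automorphism; choosing framings at the two nodes promotes $C_\pm$ to ordinary stable maps to $X$ of classes $d_\pm$ with $d_- + d_0 + d_+ = d$ and $n_- + n_+ = n$. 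The node-matching conditions are exactly encoded by the fiber products over $\ovl{I}_{X \qu G}$, and quotienting by the $G^2$-action on the two framings yields the stated description.

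For part (c), I would compute the restriction of $\lambda(\gamma)$ at $[P,u]$ by splitting the pushforward $p_*(e^* \gamma \cup e_C^* [\omega_{C,\C^\times}])$ into three integrals, one over each piece of $\hat{C}$. On $C_-$ the factor $[\omega_{C,\C^\times}]|_0 = 0$ kills the contribution; on $C_+$ the factor $[\omega_{C,\C^\times}]|_\infty = \zeta$ gives $(d_+,\gamma)\zeta$ exactly as in Lemma \ref{mommap}. The crucial step is the main $\P$ component: equivariant localization at $0,\infty$ again kills the contribution at $0$, while at $\infty$ the restriction $e^* \gamma|_\infty$ picks up both the non-equivariant evaluation $\ovl{\gamma}$ at the limit point $\lim_{z \to 0} \widetilde{\phi}_+(z) x$ and the $\widetilde{\phi}_+$-weight of $\gamma$ at this point. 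Under the identification of $\widetilde{\phi}_+ \in \g_\Q$ with $\phi_+ \in H_2^G(X,\Q)$ via the natural map $H_2(BG) \to H_2^G(X)$, this weight equals $(\phi_+,\gamma)\zeta$, and the Euler class at $\infty$ cancels the factor $\zeta$ from $[\omega_{C,\C^\times}]|_\infty$, so the main-component contribution is $\ovl{\gamma} + (\phi_+,\gamma)\zeta$. Summing and exponentiating yields the formula. The main obstacle lies in part (c): tracking the equivariant weights on the main $\P$ component carefully, in particular the passage between the possibly fractional one-parameter subgroup $\widetilde{\phi}_+$ and the rational homology class $\phi_+ \in H_2^G(X,\Q)$.
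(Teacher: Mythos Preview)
Your proposal is correct and follows essentially the same approach as the paper: parts (a) and (b) come directly from Lemmas \ref{clutching} and \ref{clutching2} plus the obvious bubble decomposition, and part (c) is computed by splitting the pushforward over $C_-, C_0, C_+$ and applying $\C^\times$-localization on the principal component $C_0$ to extract the $\ovl{\gamma} + (\phi_+,\gamma)\zeta$ term. Your identification of the one delicate point, the passage from $\widetilde{\phi}_+$ to $\phi_+ \in H_2^G(X,\Q)$, is exactly right.
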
 

\begin{proof} (a) By Lemma  \ref{clutching}, 
the fixed points correspond to data
$(\widetilde{\phi}_+,\widetilde{\phi}_-,x)$ such that the
corresponding sections $\widetilde{\phi}_\pm(z)x$ extend over $0$.  By
Lemma \ref{clutching2} the value of the section over the open orbit
must be semistable, which proves the claim.  The description in (b)
includes the components $C_-,C_+$ attached to $0,\infty$ and is
immediate.  For (c) the class $[\omega_{C,\C^\times}]$ restricts to
$0,\zeta$ respectively at $0,\infty$.  The integral over the component
$C_+$ attached to $\infty$ is therefore $(\gamma,d_+)\zeta$.  The
integral over the principal component $C_0$ can be computed by
$\C^\times$-localization: Since $\C^\times$ acts on the fiber at
$\infty$ via the one-parameter subgroup $\phi_+$, the restriction of $
e^* \gamma \cup e_C^* [\omega_{C,\C^\times}]$ to the node at $\infty$
in the universal curve is $ (\gamma + (\gamma,\phi_+) \zeta ) \zeta$.
After dividing by the Euler class, one obtains that the integral over
the point $\infty$ is $\gamma + (\gamma, \phi_+) \zeta$.  Part (c)
follows.
\end{proof} 

\begin{example} {\rm (Projective space quotient)}  Let $X = \C^k$ with $G = \C^\times$ acting 
diagonally.  There are no holomorphic curves in $X$, hence the classes
$d_\pm$ of the bubble components attached to $0,\infty$ always vanish.
The moduli stack of gauged maps of class $d \in H_2^G(X,\Z) \cong \Z$
is isomorphic to $\P^{kd + k - 1}$, the projective space of $k$-tuples
of polynomials in two variables of degree $d$.  The group $\C^\times$
acts by pull-back, with fixed point set $\M(\P,X,d)^{\C^\times}$ the
union of projective spaces of $k$-tuples of homogeneous polynomials of
some degree $i = 0,\ldots, d$, each isomorphic to $\P^{k-1}$.
Identifying $H_2^G(X) \cong \Z$ we have $\phi_- = i, \phi_+ = d-i$,
and the isomorphism is given by evaluation at a generic point.  The
Liouville class is the usual Liouville class on $\P^{k-1}$, times an
equivariant correction $\exp( (\gamma,\phi_+) \zeta))$; this class
already appears in Givental \cite{gi:eq}.
\end{example}  

Let us reformulate the description of the fixed point set in Corollary
\ref{each} following the ``factorization philosophy'' as follows.
Given $\ti{\phi}_\pm,d_\pm$ as above and $x$ with order of stabilizer
$k_\pm$, let
$$F^G_{n_\pm}(\widetilde{\phi}_\pm,d_\pm) := \{ ([u],x) \in
\ovl{\M}_{0,n_\pm+1}(X,d_-) \times X \ | \ u(z_{n_\pm + 1}) = \lim_{z
  \to \pm \infty} \widetilde{\phi}_\pm(z) x \} \qu G .$$
Since $x$ is stabilized by $\widetilde{\phi}_\pm(\theta)$, where
$\theta$ is a $k$-th root of unity, we have natural maps
\begin{equation} \label{fiber2}
 F^G_{n_\pm}(\widetilde{\phi}_\pm,d_\pm) \to \ovl{I}_{X \qu G}, \quad
 (u,x) \mapsto [x, \widetilde{\phi}_\pm(\theta)] .\end{equation}
Denote by
\begin{equation} \label{allF} F^G_{n_\pm}(d) := 
\cup_{\phi_\pm + d_\pm =d} F^G_{n_\pm}(\widetilde{\phi}_\pm,d_\pm) .\end{equation}
Corollary \ref{each} implies 
\begin{equation} \label{factor}  
 \ovl{\M}_n^G(\P,X,d)^{\C^\times} \cong \bigcup_{d_- + d_+ = d}
 \bigcup_{n_- + n_+ = n} F^G_{n_-}(d_-) \times_{\ovl{I}_{X \qu G}}
 F^G_{n_+}(d_+). \end{equation}
We may view the factorization of the fixed point sets as a nodal
degeneration as follows.  Consider a degeneration of $\P^1$ to a a
nodal curve with two components, each projective weighted lines
$\P(1,k)$ with node at the orbifold singularity $B\Z_k$.  Let
$P(\phi), P(\phi_+), P(\phi_-)$ denote the (possibly orbifold) bundles
defined by clutching maps $\phi,\phi_+,\phi_-$.  Then $P(\phi)$
degenerates to a principal bundle over the nodal line with
restrictions $P(\phi_+)$ and $P(\phi_-)$.  Each $\C^\times$-fixed
section $u$ degenerates to a pair of sections $(u_-,u_+)$ of
$P(\phi_-) \cup P(\phi_+)$, given by $x$ in the trivializations near
the node and $\phi_\pm(z) x$ in the trivializations near $0$ in the
two copies of $\P(1,k)$.  The degeneration description implies the
following splitting of the normal complex.  Let
$$N(\ovl{\M}_n^G(\P,X,d)^{\C^\times}) \ \text{resp.} \ N_- :=
N(F^G_{n_-}(d_-)) \ \text{resp.} \ N_+ := N(F^G_{n_+}(d_+))$$
denote the normal complex of $\ovl{\M}_n^G(\P,X,d)^{\C^\times}$ resp.
$F^G_{n_-}(d_-)$ resp.  $F^G_{n_+}(d_+)$ in $\ovl{\M}_n^G(\P,X,d)$
resp. $\ovl{\M}_{n_-}^G(\P(1,k),X,d_-)$ resp.
$\ovl{\M}_{n_+}^G(\P(1,k),X,d_+)$.  Deforming the node gives rise to an
embedding $F^G_{n_-}(d_-) \times_X F^G_{n_+}(d_+) \to
\ovl{\M}_n^G(\P,X,d)^{\C^\times}$, and the pullback of the $K$-class of
the normal complex is independent of the deformation parameter.  It
follows that there is an isomorphism in $K$-theory
\begin{equation} \label{factorN} [N(\ovl{\M}_n^G(\P,X,d)^{\C^\times})] =  [N_-] \oplus [N_+] .\end{equation}
Explicitly for any type with more than two components in the domain,
the normal complex receives contributions from deformations of the
map, deformations of the node at the principal component, and
deformations of the attaching point to the principal component,
assuming there is some non-trivial component attached: In $K$-theory
\begin{equation} \label{Npm} 
N_\pm \cong (Rp_* e^*( T(X/G))))^{\on{mov}} \oplus \left( T^\dual_{w_+} C
\otimes T^\dual_{w_-} \hat{C}^\rho \right) \oplus T_{w_+}
C \end{equation}
where $ (Rp_* e^*(T(X/G)))^{\on{mov}}$ is the moving part (under the
action of $\C^\times$) of the index of the tangent complex of $X/G$
and $w_\pm \in \hat{C}^\rho$ are the preimages of the node connecting
to the principal component at $0$ in the normalization $\hat{C}^\rho$,
so that $w_+ = 0$ in the principal component identified with $C$.  The
first factor in \eqref{Npm} represents deformations of the map, the
second deformation of the node, and the third the deformation of the
attaching point to the principal component.  The Euler class is
$$ \Eul(N_\pm) = \Eul((Rp_* e^*( T(X/G))))^{\on{mov}} ) 
(\mp \zeta) (\mp
\zeta - \psi) $$
where $\psi$ is the cotangent line of the node of the component
attached at $0 \in \P$.  

We define the localized gauged graph potentials by twisted integration
over the fixed point sets above.  Pushforward over the map
\eqref{fiber2} induces a map in equivariant cohomology
\begin{equation} \label{compose2}
 \ev_\infty^* (\ev_1^* \times \ldots \times \ev_n^*) : H_G(X)^{\otimes
   n} \to H_{\C^\times}(I_{X \qu G}) .\end{equation}

\begin{example} \label{vspaces} {\rm (Vector spaces)}  
In the case that $X$ a vector space, the map to $X$ is homotopically
trivial and \eqref{compose2} may be identified with the map
\begin{equation} \label{factor3}
\Psi^{\phi_\pm}: H_G(X)^n \to H_{\C^\times}(X \qu G)\end{equation}
given by cup product, pull-back $H_{G \times \C^\times}(X) \to H_G(\on{pt}) \to
H_{G \times \C^\times}(X)$ under the map induced by the constant
$\C^\times$-invariant map given by multiplication by zero and the Kirwan map
$H_{G \times \C^\times}(X) \to H_{\C^\times}(X \qu G)$.  The map \eqref{factor3}
may be computed explicitly using naturality of the quotient
construction as follows.  Composition of the action with the group
homomorphism
$$ \varphi_\pm: \ G \times \C^\times \to G \times \C^\times, \quad (g,z) \mapsto
(z^{\phi_\pm} g, z) $$
makes the action of $\C^\times$ on $X$ trivial and maps the subgroup $G
\times \{ 1 \}$ to $G \times \{ 1\}$.  The quotient map $H_{G \times
  \C^\times}(X) \to H_{\C^\times}(X \qu G)$ is, for this twisted action,
independent of the $\C^\times$-equivariant parameter.  By naturality,
\eqref{factor3} is equal to the composition of the maps
$$ H_G(X)^n \to H(B(G \times \C^\times)) \to H(B(G \times \C^\times)) \to H_{G
  \times \C^\times}(X) \to H_{\C^\times}(X \qu G) $$
where the second map is induced by $\varphi_\pm$ and the action on $H_{G
  \times \C^\times}(X)$ is trivial.  After the identifications
$$H(BG) \cong \Sym(\g^\dual) , \quad H(B(G \times \C^\times)) \cong
\Sym(\g^\dual \oplus \C) $$
we obtain a description of \eqref{factor3} as the map
$$ \Sym(\g^\dual)^n \to H_{\C^\times}( X \qu G), \quad (p_1,\ldots,
p_n)(\cdot) \mapsto (\kappa_X^G |_{q = 0})(p_1 \ldots p_n)(\cdot +
\phi_\pm) $$
where $(\cdot + \phi_\pm)$ denotes translation by $\phi_\pm$ and
$\kappa_X^G |_{q= 0}$ is the classical Kirwan map.
\end{example}

\begin{definition} 
 {\rm (Localized Gauged Graph Potentials)} The {\em localized gauged
   graph potentials} $\tau_{X,\pm}^G$ are the integrals over
 $F^G_{n_\pm}(d_\pm)$ of \eqref{allF}
\begin{multline}
 \tau_{X,\pm}^G: QH_G(X) \to QH(X \qu_\rho G)[\zeta,\zeta^{-1}]], \quad
 \tau_{X,\pm}^G(\alpha,q,\zeta) = \sum_{n \ge 0} (1/n!)
 \tau_{X,\pm}^{G,n}(\alpha,\ldots, \alpha,q,\zeta)
 \\ \tau_{X,\pm}^{G,n}(\alpha_1,\ldots,\alpha_n,q,\zeta) = \sum_{d}
 q^d \ev_{\infty,*} (\ev_1^* \alpha_1 \cup \ldots \cup \ev_n^*
 \alpha_n \cup \Eul(N_\pm)^{-1}).
 \end{multline}
\end{definition} 

\begin{proposition} {\rm (Properties of localized gauged potentials)}
\begin{enumerate} 
\item {\rm (Duality)} $ \tau_{X,+}^G(q,\alpha,\zeta) =
  \tau_{X,-}^G(q,\alpha, - \zeta) .$
\item {\rm (Pairing)} $\lim_{\rho \to \infty}
  \tau_X^{G,\C^\times}(\alpha,\gamma,q,\zeta)= \int_{\ovl{I}_{X \qu G}}
  \tau_{X,-}^G(\alpha,q,\zeta) \cup \tau_{X,+}^G(\alpha,q e^{\gamma
    \zeta},\zeta )) \cup \exp( \ovl{\gamma}).$
\end{enumerate}
\end{proposition}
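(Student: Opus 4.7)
The plan is to deduce both statements from the virtual equivariant localization formula of Graber--Pandharipande \cite{gr:loc} applied to the $\C^\times$-action on $\ovl{\M}_n^G(\P,X,d)$, combined with the clutching description of the fixed points in Lemma \ref{clutching} and the normal-complex factorization \eqref{factorN}.

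For (a), I would exploit the involution of $\P$ swapping $0$ and $\infty$, which acts on the $\C^\times$-action by inversion and hence on the equivariant parameter by $\zeta \mapsto -\zeta$. At the level of fixed points in the clutching description of Definition \ref{clutchingdef} this involution exchanges the triples $(\widetilde{\phi}_+,d_+,n_+)$ and $(\widetilde{\phi}_-,d_-,n_-)$, swaps the cotangent lines and $\psi$-classes at the two attaching nodes, and interchanges the Euler factors $(\mp\zeta)(\mp\zeta - \psi)$ appearing in \eqref{Npm}. Consequently each term in the expansion of $\tau_{X,+}^G$ at parameter $\zeta$ is identified with the corresponding term of $\tau_{X,-}^G$ at $-\zeta$, which proves (a).

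For (b), I would apply virtual localization to $\tau_X^{G,\C^\times,n}$ with insertions $\alpha^{\otimes n}$ and Liouville class $\lambda(\gamma)$, working in the large-$\rho$ regime where by Lemma \ref{clutching2} stable$=$semistable, so that $\ovl{\M}_n^G(\P,X,d)$ is a proper Deligne--Mumford stack and localization applies. By \eqref{factor} the fixed locus decomposes as a union over splittings $d_- + d_+ = d$, $n_- + n_+ = n$ of the fiber products $F^G_{n_-}(d_-) \times_{\ovl{I}_{X \qu G}} F^G_{n_+}(d_+)$, while \eqref{factorN} gives $\Eul(N)^{-1} = \Eul(N_-)^{-1} \cup \Eul(N_+)^{-1}$; the fiber-product structure translates after integration into the pairing $\int_{\ovl{I}_{X \qu G}}$ on the right-hand side. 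By Corollary \ref{each}(c) the Liouville class restricts to $\exp(\ovl{\gamma}) \exp((d_+ + \phi_+, \gamma)\zeta)$: the factor $\exp(\ovl{\gamma})$ is pulled out of the pairing, the piece $\exp((d_+,\gamma)\zeta)$ combines with $q^{d_+}$ to produce the shifted Novikov variable $(qe^{\gamma\zeta})^{d_+}$ inside $\tau_{X,+}^G$, and the remaining $\exp((\phi_+,\gamma)\zeta)$ factor is absorbed into the $+$-integrand through the evaluation \eqref{fiber2}. Finally, the combinatorial prefactor $1/n!$ splits correctly as $(1/n_-!)(1/n_+!)$ via the multinomial identity applied to the partition of insertions between $0$ and $\infty$.

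The main obstacle is to justify virtual equivariant localization in this gauged setting: one must embed $\ovl{\M}_n^G(\P,X,d)$ into a smooth ambient Deligne--Mumford stack and verify that the induced perfect obstruction theories on the fixed-point components are compatible with the restriction of the global one, so that the formula of \cite{gr:loc} applies in the form we need. The large-$\rho$ limit in the statement enters precisely to secure stable$=$semistable (and hence properness together with the Deligne--Mumford property); this is why the equality is asserted only asymptotically. A secondary orbifold-theoretic subtlety is the age-correct identification of the evaluation at the neck orbipoint with the appropriate component of $\ovl{I}_{X \qu G}$, which is already built into the map \eqref{fiber2} and the re-scaled integration pairing on $H(\ovl{I}_{X \qu G})$ discussed after \eqref{orbsplit}.
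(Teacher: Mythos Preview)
Your proposal is correct and follows the same approach as the paper: part (a) via the involution $z\mapsto 1/z$ swapping the two fixed-point factors and negating $\zeta$, and part (b) via virtual localization together with the factorizations \eqref{factor}, \eqref{factorN} and the Liouville restriction in Corollary~\ref{each}(c). One small bookkeeping correction: in the definition of $\tau_{X,+}^G$ the sum is over the \emph{total} class $d=\phi_+ + d_+$ (see \eqref{allF}), so the Novikov shift $q\mapsto qe^{\gamma\zeta}$ already produces the full factor $\exp((d_+ + \phi_+,\gamma)\zeta)$ and there is no separate ``absorption through \eqref{fiber2}'' of the $\phi_+$ piece. The paper also supplies what you flag as the main obstacle: it constructs the required $\C^\times$-equivariant embedding of $\ovl{\M}_n^G(\P,X)$ into a smooth Deligne--Mumford stack by passing through the Quot-scheme presentation of \cite[Proposition 5.12]{qk2} and then into $\ovl{\M}_{g,0}(\P^N)/\Aut(F)$.
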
 

Here the action $e^{\zeta \gamma}$ on $\Lambda_X^G[[\zeta]]$ is
$$ \left( f(q) = \sum c_d q^d \right) \mapsto \left( f( q \exp( \zeta
\gamma)) = \sum c_d q^d \exp( \zeta (\gamma,d)) \right) .$$

\begin{proof}    (a) The fixed point sets $F^G_n(\phi_-,d_-)$ and $F^G_n(\phi_+,d_+)$ are 
isomorphic, and $\C^\times$-equivariantly so after twisting by the
automorphism $z \mapsto 1/z$.  Similarly the complexes $N_-,N_+$ are
isomorphic up to this twisting.  The first claim follows.  (b) is a
consequence of virtual localization applied to the stack
$\ovl{\M}_n^G(\P,X)$ and \eqref{factor}, \eqref{factorN}, and part (c)
of Corollary \eqref{each}.  In order to apply the virtual localization
formula one needs to know that $\ovl{\M}_n^G(\P,X)$ embeds in a
non-singular Deligne-Mumford stack.  For this consider an embedding $G
\to GL(k)$ and $G$-equivariant embedding $X \to \P^{l-1}$ for some
$l$.  As in the proof of \cite[Proposition 5.12]{qk2},
$\ovl{\M}_n^G(\P,X)$ embeds in the moduli stack
$\ovl{\M}_{g,0}(\UU^{\fr,\quot}(C,F) \times_G X,\ti{d})/\Aut(F)$ for a
suitable sheaf $F$.  The latter embeds in
$\ovl{\M}_{g,0}(\UU^{\fr,\quot}(C,F) \times_G
\P^{l-1},\ti{d})/\Aut(F)$.  Now $\UU^{\fr,\quot}(C,F) \times_G
\P^{l-1}$ is an $\Aut(F)$-equivariant quasiprojective scheme and so
embeds in $\P^N$ for sufficiently large $N$, hence
$\ovl{\M}_{g,0}(\UU^{\fr,\quot}(C,F) \times_G \P^{l-1},\ti{d})/\Aut(F)$
embeds $\Aut(F)$-equivariantly in $\M_{g,0}(\P^N)$.  Since
$\ovl{\M}_{g,0}(\P^N)/\Aut(F)$ is a non-singular Deligne-Mumford stack,
the claim follows.
\end{proof} 

\begin{example} {\rm (Localized gauged graph potential for toric quotients)} 
\label{Ifunction} 
Let $G$ be a torus acting on a vector space $X$ is a vector space with
weights $\mu_1,\ldots,\mu_k$ and weight spaces $X_1,\ldots,X_k$ with
free quotient $X \qu G$.  Let $D_j \in H^2(X \qu G)$ denote the
divisor class corresponding to $\mu_j$.  For any given class $\phi \in
H_2^G(X,\Z) \cong \g_\Z$, the loci $X^\phi$ are sums of weight spaces
$$ X^\phi := \left\{ x \ | \ \exists \lim_{z \to 0} \phi(z) x \right\}
= \oplus_{\mu_j(\phi) \ge 0 } X_j .$$
Since there are no non-constant stable maps to $X$, $F^G(\phi,0)$ is
isomorphic to $ X^\phi \qu G$ under evaluation at any generic point.
The domain of any gauged map without markings is irreducible and the
normal complex to $F^G(\phi,0)$ is the moving part $Rp_* e^*(
T(X/G))^{\on{mov}}$.  This splits as a sum of $\mu_j(\phi)$ copies of
$X_j$ with weights $1,\ldots,\mu_j(\phi)$ for $\mu_j(\phi)$ negative,
and $-\mu_j(\phi) - 2$ copies of the normal complex for $\mu_j(\phi)
\leq 0$ with weights $\mu_j(\phi) + 1,\ldots, -1$.  Putting this
together with the normal bundle of $X^\phi \qu G$ in $X \qu G$ and
replacing $\phi$ with $d$ we obtain
\begin{equation} \label{G0} \tau_{X,-}^{G,0}(\zeta,q) = \sum_{d \in H_2^G(X)} q^d 
\frac{ \prod_{j=1}^k \prod_{m = -\infty}^{0} (D_j + m\zeta) }{
  \prod_{j=1}^k \prod_{m=-\infty}^{\mu_j(d)} (D_j + m \zeta) }.
\end{equation}
Note that the terms with $X^d \qu G = \emptyset$ contribute zero in
the above sum, since in this case the factor in the numerator $
\prod_{\mu_j(d) < 0} D_j$ vanishes.  The function
$\tau_{X,-}^G(\alpha,\zeta,q)$ for $\alpha \in H_G(X)$ can be computed
as follows.  Since there are no non-constant holomorphic spheres in
$X$, the evaluation maps $\ev_1,\ldots, \ev_{n}$ are equal on
$\ovl{\M}^G_n(\C_\pm,X)^{\C^\times}$.  It follows that the pushforward
of $\ev^* \alpha^{\otimes n}/ \mp \zeta (\mp \zeta - \psi)$ is equal
to
\begin{equation} \label{integral} 
  \alpha^{n} (\zeta)^{-2} \int_{[\ovl{M}_{0,n+1}]} (\psi_{n+1}/\mp
  \zeta)^{n-2} .\end{equation}
This integral can be computed iteratively by pushing forward under the
maps $f_i$ forgetting the $i$-th marked point for $i \leq n$.  We have
the relation for the first Chern class $\psi_{n+1}$ of the cotangent
line at the last marked point in $\ovl{\M}_{0,n+1}$
$$ \psi_{n+1} = f_i^* \psi_{n} + [D_{ \{ 0, i, n+1 \} \cup \{
    1,\ldots, \hat{i}, \ldots, n \}}] \in H(\ovl{M}_{0,n+1}) .$$
  The divisor class is degree one in any fiber of the forgetful map
  $f_i$ and it follows that the integral \eqref{integral} is equal to
  $(\alpha/\zeta)^n$.  By Example \ref{vspaces} this implies that for
  $\alpha \in H_G(X)$
\begin{equation} \label{G0} \tau_{X,-}^G(\alpha,\zeta,q) = 
\sum_{d \in H_2^G(X)} q^d \exp( \Psi^d(\alpha)/\zeta) \frac{
  \prod_{j=1}^k \prod_{m = -\infty}^{0} (D_j + m\zeta) }{
  \prod_{j=1}^k \prod_{m=-\infty}^{\mu_j(d)} (D_j + m \zeta) }.
\end{equation}
Thus $\tau_{X,-}^G$ is the generalization of Givental's {\em
  $I$-function}, see \cite{giv:tmp}, considered in
Ciocan-Fontanine-Kim \cite[Section 5.3]{ciocan:bigI}.
\end{example} 

\subsection{Localized adiabatic limit theorem}

We prove the refinement \cite[Theorem 1.6]{qk1} of the adiabatic limit
Theorem \cite[Theorem 1.5]{qk1} by comparing the fixed point
contributions to the graph potentials.  Recall the stack
$\ovl{\M}_{n,1}^G(\P,X)$ of curves with scaling from
\eqref{scaledinv}.  We claim the invariants of \eqref{scaledinv} also
have $\C^\times$-equivariant extensions:

\begin{lemma} {\rm (Existence of a circle action on the master space)}   
The $\C^\times$-action on $\ovl{\M}_n^G(\P,X)$ extends to
$\ovl{\M}_{n,1}^G(\P,X)$.  
The action on the substack
$\ovl{\M}_n^G(\P,X)_\infty$ is the action induced from the action of
$\C^\times$ on the factors $\ovl{\M}_{i_j,1}^G(\bA,X)$ and
$\ovl{\M}_r^G(\P, X \qu G)$.
\end{lemma}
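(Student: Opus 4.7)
The plan is to transport the $\C^\times$-action on the base $\P$ through the scaling datum, using the equivariance of the canonical bundle $\omega_\P$. Recall from \cite[Theorem 5.35]{qk2} that a scaled gauged map from $C = \P$ consists of a gauged map $(P, \hat C, u, z_1, \ldots, z_n)$ augmented by a scaling, which may be encoded as a section $\delta$ of a compactification $\P(\omega_{C_0} \oplus \C)$ of the canonical bundle on the principal component $C_0 \cong \P$. Since the standard $\C^\times$-action on $\P$ canonically linearizes $\omega_\P$ and its trivial complement, it lifts to an action on $\P(\omega_\P \oplus \C)$. For $w \in \C^\times$, pulling back the entire datum $(P, \hat C, u, \delta, z_i)$ under the morphism $w : \P \to \P$ then defines the action on the open locus of finite nonzero scaling, which extends the action on $\ovl{\M}_n^G(\P,X)$ obtained by forgetting the scaling.

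Next I would extend the action across the boundary at infinite scaling. By the stable-reduction picture behind the divisor-class relation \eqref{divclass}, the substack $\ovl{\M}_n^G(\P,X)_\infty$ decomposes as a union
\[ \bigcup_{r,\, [I_1,\ldots,I_r]} \ovl{\M}_r^G(\P, X\qu G) \times \prod_{j=1}^r \ovl{\M}_{|I_j|,1}^G(\bA,X) \]
(fibered over appropriate evaluation data), reflecting that the scaling has vanished on the principal component $C_0$ — so that $u|_{C_0}$ descends to a gauged map into the quotient — while affine scaled bubble trees have formed at each of the $r$ attaching points. The $\C^\times$-action on $\P$ preserves this stratification and acts on the first factor by the standard action on $\P$. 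At each attaching point, the $\C^\times$-linearization of $\omega_\P$ restricts to a weight-one character on the tangent line, and under the canonical identification of a tangent-line neighborhood of the attaching point with $\bA$ this becomes the standard $\C^\times$-action on $\bA$ fixing the basepoint and $\infty$, as used in the definition of the factors $\ovl{\M}_{|I_j|,1}^G(\bA,X)$.

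The main obstacle — and the content of the lemma — is the compatibility check that the open-stratum action and the claimed boundary action glue to a global $\C^\times$-action on $\ovl{\M}_{n,1}^G(\P,X)$. For this I would appeal to the universal property of the scaled-gauged-maps stack as a moduli functor: an automorphism of $\P$ induces a canonical automorphism of the moduli stack, and stable reduction of a family crossing the boundary is itself canonical and hence $\C^\times$-equivariant. Since the open locus is dense in each irreducible component, the extension is unique, and the description of the boundary strata together with the weight computation above forces the action on $\ovl{\M}_n^G(\P,X)_\infty$ to agree with the stated product action.
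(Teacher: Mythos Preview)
Your argument is essentially correct, but the paper takes a more direct route that avoids the gluing issue you flag in your third paragraph. Rather than defining the action first on the open locus of finite nonzero scaling and then extending across the boundary, the paper observes that the $\C^\times$-action on $\P$ acts directly on the full Artin prestack $\ovl{\MM}_{n,1}^G(\P,X)$: the action on $\P$ induces an action on stable maps to $\P\times X$ by composition and on the relative dualizing sheaf by pullback, so every object of the moduli problem pulls back at once. One then checks that the Mundet stability condition is $\C^\times$-invariant (the Hilbert--Mumford weights of two objects related by the action are in bijection), so the action restricts to the semistable locus $\ovl{\M}_{n,1}^G(\P,X)$. This is really the content of your appeal to the ``universal property of the moduli functor'' in the third paragraph, and once you invoke it the stratum-by-stratum construction of the first two paragraphs becomes unnecessary. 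The identification of the boundary action is then a computation rather than a gluing: the relative dualizing sheaf on the affine bubbles is identified with the dualizing sheaf via trivialization of $T^\dual_0\P$, and since $\C^\times$ acts with weight one on $T^\dual_0\P$ it acts by scalar multiplication on the scaling data of the $\ovl{\M}_{|I_j|,1}^G(\bA,X)$ factors --- the same weight-one conclusion you reach.
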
 

\begin{proof}  The
$\C^\times$-action on $\P$ induces an action on stable maps to $\P
  \times X$, by composition, and on the relative dualizing sheaf.
  Hence $\C^\times$ acts on $\ovl{\MM}_{n,1}^G(\P,X)$.  The stability
  condition is unchanged, since the Hilbert-Mumford weights for two
  objects related by the $\C^\times$-action are in one-to-one
  correspondence.  This implies that $\C^\times$ acts on the
  semistable locus $\ovl{\M}_{n,1}^G(\P,X)$.  For the substack
  $\ovl{\M}_n^G(\P,X)_\infty$ consisting of fiber products of
  $\ovl{\M}_{i_j,1}^G(\bA,X)$ and $\ovl{\M}_r^G(\P, X \qu G)$, the
  action is given by pull-back of sections under the action by
  rotation on $\P$.  The relative dualizing sheaf is identified with
  the dualizing sheaf by trivialization of the cotangent line
  $T^\dual_0 \P$.  Since $\C^\times$ acts with weight one on
  $T^\dual_0 \P$, after trivialization of the cotangent line it acts
  by scalar multiplication on the sections of the projectivized
  dualizing sheaf in the objects of $\ovl{\M}_{i_j,1}^G(\bA,X)$, as
  claimed.
\end{proof}  

The fixed point set for the action factorizes as follows.  Let
$F_{n,1}^G(d) \subset \ovl{\M}_{n,1}^G(\P,X,d)$ denote the fixed point
set of class $d \in H_2^G(X)$.  We wish to express $F_{n,1}^G(d)$ as a
fiber product of ``contributions from $0,\infty$''.  Let
$\ovl{\M}^G_{n_\pm,1}(\P(1,k), X,d)^{\C^\times}$ denote the
$\C^\times$-fixed locus in $\ovl{\M}^G_{n_\pm,1}(\P(1,k),X)$.  Let
$$ F^G_{n_\pm,1}(d) \subset \ovl{\M}^G_{n_\pm,1}(\P(1,k),
X,d)^{\C^\times}$$
denote the locus of bundles with sections that are constant in some
trivialization in a neighborhood of $B\Z_k$ and $n_\pm$ markings map
to $0 \in \P(1,k)$ under the projection $\P(1,k) \times X \to
\P(1,k)$.  Both conditions are preserved under limits.  It follows
that $F^G_{n_\pm,1}(d)$ is a proper Deligne-Mumford stack.  It admits
a relative perfect obstruction theory over
$\ovl{\M}_{n_\pm,1}(\P(1,k))$ induced from the relative perfect
obstruction theory on $\ovl{\M}^G_{n_\pm,1}(\P(1,k), X,d)^{\C^\times}$.
Consider the evaluation map $F^G_{n_\pm,1}(d) \to \ovl{I}_{X \qu G}$ at
$B\Z_k \subset \P(1,k)$.  For any $g \in G$ of finite order $k$ denote
by $F_{n_\pm,1}^G(d,[g]) \subset F_{n_\pm,1}^G(d)$ the locus of
bundles-with-sections whose sections take values in the twisted sector
corresponding to $[g] \in G/\Ad(G)$ at $B\Z_k$.  Consider the map
$$ \pi_\pm: F^G_{n_\pm,1}(d_\pm,[g]) \to \M_{0,1}(\P(1,k)) \cong \P
.$$
The fiber $\pi^{-1}_\pm(\rho)$ of $F_{n_\pm,1}^G(d,[g])$ over a
non-zero scaling $\rho$ is by Section \ref{loc1}
\begin{equation} \label{nonzero}
 \pi^{-1}(\rho) = \bigcup_{d_- + \phi_- = d,
   \widetilde{\phi}_-(\theta) = g}
 F_{n_\pm}^G(d_-,\phi_-) \end{equation}
where $\theta$ is a $k$-th root of unity.  The locus
$\pi^{-1}_\pm(\infty)$ in $F_{n_\pm,1}^G(d,[g])$ with infinite scaling
is by Section \ref{loc2} a union of fiber products
\begin{equation}  \label{inf} \left( \prod_{j=1}^r \ovl{\M}_{i_j,1}^G(\bA,X) \right) \times_{(\ovl{I}_{X
    \qu G})^r} \times \ovl{\M}_{0,r + 1}(X \qu G,d) \times_{\ovl{I}_{X \qu G}}
  (X^g \qu Z_g) .\end{equation}
The identification is given by the map which attaches affine gauged
maps to $0 \in \P(1,k)$.  From the descriptions in Sections
\ref{loc1}, \ref{loc2} we obtain
$$ F_{n,1}^G(d) = \bigcup_{d_- + d_+ = d, n_- + n_+= n} \bigcup_{[g]
  \in G/\Ad(G)} F_{n_-,1}^G(d_+,[g]) \times_{\ovl{I}_{X \qu G} \times \P}
F_{n_+,1}^G(d_-,[g]) $$
where the map to $\P$ is given by the scaling.  We can now complete
the proof of the localized version of the adiabatic limit theorem,
\cite[Theorem 1.6]{qk1}.

\begin{proof} (of \cite[Theorem 1.6]{qk1})
The divisor class relation $[\pi^{-1}(0)] = [\pi^{-1}(\infty)] $
implies that the integrals over \eqref{nonzero}, \eqref{inf} are
equal.  Hence for any $\alpha_\infty \in H(X^g \qu Z_g) \subset H(I_{X
  \qu G})$, we have
\begin{multline}
 \sum_{[I_1,\ldots,I_r]} \int_{[I_{X \qu G}]} (\tau_{X \qu
   G,-}^{r} (\kappa_X^{G,|I_1|} (\alpha_{I_1},1), \ldots,
 \kappa_X^{G,|I_r|} (\alpha_{I_r},1),1)\cup \alpha_\infty) \\=
 \int_{[I_{X \qu G}]} \tau_{X,-}^{G,n}(\alpha_1,\ldots,\alpha_n,1)
 \cup \alpha_\infty. \end{multline}
Summing over $n$ with $\alpha_1 = \ldots \alpha_n = \alpha$ gives
$$ \int_{[I_{X \qu G}]} (\tau_{X \qu G,-} \circ \kappa_X^G)(\alpha)
\cup \alpha_\infty = \int_{[I_{X \qu G}]} \tau_{X,-}^G(\alpha) \cup
\alpha_\infty .$$
Since this holds for any $g$ and any $\alpha_\infty$, it follows
\cite[Theorem 1.6]{qk1}
$$(\tau_{X \qu G,-} \circ \kappa_X^G)(\alpha) = \tau_{X,-}^G(\alpha)
\in QH_{S^1}(X \qu G) .$$
\end{proof} 

The localized adiabatic limit \cite[Theorem 1.6]{qk1} allows us to
deduce relations in the quantum cohomology algebras, although these
relations are rather non-explicit unless $\kappa_X^G$ is known.
Namely, recall that any differential operator annihilating the
localized graph potential $\tau_{X \qu G,-}$ defines relations, see
for example \cite{gi:eq}, \cite{iri:qdm}.  Let $V \subset QH_G(X)$
be a submanifold such that the restriction of $\kappa_X^G$ to $V$ is
an embedding.  In particular, any differential operator on $V$ pushes
forward to a differential operator on $(\kappa_X^G)_*(V)$.  We
especially have in mind the case that $V \cong QH_G^2(X)$ and is
isomorphic to $QH^2(X \qu G)$.

\begin{corollary} {\rm (Relations on quantum cohomology algebras)} 
\label{relations}  Suppose that $\Box$ is a differential operator 
on $V$.
\begin{enumerate} 
\item {\rm (Annihilation at a point)} If $\Box$ annihilates the
  restriction of $ \tau_{X,-}^G$ at $\alpha \in \widetilde{QH_{S^1}(X \qu
    G)}$, then $(\kappa_X^G)_* \Box$ annihilates $\tau_{X \qu G,-}$ at
  $\kappa_X^G(\alpha)$ and the symbol $\Sigma( (\kappa_X^G)_* \Box)$
  of $(\kappa_X^G)_* \Box$ satisfies $\Sigma( (\kappa_X^G)_* \Box)
  \star_{\kappa_X^G(\alpha)} \tau_{X \qu G,-}(\alpha) = 0$.
\item {\rm (Annihilation on small quantum cohomology)} If $V =
  QH^2_G(X)$ and the quantum cohomology $T_{\kappa_X^G(\alpha)} QH(X
  \qu G)$ is generated by $T_{\kappa_X^G(\alpha)} QH^2(X \qu G)$, then
  $\Sigma( (\kappa_X^G)_* \Box)(\alpha)$ is a relation in
  $T_{\kappa_X^G(\alpha)} QH_{S^1}(X \qu G)$.
\end{enumerate} 
\end{corollary}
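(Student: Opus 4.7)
The plan is to push the annihilation property through the localized adiabatic limit and then convert the resulting differential condition into an algebraic one using the quantum differential equation.

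For part (a), I would first invoke the localized adiabatic limit \cite[Theorem 1.6]{qk1} just proved, which gives $\tau_{X,-}^G(\alpha) = (\tau_{X \qu G,-} \circ \kappa_X^G)(\alpha)$ for $\alpha \in V$. Since $\kappa_X^G|_V$ is an embedding by hypothesis, the chain rule (applied in a neighborhood of $\alpha$) rewrites the assumption $\Box\, \tau_{X,-}^G|_\alpha = 0$ as $((\kappa_X^G)_* \Box)\, \tau_{X \qu G,-}|_{\kappa_X^G(\alpha)} = 0$, which is the first half of (a). For the symbol statement, the key input is the quantum differential equation $\zeta \partial_i \tau_{X \qu G,-} = e_i \star_{\kappa_X^G(\alpha)} \tau_{X \qu G,-}$ for a basis $\{e_i\}$ of $T_{\kappa_X^G(\alpha)} QH(X \qu G)$ with dual coordinates, which follows from the topological recursion relations as noted at the end of Section \ref{loc1} and in Pandharipande \cite{pa:ra}. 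Writing $(\kappa_X^G)_* \Box = \sum_I a_I \partial_I$ of total order $d$ and iterating the QDE, one gets $\zeta^{|I|} \partial_I \tau = e_{i_1} \star \cdots \star e_{i_{|I|}} \star \tau + O(\zeta^{-1})$, so that $\zeta^d ((\kappa_X^G)_* \Box) \tau = \Sigma((\kappa_X^G)_* \Box) \star_{\kappa_X^G(\alpha)} \tau_{X \qu G,-}(\alpha) + O(\zeta^{-1})$. The vanishing of the left-hand side then forces $\Sigma((\kappa_X^G)_* \Box) \star_{\kappa_X^G(\alpha)} \tau_{X \qu G,-}(\alpha) = 0$ as claimed.

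For part (b), since $\tau_{X \qu G,-}(\alpha) = 1 + O(\zeta^{-1})$ and the $\star$-product preserves the $\zeta$-filtration, reading off the $\zeta^0$ coefficient of the symbol relation from (a) yields that $\Sigma((\kappa_X^G)_* \Box)(\alpha)$ annihilates the identity element under quantum multiplication, and in particular equals zero in $T_{\kappa_X^G(\alpha)} QH(X \qu G)$. Under the assumption that $T_{\kappa_X^G(\alpha)} QH^2(X \qu G)$ generates $T_{\kappa_X^G(\alpha)} QH(X \qu G)$ as a quantum-product algebra, the image of $V = QH^2_G(X)$ under $\kappa_X^G$ contains a set of algebra generators, so that $\Sigma((\kappa_X^G)_* \Box)(\alpha)$, which by construction is a polynomial in these generators, gives a nontrivial relation in the quantum product ring.

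The main obstacle I anticipate is the $\zeta$-bookkeeping in the iterated application of the QDE, specifically verifying that the cross-terms in which derivatives hit the quantum structure constants contribute only at lower order in $\zeta^{-1}$, so that the leading coefficient really reads off as the symbol multiplied via $\star$. One must also check that the push-forward $(\kappa_X^G)_* \Box$, which is a priori defined only on the submanifold $(\kappa_X^G)_*(V)$, can be evaluated on $\tau_{X \qu G,-}$ using partials compatible with the QDE on the ambient space $QH(X \qu G)$; under the degree-two hypothesis in (b) this is immediate because the tangent directions to $(\kappa_X^G)_*(V)$ at $\kappa_X^G(\alpha)$ are identified with $QH^2(X \qu G) \subset T_{\kappa_X^G(\alpha)} QH(X \qu G)$.
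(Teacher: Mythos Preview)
Your proposal is essentially correct and follows the same route as the paper: invoke \cite[Theorem 1.6]{qk1} to transfer the annihilation property to $\tau_{X\qu G,-}$, then use that $\tau_{X\qu G,-}$ solves the quantum differential equation to convert the top-order part of the differential operator into a quantum multiplication. Your explicit $\zeta$-filtration bookkeeping (iterating the QDE and tracking that derivatives of structure constants land at strictly lower order in $\zeta^{-1}$) is exactly the content the paper compresses into the phrase ``differential operators transform into quantum multiplications,'' with a pointer to \cite{gi:eq} and \cite[Theorem 2.4]{iri:qdm}.

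For part (b) there is a small divergence worth noting. You extract the relation by reading off the $\zeta^0$ coefficient of $\Sigma\star\tau_{X\qu G,-}$ and using $\tau_{X\qu G,-}=1+O(\zeta^{-1})$; this is clean, but it relies on a normalization of $\tau_{X\qu G,-}$ that the paper does not state in quite that form (the ``additional term'' is only spelled out for $n=1$). The paper's own argument instead uses the generation hypothesis in the way Givental and Iritani do: the components (equivalently, the degree-two derivatives) of $\tau_{X\qu G,-}$ span the quantum cohomology, so a class whose $\star$-multiplication kills all of them must vanish. Your final paragraph about the generation hypothesis is then somewhat redundant with your leading-term argument; if you want to align with the paper, replace the $1+O(\zeta^{-1})$ step by the observation that $\Sigma\star(\cdot)$ annihilates every component of $\tau_{X\qu G,-}$ and hence, under $QH^2$-generation, all of $T_{\kappa_X^G(\alpha)}QH(X\qu G)$.
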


\begin{proof}  Suppose $\Box$ is a differential operator as in the Corollary
part (a).  That $(\kappa_X^G)_* \Box$ annihilates $\tau_{X \qu G,-}$
follows from \cite[Theorem 1.6]{qk1}.  The relation on the principal
symbol follows from the fact that $\tau_{X \qu G,-}$ is a fundamental
solution to the quantum differential equation \cite[(4)]{qk1}, and so
differential operators transform into quantum multiplications at
$\kappa_X^G(\alpha)$. That the principal symbol defines relations
follows as in \cite{gi:eq}, \cite[Theorem 2.4]{iri:qdm}, using that
the components of $\tau_{X \qu G,-}$ generate the quantum cohomology
resp. derivatives of $\tau_{X \qu G,-}$ in the directions $ (D
\kappa_X^G)(\alpha), \alpha \in QH^2_G(X)$ under the
$QH^2_G(X)$-generation hypothesis.
\end{proof} 

%\begin{remark}  
%\begin{enumerate} 
%\item {\rm (Alternative proof without the localized adiabatic limit
%  theorem)} An alternative proof of items (b), (c) is given as
%  follows: because $\Box_d$ annihilates ${\mathcal I}_X^G$
%  (resp. restricted to $QH_G^2(X)$), it annihilates the Hessian of the
%  gauged graph potential $\tau_X^G$, hence by the adiabatic limit
%  theorem \cite[1.5]{qk1}, , $(\kappa_X^G)_* \Box_d$ annihilates the
%  Hessian of the graph potential $\tau_{X \qu G}$ (resp. on the image
%  of $QH_G^2(X)$).  As explained in Givental \cite{gi:eq}, the Hessian
%  gives a fundamental solution to the quantum differential equation
%  and so the corresponding product vanishes in $T_{\kappa_X^G(0)} QH(X
%  \qu G)$ (resp. assuming generation by second cohomology.)
%%
%\item {\rm (The curvature is not necessarily small)} The point
%  $\kappa_X^G(0) \in QH(X \qu G)$ does not necessarily lie in
%  $H^2_G(X) \otimes \Lambda_X^G$ unless $X \qu G$ is semipositive, so
%  the above proposition gives relations not necessarily in the small
%  quantum cohomology of $X \qu G$.
%%
%\end{enumerate}
%\end{remark} 

In the remainder of the section we discuss the toric case, that is,
$X$ is a complex vector space and $G$ is a torus acting on $X$ so that
$X \qu G$ is a smooth proper Deligne-Mumford stack.  In this case, the
identity in \cite[Theorem 1.6]{qk1} seems to be essentially the same
as the ``mirror theorems'' in \cite{gi:eq}, \cite{lly:mp1},
\cite{iri:gmt}, \cite{coates:mirrorstacks},
\cite{ciocan:genuszerowall}.  Regarding relations in the quantum
cohomology of toric varieties, the following is introduced in Batyrev
\cite{bat:qcr}.

\begin{definition}  
The {\em quantum Stanley-Reisner ideal} is the ideal $ QSR_X^G \subset
T_0 QH_G(X), \alpha \in QH^2_G(X) $
generated by the elements
$$ \prod_{\mu_j(d) > 0} r(v_j)^{\mu_j(d)} - q^d \prod_{\mu_j(d)
  < 0} r(v_j)^{\mu_j(d)} \in T_0 QH_G(X) $$
for $d \in H^G_2(X,\Z) \cong \g_\Z $.  Similarly the {\em equivariant
  quantum Stanley-Reisner ideal} is the ideal $ {QSR}_X^{\ti{G},G}
\subset T_0 QH_{\ti{G}}(X)$ generated by the elements
$$ \prod_{\mu_j(d) > 0} v_j^{\mu_j(d)} - q^d \prod_{\mu_j(d) < 0}
v_j^{\mu_j(d)} \in T_0 QH_{\ti{G}}(X) ,$$
that is, without restriction to $\g \subset \ti{\g}$.
\end{definition}  

\begin{example} 
\begin{enumerate}
\item {\rm (Projective spaces)} For the usual action of $G = \C^\times$ on
  $X = \C^k$, we have $H^2_G(X,\Z) = \Z$ with positive generator $d =
  1$ and $r(v_j) = u$ the coordinate on $\g$ for $j =1,\ldots, k$.
  The quantum Stanley-Reisner ideal $QSR_X^G$ has generator
$$  \prod_{j=1}^k r(v_j) - q =  u^k - q $$
while the equivariant Stanley-Reisner ideal $QSR_X^{\ti{G},G}$ has
generator $ \prod_{j=1}^k v_j - q .$
\item {\rm (Weighted projective line)} Suppose that $G =\C^\times$ acts on
  $X = \C^2$ with weights $\mu_1 = 2, \mu_2 = 3$.  Then $H_2^G(X) \cong \Z$ with
  generator $d = 1$.  We have $r(v_1) = 2u$ while $r(v_2) =3u$.  The
  quantum Stanley-Reisner ideal $QSR_X^G$ has generator
$$ \prod_{j=1}^2 r(v_j)^{\mu_j(1)} - q = (2u)^2 (3u)^3 - q $$
while the equivariant Stanley-Reisner ideal
$QSR_X^{\ti{G},G}$ has generator
$$  \prod_{j=1}^2 v_j^{\mu_j(1)} - q = v_1^2 v_2^3 - q .$$
\end{enumerate} 
\end{example} 

Recall the definition of $\kappa_X^{\ti{G},G}$ from Remark
\ref{eqqkirwan}.  

\begin{theorem} \label{QSR} 
The kernel of $D_0 \kappa_X^{G,1}: T_0 QH_G(X) \to
T_{\kappa_X^G(0)} QH_{S^1}(X \qu G)$ resp.  $D_0
\kappa_X^{\ti{G},G}: T_0 QH_G(X) \to
T_{\kappa_X^{\ti{G},G}(0)} QH_T(X \qu G)$ contains $QSR_X^G$
resp. $QSR_X^{\ti{G},G}$.
\end{theorem}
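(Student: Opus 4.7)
The plan is to invoke Corollary~\ref{relations}(b) for a family of hypergeometric ``Picard--Fuchs'' differential operators $\Box_d$, indexed by $d \in H_2^G(X,\Z)$, whose principal symbols are exactly the generators of $QSR_X^G$ (respectively $QSR_X^{\ti G,G}$). The operators are read off from the explicit hypergeometric form of the localized gauged graph potential $\tau_{X,-}^G$ given in Example~\ref{Ifunction}.

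First, on $V = QH^2_{\ti G}(X)$ I define
\[
\Box_d := \prod_{\mu_j(d) > 0} \prod_{m = 0}^{\mu_j(d)-1}\bigl(\zeta \partial_{v_j} - m\zeta\bigr) \;-\; q^d \prod_{\mu_j(d) < 0} \prod_{m = 0}^{-\mu_j(d)-1}\bigl(\zeta \partial_{v_j} - m\zeta\bigr),
\]
where $\partial_{v_j}$ is the partial derivative in the $v_j$-direction on $V$; in the non-equivariant case I restrict along $\g \hookrightarrow \ti\g$ to obtain an operator on $QH^2_G(X)$. The key observation is that $\zeta\partial_{v_j}$ acts on the $q^{d'}$-summand of $\tau_{X,-}^G$ by multiplication by $\Psi^{d'}(v_j) = D_j + \mu_j(d')\zeta$ (using Example~\ref{vspaces}; the shift $\mu_j(d')\zeta$ arises from differentiating the exponential $\exp(\Psi^{d'}(\alpha)/\zeta)$). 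Setting $A_{d''} := \prod_j \prod_{m = -\infty}^{\mu_j(d'')}(D_j + m\zeta)$, the first product in $\Box_d$ applied to the $q^{d'}$-summand multiplies it by $\prod_{\mu_j(d)>0}\prod_{l=1}^{\mu_j(d)}(D_j + (\mu_j(d')-\mu_j(d)+l)\zeta)$, which is precisely the positive-weight part of the ratio $A_{d'}/A_{d'-d}$; the $q^d$-shifted second product similarly supplies the negative-weight inverse part, and the two sums cancel after relabelling $d' \mapsto d'+d$. This verifies $\Box_d \tau_{X,-}^G = 0$.

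Extracting the principal symbol by identifying $\zeta\partial_{v_j}$ with $v_j$ and setting $\zeta = 0$, I obtain
\[
\Sigma(\Box_d) = \prod_{\mu_j(d) > 0} v_j^{\mu_j(d)} - q^d \prod_{\mu_j(d) < 0} v_j^{-\mu_j(d)} \in T_0 QH_{\ti G}(X),
\]
which is exactly the generator of $QSR_X^{\ti G, G}$; restricting along $r$ yields the generator of $QSR_X^G$. Now I invoke Corollary~\ref{relations}(b): the quantum cohomology $T_{\kappa_X^G(0)} QH(X \qu G)$ of a smooth proper toric Deligne--Mumford quotient is generated in degree $2$ by the divisor classes $D_j$, so the degree-$2$ generation hypothesis holds at $\alpha = 0$, and the Corollary gives that $\Sigma((\kappa_X^{G})_* \Box_d)(0)$ is a relation in $T_{\kappa_X^G(0)} QH(X\qu G)$, i.e.\ vanishes there. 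By the chain rule, this symbol equals $D_0 \kappa_X^{G,1}(\Sigma(\Box_d))$, so $\Sigma(\Box_d) \in \ker D_0 \kappa_X^{G,1}$. Letting $d$ vary over $H_2^G(X,\Z)$ gives the claimed inclusion $QSR_X^G \subset \ker D_0 \kappa_X^{G,1}$, and the equivariant case is identical, with $\kappa_X^{\ti G, G, 1}$ replacing $\kappa_X^{G,1}$ and no restriction along $r$.

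The main obstacle will be the verification of $\Box_d \tau_{X,-}^G = 0$. This is essentially the standard shift-invariance of a hypergeometric series and is mechanical, but requires careful bookkeeping of the doubly-infinite products $\prod_{m \leq \mu_j(d')}(D_j + m\zeta)$ and of the ranges of $m$ in $\Box_d$; a secondary technicality is tracking how the exponential factor $\exp(\Psi^d(\alpha)/\zeta)$ transforms under successive $\partial_{v_j}$-differentiations (it commutes with $\partial_{v_j}$ up to the shift $D_j + \mu_j(d')\zeta$ brought down) so that the $\alpha$-dependence is handled uniformly in the telescoping.
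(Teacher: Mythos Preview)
Your proposal follows essentially the same route as the paper: construct Picard--Fuchs/GKZ-type differential operators $\Box_d$ that annihilate the explicit hypergeometric expression for $\tau_{X,-}^G$ from Example~\ref{Ifunction}, then feed these into Corollary~\ref{relations} and read off the QSR generators as the principal symbols. The paper's version is terser: it writes the operators without the $m\zeta$ shifts (your shifted operators and the paper's unshifted ones have the same principal symbol, so this is cosmetic), and it outsources the annihilation check to Iritani and Cox--Katz rather than verifying the telescoping by hand.

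One point to watch: you invoke Corollary~\ref{relations}(b), which requires that $T_{\kappa_X^G(0)} QH(X \qu G)$ be generated in degree~$2$. For a toric \emph{orbifold} this is not an obvious classical fact, because the orbifold cohomology $H(\ovl I_{X\qu G})$ has twisted-sector classes that are not a priori polynomials in the $D_j$; quantum degree-$2$ generation is essentially equivalent to surjectivity of $\kappa_X^{G,1}$, which the paper flags as a separate (then-conjectural) result in the remark immediately following Theorem~\ref{QSR}. The paper's own proof sidesteps this by appealing to Corollary~\ref{relations} without isolating part~(b); the cleaner justification is via part~(a): $\tau_{X\qu G,-}(0)$ is a unit in $QH(X\qu G)[[\zeta^{-1}]]$ (its $\zeta^0$-term is the identity), so $\Sigma\star\tau_{X\qu G,-}(0)=0$ already forces $\Sigma=0$ without any generation hypothesis. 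Swapping in that argument removes the dependence on degree-$2$ generation and your proof goes through.
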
 

\begin{proof}   For $d \in H_2^G(X,\Z) \cong H(BG,\Z) \cong \g_\Z^\dual$ let $\Box_d$ denote the differential operator on $QH_2^G(X,\R)$ corresponding to $d$, 
$$ \Box_d = \prod_{\mu_j(d) \ge 0} \partial_j^{\mu_j(d)} - q^d
  \prod_{\mu_j(d) \le 0} \partial_j^{-\mu_j(d)} .$$
We may identify the coordinates on $QH_2^G(X,\R)$ with the quantum
parameters, using the divisor equation.  
Then the operator $\Box_d$
annihilates the function of Example \ref{Ifunction}, see for example
Iritani \cite{iri:gmt}, Cox-Katz \cite[(11.92)]{ck:ms}.  It follows
from Corollary \ref{relations} that the corresponding product of the
tangent vectors maps to zero in $T_{\kappa_X^G(0)} QH_{S^1}(X \qu G)$,
and so lies in the kernel of $D_0 \kappa_X^G$.
 \end{proof} 

\begin{remark}  {\rm (Isomorphism with the quantum Stanley-Reisner ring)} 
In joint work with Gonz\'alez \cite{gw:surject}, we show that $\kappa_X^{G,1}$ is
surjective and $QSR_X^G$ is exactly its kernel, after passing to a
suitable formal version of $QH_G(X)$, so that $T_{\kappa_X^G(0)} QH(X
\qu G)$ is canonically isomorphic to the quantum Stanley-Reisner ring.
Related computations can be found in McDuff-Tolman \cite{mct:top} and
Iritani \cite{iritani:conv}.  
\end{remark}

\def\cprime{$'$} \def\cprime{$'$} \def\cprime{$'$} \def\cprime{$'$}
  \def\cprime{$'$} \def\cprime{$'$}
  \def\polhk#1{\setbox0=\hbox{#1}{\ooalign{\hidewidth
  \lower1.5ex\hbox{`}\hidewidth\crcr\unhbox0}}} \def\cprime{$'$}
  \def\cprime{$'$} \def\cprime{$'$} \def\cprime{$'$}

\end{document}